\documentclass[11pt]{amsart}

\usepackage[dvipsnames]{xcolor}
\usepackage{amssymb,mathtools}
\usepackage[letterpaper,margin=1in]{geometry}
\usepackage[citecolor=PineGreen,colorlinks=true,linkcolor=RoyalBlue,urlcolor=BrickRed]{hyperref}
\usepackage{graphicx}
\usepackage{tikz}
\usetikzlibrary{arrows.meta,decorations.pathreplacing}

\theoremstyle{plain}
\newtheorem{theorem}{Theorem}[section]
\newtheorem{proposition}[theorem]{Proposition}
\newtheorem{definition}[theorem]{Definition}
\newtheorem{lemma}[theorem]{Lemma}

\theoremstyle{definition}
\newtheorem*{assumptionU}{Assumption \textup{(U)}}
\theoremstyle{remark}
\newtheorem{remark}[theorem]{Remark}

\newcommand{\E}{\mathbb{E}}
\newcommand{\Zd}{\mathbb{Z}^d}
\newcommand{\Rd}{\mathbb{R}^d}
\newcommand{\R}{\mathbb{R}}
\newcommand{\Z}{\mathbb{Z}}
\renewcommand{\a}{\mathbf{a}}
\newcommand{\cu}{\square}
\renewcommand{\O}{\mathcal{O}}
\newcommand{\eps}{\varepsilon}
\renewcommand{\P}{\mathbb{P}}
\newcommand{\Var}{\text{Var}}

\title[Homogenization for the critical long-range random conductance model]{Quantitative homogenization for the critical long-range random conductance model}

\author{Ahmed Bou-Rabee}
\address{Department of Mathematics, University of Pennsylvania, Philadelphia, PA 19104}
\email{ahmedmb@sas.upenn.edu}

\author{Paul Dario}
\address{Laboratoire AGM, CY Cergy Paris Universit\'e, 95302 Cergy-Pontoise, France}
\email{paul.dario@cyu.fr}

\begin{document}
	
	\begin{abstract}
		We consider the long-range random conductance model on $\mathbb{Z}^d$ at the critical exponent: the jump rate between sites $x$ and $y$ decays as $\a(x,y) |x-y|^{-(d+2)}$, where~$\a(x,y)$ are i.i.d.\ uniformly elliptic conductances.
		Below the critical exponent~$(d+2)$ the walk converges to a stable process; above it, to Brownian motion with diffusive $\sqrt{t}$ scaling.
		At criticality the second moment of the jump kernel diverges logarithmically.
		We establish quantitative homogenization of the associated elliptic equation to the Laplacian at the rate $1/\sqrt{|\ln\varepsilon|}$.
		As a consequence, we deduce quenched convergence of the random walk to Brownian motion under the anomalous~$\sqrt{t \log t}$ scaling.
		Unlike in standard homogenization, the effective diffusivity is determined by the mean conductance alone, with no corrector contribution at leading order.
	\end{abstract}

	\maketitle
	
	\section{Introduction}
	
	\subsection{The critical long-range random conductance model}
	
	Consider a random walk on $\Zd$ whose jump rate from $x$ to $y$ is
	\begin{equation}
		\label{e.jump.rates}
		c(x,y) = \a(x,y) \cdot |x - y|^{-(d+\alpha)}\,,
	\end{equation}
	where $\alpha > 0$ and the conductances $\{\a(x,y)\}$, indexed by unordered edges $\{x,y\}$, are i.i.d.\ and satisfy $\lambda \le \a(x,y) \le \lambda^{-1}$ almost surely for a fixed $\lambda \in (0,1]$.
	The exponent $\alpha$ controls the large-scale behavior.
	For $\alpha < 2$, the rescaled walk converges to an $\alpha$-stable process~\cite{ChenKumagaiWang2021,ChenChenKumagaiWang2025}.
	For $\alpha > 2$, it converges to Brownian motion~\cite{BiskupChenKumagaiWang2021}.
	At the critical value $\alpha = 2$, the second moment $\sum_{z \in \Zd \setminus \{0\}} |z|^{-d}$ diverges logarithmically.
	This paper considers the critical case: we prove quantitative homogenization of the associated elliptic equation at the rate $1/\sqrt{|\ln\eps|}$ and deduce a quenched invariance principle.
	The effective diffusivity of the homogenized elliptic equation is $\E[\a(0,e_1)]/(2d)$, with no corrector contribution at leading order.
	
	\begin{figure}[ht]
		\centering
		\includegraphics[width=\textwidth]{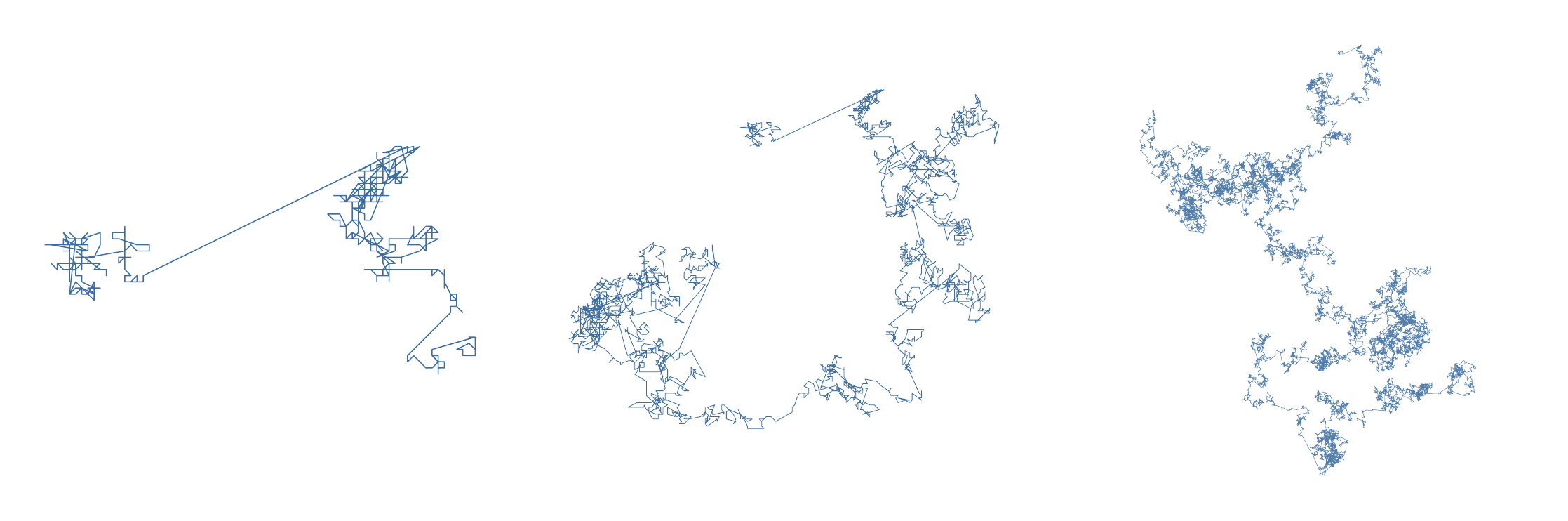}
		\caption{A sample path of the critical long-range random walk on $\Z^2$ with jump kernel $|z|^{-4}$ and random conductances, shown at three successive scales: $500$ steps (left), $5{,}000$ steps (center), and $100{,}000$ steps (right). By Theorem~\ref{t.walk.intro}, the walk converges to Brownian motion under $\sqrt{t \log t}$ scaling.}
		\label{fig:qip}
	\end{figure}
	
	\subsection{Main results}
	
	For a lattice spacing $\eps > 0$ and a function $h \colon \eps\Zd \to \R$, define the rescaled nonlocal operator
	\begin{equation}
		\label{e.Leps}
		\mathcal{L}^\eps h(x) \coloneqq \frac{\eps^d}{\kappa_\eps}
		\sum_{z \in \eps\Zd \setminus \{0\}}
		\a\left(\frac{x}{\eps}, \frac{x+z}{\eps}\right)
		\frac{h(x+z) - h(x)}{|z|^{d+2}}\,,
	\end{equation}
	where $\kappa_\eps \asymp \left| \ln\eps \right|$ normalizes the logarithmically divergent second moment (see~\eqref{e.kappaeps} for the precise definition).
	
	\begin{assumptionU}\label{a.U}
		Let $d \ge 1$ be an integer, let $\lambda \in (0,1]$, and let $\a = \{\a(x,y)\}_{x \ne y}$ be i.i.d.\ symmetric random conductances indexed by unordered edges, satisfying $\a(x,y) = \a(y,x)$ and $\lambda \le \a(x,y) \le \lambda^{-1}$.
	\end{assumptionU}
	
	\begin{theorem}[Quantitative homogenization]
		\label{main.thm}
		Under Assumption~\textup{(U)}, let $\sigma \in (0,1)$, let $\mu \ge 0$, let $U \subset \Rd$ be a bounded domain with $C^{2,\sigma}$ boundary, and let $f \in C^{0,\sigma}(\bar U)$.
		For $\eps \in (0,1)$, let $u^\eps_\mu \colon \eps\Zd \to \R$ and $\bar u_\mu \in H^1_0(U)$ be the solutions of
		\begin{equation}
			\label{e.main.equations}
			\left\{
			\begin{aligned}
				\mu u^\eps_\mu - \mathcal{L}^\eps u^\eps_\mu &= f && \text{in } U \cap \eps\Zd\,, \\
				u^\eps_\mu &= 0 && \text{on } \eps\Zd \setminus U\,,
			\end{aligned}
			\right.
			\qquad \text{and} \qquad
			\left\{
			\begin{aligned}
				\mu \bar u_\mu - \frac{\E[\a(0,e_1)]}{2d} \Delta \bar u_\mu &= f && \text{in } U\,, \\
				\bar u_\mu &= 0 && \text{on } \partial U\,.
			\end{aligned}
			\right.
		\end{equation}
		Then there exist constants $C < \infty$, $c > 0$ depending only on $\mu$, $d$, $\lambda$, $\sigma$, $U$, and $\|f\|_{C^{0,\sigma}}$, and a random variable $\mathcal{X}$ satisfying
		\begin{equation}
			\label{e.minimal.scale}
			\P\bigl[\mathcal{X} > s\bigr] \le C e^{-c s}
			\qquad \text{for every } s \ge 1\,,
		\end{equation}
		such that for every $\eps \in (0,1)$ with $\left| \ln\eps \right| \ge \mathcal{X}$,
		\begin{equation}
			\label{e.quenched.rate}
			\left\| u^\eps_\mu - \bar u_\mu \right\|_{L^2(U \cap \eps\Zd)} \le \frac{C}{\sqrt{\left| \ln\eps \right|}}\,.
		\end{equation}
		Here $\|g\|_{L^2(U \cap \eps\Zd)}^2 \coloneqq \eps^d \sum_{x \in U \cap \eps\Zd} g(x)^2$.
	\end{theorem}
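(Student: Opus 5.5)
The plan is a two-scale expansion without a corrector, using the smooth solution $\bar u_\mu$ itself as the test function, and then an energy (coercivity) estimate for the difference. Set $\bar{\a} \coloneqq \E[\a(0,e_1)]$. By Schauder theory on the $C^{2,\sigma}$ domain, $\bar u_\mu \in C^{2,\sigma}(\bar U)$. Extend it to a $C^{2,\sigma}$ function $\tilde u$ on $\Rd$ with compact support; near $\partial U$ we will need to handle the fact that $\bar u_\mu$ vanishes only on $\partial U$, not on $\eps\Zd\setminus U$. Put $w \coloneqq u^\eps_\mu - \tilde u$ restricted to $\eps\Zd$, and write
\begin{equation*}
\mu w - \mathcal{L}^\eps w = f - \mu\tilde u + \mathcal{L}^\eps \tilde u = \bigl(\mathcal{L}^\eps \tilde u - \tfrac{\bar\a}{2d}\Delta \tilde u\bigr) \quad \text{in } U\cap\eps\Zd .
\end{equation*}
Test this against $w$ (modified near the boundary) in the Dirichlet form $\mathcal{E}^\eps(w,w) = \frac{\eps^{2d}}{2\kappa_\eps}\sum_{x,z}\a\,\frac{(w(x+z)-w(x))^2}{|z|^{d+2}}$. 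The main task is to bound the right-hand side in the dual norm $(\mathcal{E}^\eps+\mu\|\cdot\|^2)^{-1/2}$ by $C|\ln\eps|^{-1/2}$. A nonlocal Poincaré inequality for $\mathcal{E}^\eps$ on bounded sets, which follows from ellipticity by comparing with the $\a\equiv1$ kernel, then converts this to the $L^2$ bound \eqref{e.quenched.rate}, even when $\mu=0$.

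For the consistency error $\mathcal{L}^\eps\tilde u - \frac{\bar\a}{2d}\Delta\tilde u$, split the jumps $z$ into three ranges.
\begin{itemize}
\item \emph{Long jumps}, $|z|\ge 1$. These contribute $O(\|\tilde u\|_\infty/\kappa_\eps)=O(1/|\ln\eps|)$.
\item \emph{Short jumps}, $|z|\le\eps^{1-\delta}$. Only $O(\delta|\ln\eps|)$ of the logarithmic mass lives here, so the Taylor error is harmless.
\item \emph{Intermediate jumps}. Taylor expand $\tilde u(x+z)-\tilde u(x)=\nabla\tilde u(x)\cdot z+\frac12 z\cdot\nabla^2\tilde u(x)z+O(|z|^{2+\sigma})$. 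The $O(|z|^{2+\sigma})$ term sums to $O(1/\kappa_\eps)$. Replacing $\a$ by $\bar\a$ in the quadratic term gives exactly $\frac{\bar\a}{2d}\Delta\tilde u$, up to lattice-sum errors absorbed by the definition of $\kappa_\eps$.
\end{itemize}
What remains are two random fluctuation terms:
\begin{itemize}
\item the first-order term $\frac{\eps^d}{\kappa_\eps}\sum_z (\a-\bar\a)\,\nabla\tilde u(x)\cdot z\,|z|^{-d-2}$, which is odd in $z$ in its deterministic part;
\item the second-order term with $(\a-\bar\a)$.
\end{itemize}
These are not small pointwise, since the first is of order $\eps^{-1}/\kappa_\eps$ times a fluctuation. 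They are small only after pairing with $w$.

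This pairing is the main obstacle. I would handle it by writing the fluctuation as a nonlocal divergence. Pair $\sum_x\sum_z(\a(x,x+z)-\bar\a)\,g(x,z)\,w(x)$ and symmetrize over the edge $\{x,x+z\}$. This gives $\frac12\sum (\a-\bar\a)\,g\,(w(x)-w(x+z))$, up to a symmetric remainder involving $\nabla\tilde u(x+z)-\nabla\tilde u(x)=O(|z|)$, which moves the term to second order. Cauchy–Schwarz against $\mathcal{E}^\eps(w,w)^{1/2}$ then reduces everything to bounding the random quantity
\begin{equation*}
\frac{\eps^{2d}}{\kappa_\eps}\sum_{x,z}\frac{(\a-\bar\a)^2 \text{ (or cross terms)}\, |z|^2}{|z|^{d+2}}.
\end{equation*}
The diagonal part of this is $O(1)$, which is not small. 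So I would instead subtract a scale-localized corrector-free averaging: split dyadic shells $|z|\sim 2^k\eps$ and use independence across edges. Then a variance computation (concentration for sums of independent bounded variables, by Hoeffding/Bernstein) shows that the weighted field $\sum_z(\a-\bar\a)z|z|^{-d-2}$, tested against the fixed smooth function $\nabla\tilde u$ and the $\mathcal{E}^\eps$-bounded $w$, has size $\kappa_\eps^{-1/2}$. Heuristically, each of the $\sim|\ln\eps|$ dyadic shells contributes an independent $O(1)$ fluctuation, normalized by $\kappa_\eps$.

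To make this rigorous uniformly over the random $w$, I would bound the dual norm of the random distribution via a multiscale sum of local averages. This is where the stretched-to-exponential tails come from: a union bound over dyadic scales produces a minimal scale $\mathcal{X}$ in $|\ln\eps|$ satisfying \eqref{e.minimal.scale}. Boundary layers of width $\eps^{1-\delta}$ near $\partial U$ are handled with a cutoff, using $\bar u_\mu=0$ on $\partial U$ and the Lipschitz bound. They cost $O(|\ln\eps|^{-1/2})$ after optimizing $\delta$.
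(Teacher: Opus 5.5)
Your reduction is sound. Apart from boundary effects, the only error term that is not small pointwise is the first-order random drift. The core of the theorem is then the bound
\[
F^\eps_p(x) \coloneqq \frac{\eps^d}{\kappa_\eps}\sum_{z\in\eps\Zd\setminus\{0\}}\Bigl(\a\bigl(\tfrac{x}{\eps},\tfrac{x+z}{\eps}\bigr)-\bar{\a}\Bigr)J(z)\,(p\cdot z),
\qquad
\|F^\eps_p\|_{H^{-1}_{\mathrm{crit}}(U^\eps)}\lesssim|p|\,\kappa_\eps^{-1/2},
\]
with good tails. The factor $\nabla\tilde u$ is harmless, since $\|b\psi\|_{H^1_{\mathrm{crit}}}\lesssim\|\psi\|_{H^1_{\mathrm{crit}}}$ by the Leibniz rule and Poincar\'e. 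But this is not a corrector-free statement. Because $\sum_z J(z)z=0$, one has $F^\eps_p=\mathcal{L}^\eps\ell_p=-\mathcal{L}^\eps\phi^\eps_p$ in $U^\eps$, hence $\|F^\eps_p\|^2_{H^{-1}_{\mathrm{crit}}(U^\eps)}\asymp\nu(U^\eps,p)$. The step you leave as a sketch is therefore exactly Proposition~\ref{p.dirichlet}, the main new estimate of the paper, and your proposal does not prove it.

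The tools you name do not deliver this bound.
\begin{itemize}
\item \textbf{Fixed test functions.} Hoeffding or Bernstein applied to $\langle F^\eps_p,h\rangle$ for a \emph{fixed} $h$ gives fluctuations of order $|p|\,\eps^{d/2}\kappa_\eps^{-1/2}\|h\|_{H^1_{\mathrm{crit}}}$, far smaller than claimed. This is irrelevant as it stands, because your $w$ is built from $u^\eps_\mu$ and depends on $\a$. You need the supremum over the unit ball of a space of dimension $\asymp\eps^{-d}$. The ``multiscale sum of local averages'' that would provide this is not specified, and a multiscale dual-norm inequality for the $\kappa_\eps$-normalized nonlocal energy would itself have to be proved.
\item \textbf{Dyadic shells.} The drift $F_k$ of shell $k$, paired by Cauchy--Schwarz against that shell's own energy, has dual norm $\lesssim|p|\kappa_\eps^{-1/2}$. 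Even using independence across shells, $\E\|\sum_kF_k\|^2=\sum_k\E\|F_k\|^2$ only returns $|\ln\eps|/\kappa_\eps\asymp1$, the same non-small bound you already found.
\end{itemize}
What is missing is a way to control long-range drift by the \emph{short-range} part of the energy.

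The paper does this with a divergence-free field $\mathbf{g}_p$ in which each excess long-range current $(\a-1)J(v)(p\cdot v)$ returns along a canonical nearest-neighbour path. By Lemma~\ref{l.path}, the shell $|v|\sim3^k\eps$ contributes only $O(|p|^2\eps^{-2d-2}3^{-k(d+1)})$ to the flux variance on a nearest-neighbour edge. The sum is therefore dominated by the shortest jumps, not by cancellation among the $|\ln\eps|$ shells. This gives $\E\,\nu\lesssim|p|^2/\kappa_\eps$, and the tails come from Hoeffding (edges with $|v|\ge1$) and McDiarmid (edges with $|v|<1$). Alternatively, your fixed-$h$ variance bound can be summed over an orthonormal basis of $\{h=0\text{ off }U^\eps\}$ with the $H^1_{\mathrm{crit}}$ inner product. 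The dimension $\asymp\eps^{-d}$ exactly offsets the factor $\eps^d$, giving $\E\|F^\eps_p\|^2_{H^{-1}_{\mathrm{crit}}}\le C|p|^2/\kappa_\eps$; a concentration argument for this norm is still needed afterwards.

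Two further steps are only gestured at.
\begin{itemize}
\item \textbf{Boundary.} Your $w=u^\eps_\mu-\tilde u$ does not vanish on $\eps\Zd\setminus U$, so neither the energy identity nor Proposition~\ref{p.poincare} applies to it. The paper instead uses the zero extension $\bar u^0_\mu$. It bounds the boundary defect in $H^{-1}_{\mathrm{crit}}$ by $C\kappa_\eps^{-1/2}$ using the identity $r(x{+}z)\psi(x)=(r(x{+}z)-r(x))(\psi(x)-\psi(x{+}z))$ and $|r(y)|\le C\operatorname{dist}(y,U)$.
\item \textbf{Uniformity in $\eps$.} $\mathcal{X}$ must serve a continuum of $\eps$, while $U^\eps$ and the sampled conductances change with $\eps$, and a union bound over dyadic scales does not cover this. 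The paper handles it with the blocks $I_m$, the $\eps$-independence of the microscopic corrector on $\eps\cu_{m+L}$, Lemma~\ref{l.corrector.monotonicity}, and an $\eps$-net with a H\"older modulus for the smooth fluctuation.
\end{itemize}
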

	The rate $\left|\ln\eps \right|^{-1/2}$ is the natural corrector scale: the squared corrector energy has matching upper and lower bounds in expectation (see Remark~\ref{rem:nu-lower-bound}), so the corrector scale is $\left|\ln\eps\right|^{-1/2}$. We do not prove a matching $L^2$ lower bound for the homogenization error in Theorem~\ref{main.thm}.
	
	We also note a deterministic scaling obstruction to any
	uniform $o(|\ln\eps|^{-1})$ $L^2$ rate over this class of Dirichlet problems.
	Set $\mu = 0$ and choose $f$ so that the homogenized solution $\bar u$ is not
	identically zero. Let $u^\eps, \bar u$ solve \eqref{e.main.equations} in $U$
	at
	scale $\eps$ with right-hand side $f$, and let $v^{\eps/2}, \bar v$ solve the
	corresponding problems in $U/2$ at scale $\eps/2$ with right-hand side
	$4 f(2\cdot)$, for the same realization of the conductances. A direct change
	of variables gives
	\begin{equation} \label{eq:scalingidentities}
		v^{\eps/2}(x/2) = \frac{\kappa_{\eps/2}}{\kappa_\eps} u^\eps(x),
		\qquad
		\bar v(x/2) = \bar u(x).
	\end{equation}
	Indeed,
	\[
	\mathcal L^{\eps/2}\bigl(A u^\eps(2\cdot)\bigr)(x/2)
	= 4A \frac{\kappa_\eps}{\kappa_{\eps/2}} \mathcal L^\eps u^\eps(x),
	\]
	so the right-hand side $4 f(2\cdot)$ forces
	$A = \kappa_{\eps/2}/\kappa_\eps$. Writing $w \in \Zd$ for $z/\eps$, the definition~\eqref{e.kappaeps} gives
	\[
	\kappa_{\eps/2} - \kappa_\eps = \sum_{\substack{w \in \Zd \\ 1/\eps < |w| \le 2/\eps}} |w|^{-d} = d V_d \ln 2 + o(1),
	\]
	and $\kappa_\eps = d V_d |\ln\eps| + O(1)$, so $\kappa_{\eps/2}/\kappa_\eps - 1 \asymp |\ln\eps|^{-1}$.
	The scaling identity~\eqref{eq:scalingidentities} combined with the change of variables $y = x/2$ yields
	\[
	\|v^{\eps/2} - \bar v\|_{L^2((U/2)^{\eps/2})} = 2^{-d/2} \left\| \frac{\kappa_{\eps/2}}{\kappa_\eps} u^\eps - \bar u \right\|_{L^2(U^\eps)}.
	\]
	Since $\bar u \not\equiv 0$, the two errors
	\[
	\|u^\eps - \bar u\|_{L^2(U^\eps)}
	\quad\text{and}\quad
	\|v^{\eps/2} - \bar v\|_{L^2((U/2)^{\eps/2})}
	\]
	cannot both be $o(|\ln\eps|^{-1})$: the logarithmic normalization creates a
	deterministic obstruction to any uniform $L^2$ rate better than
	$\left|\ln\eps\right|^{-1}$ across this rescaled class of problems.

	The minimal scale $\mathcal{X}$ is the threshold above which the corrector energy and the smooth-fluctuation error are within their deterministic bounds at every triadic scale; it has an exponential tail.
	
	Theorem~\ref{main.thm} controls the resolvent of the rescaled operator.
	Combined with heat-kernel bounds (Appendix~\ref{sec:heatkernel}) and an Euler approximation argument, it yields the following probabilistic consequence. In the following statement, we denote by $D([0,\infty); \Rd)$ the space of c\`adl\`ag functions defined on $[0,\infty)$, valued in $\Rd$ and equipped with the Skorokhod topology.
	
	\begin{theorem}[Quenched invariance principle]
		\label{t.walk.intro}
		Under Assumption~\textup{(U)}, let $g \in L^2(\Rd)$ be a nonnegative, compactly supported probability density.
		For each $\eps \in (0,1)$, let $X^\eps$ be the c\`adl\`ag process on $\eps\Zd$ with generator $\mathcal{L}^\eps$ and initial law $\P(X_0^\eps = x) = \eps^d g_\eps(x)$, where $g_\eps(x) \coloneqq \eps^{-d}\int_{x + [-\eps/2, \eps/2)^d} g(y) dy$.
		Let $W$ be a standard $d$-dimensional Brownian motion, let $X_0$ have law $g(x) dx$, independent of $W$, and set
		\[
		X_t \coloneqq X_0 + \sqrt{\frac{\E[\a(0,e_1)]}{d}} W_t\,.
		\]
		Then for $\P$-almost every realization of the conductances,
		\[
		X^\eps \Rightarrow X
		\qquad\text{in } D([0,\infty); \Rd)\,.
		\]
	\end{theorem}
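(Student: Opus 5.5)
We deduce Theorem~\ref{t.walk.intro} from Theorem~\ref{main.thm} in the standard way: first convergence of one-dimensional marginals through resolvents, then finite-dimensional distributions via the Markov property, then tightness from the heat-kernel bounds of Appendix~\ref{sec:heatkernel}. Fix $\mu>0$. Let $\bar R_\mu = (\mu - \tfrac{\E[\a(0,e_1)]}{2d}\Delta)^{-1}$ be the resolvent on $\Rd$; the generator of $X$ is $\tfrac{\E[\a(0,e_1)]}{2d}\Delta$, which matches the time normalization. Theorem~\ref{main.thm} is stated on bounded $C^{2,\sigma}$ domains with zero exterior data, so we apply it on large balls $B_R$. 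We then remove the Dirichlet condition using the exit-time estimates from the heat-kernel bounds: starting from the support of $g$, the walk leaves $B_R$ before an independent $\mathrm{Exp}(\mu)$ time with probability $o(1)$ as $R\to\infty$, uniformly in $\eps$. The quenched statement holds almost surely for all $\eps$ small, since $\mathcal{X}<\infty$ a.s. by \eqref{e.minimal.scale}; it therefore holds simultaneously for countably many choices of $(\mu,R,f)$, with $f$ ranging over a countable dense family of $C_c^{0,\sigma}$ functions.

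\emph{Step 1 (marginals).} For such $f$ we have
\[
\eps^d\sum_x g_\eps(x)\,(\mu-\mathcal{L}^\eps)^{-1}f(x) \to \int g\,\bar R_\mu f ,
\]
using \eqref{e.quenched.rate} together with $g\in L^2$ (Cauchy--Schwarz) and the localization to $B_R$. Hence the Laplace transforms in time of $t\mapsto \E[f(X^\eps_t)]$ converge. The heat-kernel bounds give equicontinuity in $t$, which lets us invert and obtain $\E[f(X^\eps_t)]\to \E[f(X_t)]$ for each $t$. To iterate this for finite-dimensional distributions we need convergence of $P^\eps_t f$ for initial data that are not of the form $g_\eps$. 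We handle this with an $L^2$-strong semigroup convergence: from the $L^2$ resolvent convergence, a Trotter--Kato type argument gives $\|P^\eps_t f - \bar P_t f\|_{L^2(\eps\Zd)}\to 0$. For multi-time marginals we write $\E[f_1(X^\eps_{t_1})f_2(X^\eps_{t_2})] = \langle g_\eps, P^\eps_{t_1}(f_1 P^\eps_{t_2-t_1}f_2)\rangle$ and replace each semigroup in turn, using $L^2$ contractivity and $L^\infty$ bounds on the $f_i$. The paper mentions an ``Euler approximation argument''; we would likely implement the Trotter--Kato step this way, approximating $P_t$ by $(I - \tfrac{t}{n}\mathcal{L})^{-n}$ uniformly in $\eps$. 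The uniformity in $\eps$ comes from heat-kernel regularity: an operator-norm bound on $\|\mathcal{L}^\eps P^\eps_s\|$ applied to smooth data, or a smoothing estimate.

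\emph{Step 2 (tightness).} We use Aldous' criterion: for stopping times $\tau$ and $\delta\to 0$ we need $\P(|X^\eps_{\tau+\delta}-X^\eps_\tau|>\eta)\to0$. By the strong Markov property it suffices to bound $\sup_x P^x(|X^\eps_\delta - x|>\eta)$. We split jumps at size $\eta$. Large jumps occur at rate $\lesssim \kappa_\eps^{-1}\eta^{-2}$, which vanishes. The small-jump part has quadratic variation per unit time $\lesssim \kappa_\eps^{-1}\sum_{\eps\le|z|\le\eta}|z|^{-d} \lesssim 1$, so Doob's inequality gives a bound $\lesssim \delta/\eta^2$. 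This is deterministic and uniform in $x$ by uniform ellipticity alone, so it holds for every realization. Compact containment follows in the same way.

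\emph{Main obstacle.} The hardest step is the passage from the $L^2$ resolvent estimate of Theorem~\ref{main.thm}, which involves a domain-dependent constant and a localized right-hand side, to semigroup convergence that is strong enough to iterate over times. The $L^2$ norm is averaged, while the Markov iteration needs control of $P^\eps_{t}$ acting on products. First we would establish $L^2(\eps\Zd)$ semigroup convergence from resolvent convergence on a dense set, via the $\eps$-uniform Euler approximation. We would then close using $L^\infty$ contractivity and the assumption that $g\in L^2$. Weighting by $g_\eps$ keeps every estimate in $L^2$, so pointwise convergence is never needed.
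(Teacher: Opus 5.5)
\textbf{Gap in Step~2 (tightness).} Your overall plan matches the paper's proof of Theorem~\ref{t.qip}: killed-resolvent convergence from Theorem~\ref{main.thm} on balls $B_R$, an Euler approximation giving $L^2$ semigroup convergence, a backward recursion for the finite-dimensional distributions, and Aldous' criterion. But Step~2 fails as written. The truncated process $\sum_{s \le t} \Delta X^\eps_s \mathbf{1}\{|\Delta X^\eps_s| \le \eta\}$ is not a martingale. Its compensator is $\int_0^t b^\eps_\eta(X^\eps_s)\,ds$, where
\[
b^\eps_\eta(x) = \frac{\eps^d}{\kappa_\eps}\sum_{0<|z|\le\eta}\a\Bigl(\frac{x}{\eps},\frac{x+z}{\eps}\Bigr)J(z)\,z = \frac{1}{\eps\kappa_\eps}\sum_{0<|w|\le\eta/\eps}\a(u,u+w)\,|w|^{-(d+2)}\,w, \qquad u = x/\eps.
\]
The model is symmetric but not balanced, so this local drift has size $(\eps\kappa_\eps)^{-1}$, typically and not just in the worst case. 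The terms $w=\pm e_i$ alone contribute $\a(u,u+e_i)-\a(u,u-e_i)$. Doob's inequality controls only the compensated martingale. The drift integral over time $\delta$ is small only through cancellations along the trajectory, and exploiting those would need a corrector or a forward--backward (reversibility) decomposition. So your argument does not support the claim that the bound is ``deterministic and uniform in $x$ by uniform ellipticity alone.'' The uniform displacement bound is true, but it comes from the deterministic off-diagonal heat-kernel estimate of Proposition~\ref{p.heatkernel.offdiag}, which rests on symmetric Dirichlet-form theory. In rescaled form \eqref{e.hk.offdiag.rescaled}, summing the tail over $|z|>\eta$ gives \eqref{e.displacement.tail}, namely $\sup_x \P_x(|X^\eps_t - x| > \eta) \le Ct(1+\ln(2+\eta))/\eta^2$. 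The strong Markov property then yields compact containment and Aldous' condition (Proposition~\ref{p.tightness}). You already invoke heat kernels for equicontinuity and exit times, so derive all of these from this one bound and drop the jump decomposition.

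\textbf{Two further corrections.} First, what you call the main obstacle is not one. The killed generator $A_\eps = -\mathcal{L}^\eps_U$ is nonnegative and self-adjoint on $L^2(U^\eps)$ because the conductances are symmetric. The spectral theorem then gives $\|e^{-tA_\eps} - (I + \tfrac{t}{n}A_\eps)^{-n}\|_{\mathrm{op}} \le \sup_{s \ge 0}|e^{-ts} - (1+ts/n)^{-n}| \to 0$, uniformly in $\eps$ and in the environment, with no heat-kernel regularity or smoothing estimate. The actual work in the Euler induction is applying one-step resolvent convergence to $S_n^k h$, which lies outside your countable family. The paper handles this by first extending the convergence to every $h \in C(\overline{U})$ and every $\mu>0$, using $\|(\mu + A_\eps)^{-1}\|_{\mathrm{op}} \le 1/\mu$ and density.

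Second, Theorem~\ref{main.thm} controls only killed resolvents. Your whole-space claim $\|P^\eps_t f - \bar P_t f\|_{L^2(\eps\Zd)} \to 0$ would therefore need an extra $L^2$ localization of the whole-space resolvents, uniform in $\eps$ (discrete and continuum). That requires heat-kernel tail bounds for all starting points, not only exit probabilities from $\operatorname{supp} g$. The paper avoids this by running the entire recursion with the killed semigroups on $B_R$ and removing the killing only at the end. It uses $\P_{g_\eps}(\tau^\eps_{B_R} \le T) \to \P(\tau_{B_R} \le T)$, which is itself the case $m=1$, $\phi_1 \equiv 1$ of the killed finite-dimensional convergence.
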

	
	To the best of our knowledge, Theorem~\ref{t.walk.intro} is the first quenched invariance principle for the random conductance model at the critical exponent $\alpha = 2$.
	We expect that the uniform ellipticity and i.i.d.\ assumptions in Assumption~\textup{(U)} can be relaxed with additional work, but we do not pursue this here.
	
	The limiting Brownian motion in Theorem~\ref{t.walk.intro} has covariance matrix $(\E[\a(0,e_1)]/d) I$, corresponding to the generator coefficient $\E[\a(0,e_1)]/(2d)$ of the homogenized elliptic equation~\eqref{e.main.equations}; both depend only on the mean conductance $\E[\a(0,e_1)]$.
	This contrasts sharply with standard homogenization of the nearest-neighbour random conductance model.
	There, the effective diffusivity is a nontrivial function of the law of the conductances, determined by solving a corrector equation on the whole lattice~\cite{Kozlov1979, PapanicolaouVaradhan1981}.
	The corrector typically renormalizes the effective diffusivity: it depends not only on the mean conductance but on the full distribution.
	At the critical exponent, the corrector contribution vanishes at leading order, so the effective diffusivity is determined by the mean conductance alone.
	Intuitively, the corrector receives independent contributions from each of the $\asymp |\ln\eps|$ triadic shells of the jump kernel.
	These contributions undergo CLT-type cancellation, producing fluctuations of order $1/\sqrt{|\ln\eps|}$ that vanish in the limit.
	
	\subsection{Proof strategy}
	
	The proof of Theorem~\ref{main.thm} proceeds by a two-scale expansion, following the quantitative homogenization program of Armstrong, Kuusi, and Mourrat~\cite{ArmstrongKuusiMourrat2019} adapted to the nonlocal critical setting.
	The proof requires three ingredients.
	
	The first is a \emph{corrector estimate} (Proposition~\ref{p.dirichlet}), which shows that the corrector energy on a bounded domain converges to zero at rate $1/|\ln\eps|$, with subgaussian concentration.
	The proof constructs a divergence-free comparison flux by routing long-range excess currents along canonical nearest-neighbour paths (Lemma~\ref{l.path}).
	This solenoidal decomposition decouples the long-range and nearest-neighbour contributions, allowing us to apply McDiarmid's inequality despite the infinite range of the kernel.
	
	The second ingredient is a \emph{Poincar\'e inequality for the critical kernel} (Proposition~\ref{p.poincare}): for functions supported in a domain of diameter $R$, the $\ell^2$ norm is controlled by the normalized $H^1_{\mathrm{crit}}$ seminorm~\eqref{e.H1crit}, with a constant independent of $\eps$.
	The proof uses a single-scale averaging argument whose constant is independent of scale---a property special to the critical exponent $\alpha = 2$, where each triadic shell of the kernel contributes the same amount to the energy.
	
	The third ingredient is the \emph{two-scale expansion} itself (Section~\ref{sec:twoscale}).
	We approximate $u^\eps_\mu$ by $\bar u_\mu + \sum_{1 \le i \le d} \partial_i\bar u_\mu \cdot \phi_i$, where $\phi_i$ are the lattice correctors.
	The residual decomposes into four terms: a smooth Taylor remainder (estimated in $L^2$ via Hoeffding's concentration inequality), a boundary defect (estimated in $H^{-1}_{\mathrm{crit}}$ by a bilinear-form symmetrization), a coefficient-variation term (estimated via the corrector energy), and a mass term.
	The energy estimate combines these with the Poincar\'e inequality to yield the rate $1/\sqrt{\left| \ln\eps \right|}$.
	
	Theorem~\ref{t.walk.intro} follows from Theorem~\ref{main.thm} by a standard route: tightness of the rescaled processes via off-diagonal heat-kernel bounds and Aldous' criterion~\cite[Theorem~16.10]{Billingsley1999}, followed by identification of the limit through an Euler approximation of the semigroup~\cite[Theorem~1.6.1]{EthierKurtz1986}.
	
	\subsection{Context and related work}
	
	\subsubsection{Quantitative stochastic homogenization}
	The homogenization of elliptic equations in random media has a long history.
	For uniformly elliptic divergence-form operators with stationary ergodic coefficients, qualitative homogenization was established by Kozlov~\cite{Kozlov1979}, Papanicolaou and Varadhan~\cite{PapanicolaouVaradhan1981} and Yurinski\u{\i}~\cite{Y22}.
	A quantitative theory of stochastic homogenization was initiated more recently by Gloria and Otto~\cite{GO1,GO2} and Gloria, Neukamm, and Otto~\cite{GloriaNeukammOtto2015}, and by Armstrong and Smart~\cite{ArmstrongSmart2016}. It has since given rise to an extensive theory, and more information on the topic can be found in the notes of Armstrong and Kuusi~\cite{ArmstrongKuusi2022} and the book of Armstrong, Kuusi, and Mourrat~\cite{ArmstrongKuusiMourrat2019}.
	These works treat second-order (local) operators.
	The present paper extends the quantitative program to a nonlocal setting at the critical exponent, where new difficulties arise from the logarithmic divergence of the energy.
	
	\subsubsection{Random conductance model.}
	The nearest-neighbour random conductance model on $\Zd$ is surveyed by Biskup~\cite{Biskup2011}; quenched invariance principles under general moment conditions were obtained by Andres, Deuschel, and Slowik~\cite{AndresDeuschelSlowik2015}.
	For the long-range model with $J(z) = |z|^{-(d+\alpha)}$ and $\alpha \in (0,2)$, Bass and Levin~\cite{BassLevin2002} obtained sharp two-sided transition probability estimates on $\Zd$, Chen and Kumagai~\cite{ChenKumagai2008} established heat-kernel estimates for symmetric jump processes, and Chen, Kumagai, and Wang proved a quenched invariance principle~\cite{ChenKumagaiWang2021} and obtained quenched heat-kernel estimates with possibly degenerate weights~\cite{ChenKumagaiWang2020}.
	Qualitative homogenization for the long-range model was obtained by Flegel, Heida, and Slowik~\cite{FlegelHeidaSlowik2019}, under a finite second-moment condition $\E\bigl[\sum_{z \in \Zd \setminus \{0\}} \omega_{0,z}|z|^2\bigr] < \infty$ together with lower-tail moment conditions on the nearest-neighbour conductances, which excludes $\alpha = 2$.
	Biskup, Chen, Kumagai, and Wang~\cite{BiskupChenKumagaiWang2021} proved a quenched invariance principle under $p$-th and $q$-th moment conditions on the conductances (including inverse-conductance control on nearest-neighbour edges) with $p^{-1} + q^{-1} < 2/d$, which also fails at $\alpha = 2$.
	To our knowledge, the critical case $\alpha = 2$ has not previously been addressed in the random conductance model.
	Most closely related to the present paper is the work of Chen, Chen, Kumagai, and Wang~\cite{ChenChenKumagaiWang2025}, who established quantitative homogenization for the long-range random conductance model with $\alpha \in (0,2)$, obtaining polynomial rates.
	Their methods do not cover the critical endpoint $\alpha = 2$.
	
	\subsubsection{Nonlocal homogenization.}
	Periodic homogenization for nonlocal integro-differential equations was initiated by Schwab~\cite{Schwab2010} and extended to convolution-type operators by Piatnitski and Zhizhina~\cite{PiatnitskiZhizhina2017}.
	Kassmann, Piatnitski, and Zhizhina~\cite{KassmannPiatnitskiZhizhina2019} treated L\'evy-type operators with oscillating coefficients.
	On the stochastic side, qualitative homogenization for symmetric stable-like processes in stationary ergodic media was proved by Chen, Chen, Kumagai, and Wang~\cite{ChenChenKumagaiWang2021SIAM}.
	These results concern the sub-critical regime $\alpha < 2$.

	\subsubsection{The balanced environment case.}
	Chen, Chen, Kumagai, and Wang~\cite{ChenChenKumagaiWang2021} proved a quenched functional CLT for random walks in \emph{balanced} random environments with long-range jumps, covering $\alpha \in (0,2]$.
	At $\alpha = 2$, they obtain convergence to Brownian motion under the scaling $\sqrt{n \log n}$.
	The balanced condition requires the local drift $\sum_{z \in \Zd \setminus \{0\}} \omega(x,z) z$ to vanish at every site, pathwise.
	The i.i.d.\ random conductance model is reversible (symmetric) but \emph{not} balanced: the conductances $\a(\{x,x{+}z\})$ and $\a(\{x,x{-}z\})$ are independent, so the local drift is generically nonzero.
	Thus their result and ours address different models.
	Moreover, their result is qualitative (no convergence rate), while ours provides quantitative estimates.
	
	\subsubsection{Anomalous scaling and superdiffusion.}
	The anomalous $\sqrt{n \log n}$ scaling in Theorem~\ref{t.walk.intro} is an instance of borderline superdiffusivity: each triadic shell of the critical kernel contributes equally to the effective diffusivity.
	Anomalous diffusive scaling arises in diverse physical settings, including polymer models, random media, and turbulent transport; Bouchaud and Georges~\cite{BouchaudGeorges1990} give a comprehensive review.
	The jump kernel $|z|^{-(d+2)}$ has tail index $2$ in the classification of~\cite[Section~1.2]{BouchaudGeorges1990}, the marginal value at the boundary between stable and Gaussian regimes.
	For this marginal case, the second moment diverges logarithmically and the displacement satisfies $|X_t|^2 \sim t \ln t$~\cite[eq.~(1.19)]{BouchaudGeorges1990}.
	Convergence to Brownian motion under the same $\sqrt{n \log n}$ normalization was first established for the periodic Lorentz gas with infinite horizon by Sz\'asz and Varj\'u~\cite{SzaszVarju2007}.
	More recently, Armstrong, Bou-Rabee, and Kuusi~\cite{ArmstrongBouRabeeKuusi2024} proved a quenched superdiffusive central limit theorem for diffusion in a critically-correlated incompressible random drift, and Cannizzaro, Moulard, and Toninelli~\cite{CannizzaroMoulardToninelli2025} proved a superdiffusive central limit theorem for the stochastic Burgers equation at critical dimension.

	\subsection{Outline}
	
	Section~\ref{sec:notation} introduces notation.
	Section~\ref{sec:mainresult} establishes the corrector estimate.
	Section~\ref{sec:poincare} proves the Poincar\'e inequality for the critical kernel.
	Section~\ref{sec:twoscale} carries out the two-scale expansion and proves Theorem~\ref{main.thm}.
	Section~\ref{sec:convergence} deduces Theorem~\ref{t.walk.intro} from Theorem~\ref{main.thm} via tightness and an Euler approximation of the semigroup.
	Lattice sum estimates and heat kernel bounds are collected in Appendices~\ref{sec:lattice} and~\ref{sec:heatkernel}.
	
	\medskip
	
	\subsection*{Acknowledgments} During the preparation of this manuscript, we learned of independent and simultaneous work by X. Chen, C. Gu and J. Wang on this problem. We thank them for pleasant interactions and for sharing the details of their project.

	\section{Notation and preliminaries} \label{sec:notation}
	
	\subsection{Notation} \label{sec:secnotation}
	
	\subsubsection{General notation}
	We work on the standard integer lattice $\Zd$ with standard orthonormal basis $e_1, \ldots, e_d$.
	We let $|\cdot|$ and $|\cdot|_1$ denote the Euclidean and $\ell^1$ norms on $\Rd$.
	For $\eps \in (0,1)$, we frequently work on the rescaled lattice $\eps\Zd$; given a set $U \subseteq \Rd$, we write
	\[
	U^\eps := U \cap \eps \Zd.
	\]
	
	The jump kernel is
	\begin{equation}
		\label{e.J}
		J(z) \coloneqq |z|^{-(d+2)} \qquad \text{for } z \in \Rd \setminus \{0\}\,,
	\end{equation}
	and $J(0) \coloneqq 0$.
	For $\eps \in (0,1)$, we define
	\begin{equation}
		\label{e.kappaeps}
		\kappa_\eps \coloneqq \eps^d \sum_{\substack{z \in \eps\Zd \setminus \{0\} \\ |z| \le 1}} |z|^{-d}.
	\end{equation}
	By Proposition~\ref{sumesonthelattice}, we have
	$\kappa_\eps = d V_d \left| \ln\eps \right| + O(1),$
	where $V_d := \pi^{d/2} / \Gamma (d/2 + 1)$ is the volume of the unit ball of $\R^d$.  For $p \in \Rd$, we define the linear function $\ell_p(x) \coloneqq p \cdot x$.
	For an integer $m \ge 0$, the triadic cube of side length $3^m$ is $\cu_m \coloneqq \bigl(-\tfrac{3^m}{2}, \tfrac{3^m}{2}\bigr)^d$.
	
	For a random variable $X$ and an exponent $s \in (0,\infty)$, we write 
	\[
	X \leq \O_{s}(\theta) \quad \text{if and only if} \quad \P\bigl[\max(X, 0) > t\theta\bigr] \le e^{-t^s} \quad \text{for every } t \ge 1.
	\]
	Let us state a few properties associated with this notation (for which we refer to~\cite[Appendix A]{ArmstrongKuusiMourrat2019}):
	\begin{itemize}
		\item Every non-negative real number $A \geq 0$ (seen as a constant random variable) satisfies $A \leq \mathcal{O}_2(A)$.
		\item For all $s, s' \in (0, \infty)$ with $s \leq s'$ and every random variable $X \leq \mathcal{O}_{s'}(\theta)$, we have $X \leq \mathcal{O}_{s}(\theta)$.
		\item For every $p \geq 1$, there exists a constant $C_{s,p} < \infty$ such that, for every $\theta \in (0, \infty)$ and every nonnegative random variable $X$ satisfying $X \leq \mathcal{O}_s(\theta)$, we have
		\begin{equation} \label{eq:upperboundO2norm}
			\E \left[ X^p \right] \leq C_{s,p} \theta^p.
		\end{equation}
		\item There exists a constant $C_s < \infty$ such that for every integer $N \in \mathbb{N}$, every collection of random variables $X_1 , \ldots, X_N$, and every collection of nonnegative numbers $\theta_1 , \ldots, \theta_N$ satisfying $X_i \leq \mathcal{O}_s(\theta_i)$ for every $1 \le i \le N$, one has
		\[
		\sum_{i=1}^N X_i \leq \mathcal{O}_s\left(C_s \sum_{i = 1}^N \theta_i\right).
		\]
		In particular,
		\[
		\frac{1}{N} \sum_{i=1}^N X_i \leq \mathcal{O}_s\left(\frac{C_s}{N} \sum_{i = 1}^N \theta_i\right).
		\]
	\end{itemize}
	
	\subsubsection{Functions and vector fields}
	We now fix a bounded set $U \subseteq \Rd$ and $\eps \in (0,1)$.
	We say that a function $h : \eps\Zd \to \R$ is supported in $U^\eps$ if $h = 0$ on $\eps\Zd \setminus U^\eps$.
	
	For such a function $h$, we define the critical $H^1$ seminorm by
	\begin{equation}
		\label{e.H1crit}
		\|h\|_{H^1_{\mathrm{crit}}(U^\eps)}^2
		\coloneqq \frac{\eps^{2d}}{\kappa_\eps}
		\sum_{x, z \in \eps\Zd} J(z)\bigl(h(x{+}z) - h(x)\bigr)^2.
	\end{equation}
	For a function $g \colon \eps\Zd \to \R$ supported in $U^\eps$, we define the dual norm
	\begin{equation}
		\label{e.Hm1crit}
		\|g\|_{H^{-1}_{\mathrm{crit}}(U^\eps)}
		\coloneqq \sup\biggl\{\eps^d \sum_{x \in \eps\Zd} g(x) h(x)
		: h = 0 \text{ on } \eps\Zd \setminus U^\eps,
		\|h\|_{H^1_{\mathrm{crit}}(U^\eps)} \le 1\biggr\}\,.
	\end{equation}
	
	A vector field is a function $\mathbf{g} : \eps\Zd \times \eps\Zd \to \R$ satisfying
	\[
	\mathbf{g}(x,y) = -\mathbf{g}(y,x) \qquad \text{for all } x, y \in \eps\Zd\,.
	\]
	We say that $\mathbf{g}$ is divergence free or solenoidal on $U^\eps$ if
	\[
	\sum_{z \in \eps\Zd \setminus \{0\}} \mathbf{g}(x+z,x) = 0 \qquad \text{for every } x \in U^\eps,
	\]
	and we always assume that the sum converges absolutely.
	This is equivalent to the statement that, for every function $w : \eps\Zd \to \R$ supported in $U^\eps$,
	\begin{equation} \label{eq:gissolenoidal}
		\sum_{x, z \in \eps\Zd} \bigl(w(x+z) - w(x)\bigr) \mathbf{g}(x+z, x) = 0.
	\end{equation}
	
	\subsection{Preliminaries}
	
	This section contains some preliminary results which are used in the proofs: we state two concentration inequalities (the Hoeffding and McDiarmid inequalities) in Section~\ref{sec:concentration}, a combinatorial lemma in Section~\ref{sec:pathcounting} and the Aldous criterion for tightness of c\`adl\`ag stochastic processes in Section~\ref{sec:aldouscriterion}. 
	
	\subsubsection{Concentration inequalities} \label{sec:concentration}
	
	We state two concentration inequalities: the Hoeffding inequality (extended to infinite sums of independent random variables) and McDiarmid's inequality.
	
	\begin{proposition}[Hoeffding inequality] \label{prop:concentrationineq}
		Let $(X_n)_{n \in \mathbb{N}}$ be independent random variables satisfying $\E[X_n] = 0$ and $|X_n| \le 1$ almost surely, and let $(a_n)_{n \in \mathbb{N}}$ be a sequence of real numbers such that $\sum_{n=1}^\infty a_n^2 < \infty$.
		Then the series $\sum_{n=1}^\infty a_n X_n$ converges almost surely and in $L^2$.
		Moreover, there exists a universal constant $C < \infty$ such that
		\begin{equation} \label{ineq:Hoeffdinginfinitesum}
			\left| \sum_{n=1}^\infty a_n X_n \right| \leq \mathcal{O}_2\left( C\Bigl(\sum_{n=1}^\infty a_n^2\Bigr)^{1/2} \right).
		\end{equation}
	\end{proposition}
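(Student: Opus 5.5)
Write $S_N \coloneqq \sum_{n=1}^N a_n X_n$ and $\sigma^2 \coloneqq \sum_{n=1}^\infty a_n^2$. The plan is to prove convergence first, then the tail bound for the partial sums, and finally pass to the limit.

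\emph{Convergence.} The summands $a_n X_n$ are independent and centered, with $\E[(a_nX_n)^2] \le a_n^2$. Hence $(S_N)_{N\ge1}$ is an $L^2$-bounded martingale, since $\E[S_N^2] = \sum_{n\le N} a_n^2 \E[X_n^2] \le \sigma^2$. By the martingale convergence theorem (or Kolmogorov's two-series theorem for a.s.\ convergence, together with orthogonality of increments for $L^2$ convergence), $S_N$ converges almost surely and in $L^2$ to a limit $S \coloneqq \sum_{n=1}^\infty a_n X_n$.

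\emph{Tail bound for finite sums.} Since $a_nX_n \in [-|a_n|, |a_n|]$ is centered, Hoeffding's lemma gives $\E[e^{\theta a_n X_n}] \le e^{\theta^2 a_n^2/2}$ for every $\theta \in \R$. By independence,
\[
\E\bigl[e^{\theta S_N}\bigr] \le e^{\theta^2 \sigma^2/2}
\qquad \text{for every } \theta \in \R \text{ and every } N \ge 1.
\]
We apply Chernoff's bound with $\theta = t/\sigma$, and the same bound to $-S_N$. This gives
\[
\P\bigl[|S_N| > t\sigma\bigr] \le 2e^{-t^2/2} \qquad \text{for every } t > 0,
\]
uniformly in $N$.

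\emph{Passing to the limit.} Since $S_N \to S$ almost surely, Fatou's lemma applied to the exponential moments yields $\E[e^{\theta S}] \le e^{\theta^2\sigma^2/2}$. Hence $\P[|S| > t\sigma] \le 2e^{-t^2/2}$. Alternatively, one can use $\P[|S|>t\sigma] \le \liminf_N \P[|S_N|>t\sigma]$, which holds because the set $\{|s| > t\sigma\}$ is open.

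\emph{Conversion to the $\O_2$ notation.} This is the only point requiring care, because the definition demands $\P[|S| > tC\sigma] \le e^{-t^2}$ for every $t \ge 1$, with no prefactor $2$. Take $C = 2$. Then
\[
\P\bigl[|S| > 2t\sigma\bigr] \le 2e^{-2t^2} \le e^{-t^2} \qquad \text{for } t \ge 1,
\]
since $2e^{-t^2} \le 1$ when $t^2 \ge \ln 2$. This proves \eqref{ineq:Hoeffdinginfinitesum} with a universal constant. The degenerate case $\sigma = 0$ is trivial, since then $S = 0$ almost surely.

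I expect no real obstacle in this argument. The only subtleties are the passage to the infinite sum, handled through almost sure convergence, and absorbing the factor $2$ into the constant $C$.
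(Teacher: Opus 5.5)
Your proposal is correct and follows the same route as the paper: the partial sums form an $L^2$-bounded martingale, so they converge almost surely and in $L^2$, and the finite-dimensional Hoeffding bound, uniform in $N$, passes to the limit through almost sure convergence. You also spell out details the paper leaves implicit, namely the sub-Gaussian moment bound via Hoeffding's lemma, the Fatou (or open-set) limiting argument, and the choice $C = 2$ that absorbs the prefactor $2$ into the $\mathcal{O}_2$ convention; all of these are correct.
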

	
	\begin{proof}
		The partial sums $S_N \coloneqq \sum_{n=1}^N a_n X_n$ form an $L^2$-bounded martingale (since, using $|X_n| \le 1$, $\E[|S_N - S_M|^2] = \sum_{M < n \le N} a_n^2 \E[X_n^2] \le \sum_{M < n \le N} a_n^2 \to 0$), and hence converge almost surely and in $L^2$.
		The tail bound follows by applying the finite-dimensional Hoeffding inequality to $S_N$ and passing to the limit via almost sure convergence.
	\end{proof}
	
	\begin{proposition}[McDiarmid's inequality] \label{prop.McDiarmid}
		Let $\lambda \in (0,1]$, let $X_1, \ldots, X_n$ be independent random variables taking values in $[\lambda, \lambda^{-1}]$, and let $f : [\lambda, \lambda^{-1}]^n \to \R$ be Borel measurable.
		Assume that there exist constants $a_1, \ldots, a_n \ge 0$ such that, for every $1 \le i \le n$,
		\[
		\sup_{\substack{x_1, \ldots, x_n \in [\lambda, \lambda^{-1}] \\ x_i' \in [\lambda, \lambda^{-1}]}} \left| f(x_1, \ldots, x_n) - f(x_1, \ldots, x_{i-1}, x_i', x_{i+1}, \ldots, x_n) \right| \le a_i\,.
		\]
		Then, for every $t > 0$,
		\[
		\P\left( \left| f(X_1, \ldots, X_n) - \E[f(X_1, \ldots, X_n)] \right| > t \right) \le 2\exp\left( -\frac{2t^2}{\sum_{i=1}^n a_i^2} \right).
		\]
		In particular, there exists a universal constant $C < \infty$ such that
		\[
		\left| f(X_1, \ldots, X_n) - \E[f(X_1, \ldots, X_n)] \right| \le \mathcal{O}_2\left( C\Bigl(\sum_{i=1}^n a_i^2\Bigr)^{1/2} \right).
		\]
	\end{proposition}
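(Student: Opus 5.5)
The statement is the classical McDiarmid bounded-differences inequality. I would prove it by the Doob martingale method combined with Hoeffding's lemma. Set $Z \coloneqq f(X_1,\ldots,X_n)$, let $\mathcal{F}_k \coloneqq \sigma(X_1,\ldots,X_k)$, and define $M_k \coloneqq \E[Z \mid \mathcal{F}_k]$. Then $M_0 = \E[Z]$, $M_n = Z$, and $Z - \E[Z] = \sum_{k=1}^n D_k$ with $D_k \coloneqq M_k - M_{k-1}$. Since $f$ is bounded (its oscillation is at most $\sum a_i$ plus a finite value at one point), all conditional expectations are well defined. By independence, $M_k = g_k(X_1,\ldots,X_k)$ with $g_k(x_1,\ldots,x_k) = \E[f(x_1,\ldots,x_k,X_{k+1},\ldots,X_n)]$.

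The key step is to show that, conditionally on $\mathcal{F}_{k-1}$, the increment $D_k$ lies in an interval of length at most $a_k$. Put
\[
L_k \coloneqq \inf_{y\in[\lambda,\lambda^{-1}]} g_k(X_1,\ldots,X_{k-1},y) - M_{k-1}, \qquad U_k \coloneqq \sup_{y} g_k(X_1,\ldots,X_{k-1},y) - M_{k-1}.
\]
Both quantities are $\mathcal{F}_{k-1}$-measurable, and $L_k \le D_k \le U_k$. The bounded-differences hypothesis, integrated against the law of $(X_{k+1},\ldots,X_n)$, gives $U_k - L_k \le a_k$. For measurability of the sup and inf one can restrict to rational $y$ up to an arbitrarily small error, or simply note that only the length bound is needed.

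Next apply Hoeffding's lemma conditionally: a random variable $D$ with $\E[D\mid\mathcal{G}]=0$ and $L\le D\le L+a$, where $L$ is $\mathcal{G}$-measurable, satisfies $\E[e^{sD}\mid\mathcal{G}] \le e^{s^2a^2/8}$. This is proved by convexity of the exponential on the interval and a second-order Taylor bound on the resulting log-moment function. Iterating via the tower property yields $\E[e^{s(Z-\E Z)}] \le \exp\bigl(s^2\sum_k a_k^2/8\bigr)$. Markov's inequality with the optimal choice $s = 4t/\sum a_k^2$ gives the one-sided bound $\exp(-2t^2/\sum a_k^2)$. Applying the same argument to $-f$ and taking a union bound gives the factor $2$.

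For the $\mathcal{O}_2$ form, write $\theta \coloneqq C(\sum a_i^2)^{1/2}$. Then for $t\ge1$,
\[
\P\bigl[|Z-\E Z|>t\theta\bigr] \le 2e^{-2C^2t^2} \le e^{-t^2}
\]
as soon as $C^2 \ge \ln 2$, since then $2e^{-2C^2t^2}\le e^{-t^2}$ holds for all $t\ge1$. The main, and only mildly delicate, point is the conditional range bound $U_k - L_k\le a_k$, which uses independence of the $X_i$ essentially. Everything else is the standard Chernoff computation.
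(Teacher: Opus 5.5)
The paper does not prove this proposition; it quotes it as the classical bounded-differences inequality (only the infinite-sum version of Hoeffding's inequality gets a proof). Your Doob-martingale argument with a conditional Hoeffding lemma is the standard proof and is correct, up to two small repairs.

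\textbf{Measurability of the supremum.} Restricting the supremum to rational $y$ is not legitimate for a merely Borel $f$. Take $n=1$ and $f=a_1\mathbf{1}_{\{y_0\}}$, with $y_0$ irrational and an atom of the law of $X_1$. The rational supremum misses the spike, so $D_1\le U_1$ fails on $\{X_1=y_0\}$.

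Your other option, that only the length bound matters, is the right one. It becomes precise once you freeze the past. Let $\rho_k$ be the law of $X_k$, let $x'=(x_1,\ldots,x_{k-1})$, and set $h_{x'}(y)\coloneqq g_k(x',y)-g_{k-1}(x')$. Independence and Fubini give $\int h_{x'}\,d\rho_k=0$. They also give $\E[e^{sD_k}\mid\mathcal{F}_{k-1}]=\int e^{s h_{x'}}\,d\rho_k$, evaluated at $x'=(X_1,\ldots,X_{k-1})$. For each fixed $x'$, the function $h_{x'}$ takes values in a deterministic interval of length at most $a_k$. The unconditional Hoeffding lemma then yields the bound $e^{s^2a_k^2/8}$, with no measurability question about $\sup_y$. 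Alternatively, you can use the conditional essential infimum and supremum of $D_k$.

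\textbf{The constant in the $\mathcal{O}_2$ step.} The inequality $2e^{-2C^2t^2}\le e^{-t^2}$ for all $t\ge 1$ requires $2C^2-1\ge \ln 2$, not $C^2\ge\ln 2$. Indeed, for $C^2=\ln 2$ and $t=1$ the left side equals $1/2>e^{-1}$. Choosing $C=1$ works, since $e^{-t^2}\le 1/2$ for $t\ge 1$.

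With these adjustments your proof is complete.
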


	\subsubsection{Path counting} \label{sec:pathcounting}

	\begin{lemma}
		\label{l.path}
		For $u, v \in \Zd$ with $u \ne v$, let $\gamma_{v \to u}$ be the nearest-neighbour path from $v$ to $u$ obtained by changing coordinates one at a time in the order $1, \ldots, d$.
		Let $e$ be an undirected nearest-neighbour edge of $\Zd$, and let $z \in \Zd \setminus \{0\}$.
		Then the number of pairs $(u,v) \in (\Zd)^2$ such that $v - u = z$ and $\gamma_{v \to u}$ uses $e$ is at most $|z|_1$.
		
		More generally, for $\eps \in (0,1)$, let $e$ be an undirected nearest-neighbour edge of $\eps\Zd$, and let $z \in \eps\Zd \setminus \{0\}$.
		Then the number of pairs $(u,v) \in (\eps\Zd)^2$ such that $v - u = z$ and $\gamma_{v \to u}$ uses $e$ is at most $|z|_1/\eps$.
	\end{lemma}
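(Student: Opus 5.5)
The plan is to use translation invariance: every step of $\gamma_{v \to u}$ is determined by the displacement $z$ together with the starting point. Write $u = v - z$. The path $\gamma_{v\to u}$ starts at $v$ and first moves in coordinate $1$ from $v_1$ to $u_1$. It then moves in coordinate $2$, and so on. Since $v - u = z$, the path equals $v + \gamma_{0 \to -z}$, where $\gamma_{0\to -z}$ is the canonical path from $0$ to $-z$. This path has exactly $|z|_1$ steps (the unit lattice case), and it visits $|z|_1 + 1$ distinct vertices. They are distinct because each coordinate is changed monotonically and only once, in a fixed order.

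Fix the undirected edge $e = \{a, a + e_j\}$. A pair $(u,v)$ with $v-u=z$ is determined by $v$. The path $v + \gamma_{0\to -z}$ uses $e$ exactly when some step $(w_k, w_{k+1})$ of $\gamma_{0\to -z}$ satisfies $\{v + w_k, v + w_{k+1}\} = e$. Each such step is a single nearest-neighbour edge $\{w_k, w_{k+1}\}$. Given that step, the condition $\{v+w_k, v+w_{k+1}\} = e$ forces the translate of that edge to equal $e$. The translation $v$ is then determined uniquely: if $w_{k+1} - w_k = \pm e_j$, then $v$ is $a - w_k$ or $a - w_{k+1}$, according to the orientation. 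Only one orientation matches, because the undirected edge $\{w_k, w_{k+1}\}$ has a well-defined lower endpoint in direction $j$, and $v$ must send that lower endpoint to $a$.

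It follows that the map from steps of $\gamma_{0\to-z}$ to admissible $v$ is surjective onto the admissible set. The number of admissible $v$ is therefore at most the number of steps, which is $|z|_1$. This gives the first claim. Some steps may be incompatible with $e$, for instance because they point in the wrong direction; this only lowers the count.

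The rescaled statement follows by dilation. The map $x \mapsto x/\eps$ sends $\eps\Zd$ to $\Zd$ and nearest-neighbour edges to nearest-neighbour edges. It also commutes with the coordinate-by-coordinate path construction. It sends $z$ to $z/\eps$, whose $\ell^1$ norm is $|z|_1/\eps$.

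No real obstacle is expected. The only point needing care is the injectivity-type claim that a single step of the path determines $v$ uniquely, given that $e$ is undirected. This is handled by matching the lower endpoint in direction $j$, as above.
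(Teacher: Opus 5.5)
Your proof is correct and is essentially the paper's argument. Every path with increment $z$ is a translate of a single reference path with $|z|_1$ edges, each reference edge can be carried onto the undirected edge $e$ by at most one translation, and the $\eps\Zd$ case follows by rescaling $x \mapsto x/\eps$; the only cosmetic difference is that you parametrize translates by $v$ via $\gamma_{0\to -z}$ while the paper uses $\gamma_{z\to 0}$ and parametrizes by $u$, which changes nothing.
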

	
	\begin{proof}
		The reference path $\gamma_{z \to 0}$ in $\Zd$ contains exactly $|z|_1$ edges.
		Every path $\gamma_{v \to u}$ with increment $v - u = z$ is a translate of $\gamma_{z \to 0}$.
		Such a translate uses $e$ if and only if one of the reference edges of $\gamma_{z \to 0}$ is translated onto $e$, and each reference edge determines at most one translate.
		Hence the number of such translates is at most $|z|_1$.
		
		For the lattice $\eps\Zd$, apply the first part to the rescaled points $u/\eps$, $v/\eps$, $z/\eps \in \Zd$ and to the rescaled edge $e/\eps$.
		Since $|z/\eps|_1 = |z|_1/\eps$, the bound becomes $|z|_1/\eps$.
	\end{proof}
	
	\begin{figure}[ht]
		\centering
		\begin{tikzpicture}[scale=0.9,>=Stealth,
			dot/.style={circle,fill=black!15,inner sep=0pt,minimum size=2pt},
			vert/.style={circle,inner sep=0pt,minimum size=5pt},
			]
			\foreach \x in {-1,0,...,9} {
				\foreach \y in {-1,0,...,4} {
					\node[dot] at (\x,\y) {};
				}
			}
			\foreach \ax/\ay/\bx/\by in {3/2/2/2, 2/2/1/2, 1/2/0/2, 0/2/0/1, 0/1/0/0} {
				\draw[RoyalBlue,line width=1.1pt,->] (\ax,\ay)--(\bx,\by);
			}
			\node[vert,fill=RoyalBlue,label={[font=\footnotesize]above right:$z$}] at (3,2) {};
			\node[vert,fill=RoyalBlue,label={[font=\footnotesize]below left:$0$}] at (0,0) {};
			\node[font=\footnotesize,RoyalBlue,left] at (-0.4,1.5) {$\gamma_{z \to 0}$};
			%
			\fill[BrickRed,opacity=0.15,rounded corners=1pt] (4.85,1.85) rectangle (6.15,2.15);
			\draw[BrickRed,line width=1.2pt] (5,2)--(6,2);
			\node[font=\footnotesize,BrickRed,below] at (5.5,1.8) {$e$};
			\foreach \ax/\ay/\bx/\by in {6/2/5/2, 5/2/4/2, 4/2/3/2, 3/2/3/1, 3/1/3/0} {
				\draw[PineGreen,line width=0.9pt,densely dashed,->,opacity=0.6] (\ax,\ay)--(\bx,\by);
			}
			\node[vert,fill=PineGreen,label={[font=\scriptsize,PineGreen]above:$v_1$}] at (6,2) {};
			\node[vert,fill=PineGreen,label={[font=\scriptsize,PineGreen]below:$u_1$}] at (3,0) {};
			\foreach \ax/\ay/\bx/\by in {8/2/7/2, 7/2/6/2, 6/2/5/2, 5/2/5/1, 5/1/5/0} {
				\draw[Orchid,line width=0.9pt,densely dotted,->,opacity=0.6] (\ax,\ay)--(\bx,\by);
			}
			\node[vert,fill=Orchid,label={[font=\scriptsize,Orchid]above right:$v_2$}] at (8,2) {};
			\node[vert,fill=Orchid,label={[font=\scriptsize,Orchid]below:$u_2$}] at (5,0) {};
		\end{tikzpicture}
		\caption{Path counting (Lemma~\ref{l.path}) for $z = (3,2)$ in $\Z^2$.
			The reference path $\gamma_{z \to 0}$ (blue) has $|z|_1 = 5$ edges.
			Every path $\gamma_{v \to u}$ with $v - u = z$ is a translate of~$\gamma_{z \to 0}$.
			Two translates (green dashed, purple dotted) both use the target edge~$e$ (red), each because a different reference edge of $\gamma_{z \to 0}$ lands on~$e$.
			Since each reference edge determines at most one translate, at most $|z|_1$ paths use~$e$.}
		\label{fig:pathcount}
	\end{figure}
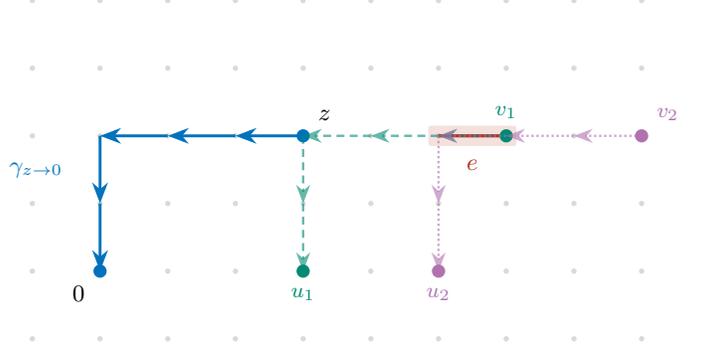

	\subsubsection{Aldous criterion} \label{sec:aldouscriterion}
	
	\begin{proposition}[Aldous' criterion for tightness {\cite[Theorem~16.10]{Billingsley1999}}]
		Let $T > 0$, and let $\{X^n\}$ be a sequence of c\`adl\`ag $\Rd$-valued processes on $[0,T]$.
		If the following two conditions hold, then $\{X^n\}$ is tight in $D([0,T]; \Rd)$:
		\begin{enumerate}
			\item For every $t \in [0,T]$, the family of laws of $\{X^n_t\}$ is tight in $\Rd$.
			\item For every $\eta > 0$ and every sequence of stopping times $\{\tau_n\}$ bounded by $T$,
			\[
			\lim_{\delta \to 0} \limsup_{n \to \infty} \sup_{0 \le \theta \le \delta} \P\left( |X^n_{(\tau_n + \theta) \wedge T} - X^n_{\tau_n}| \ge \eta \right) = 0\,.
			\]
		\end{enumerate}
	\end{proposition}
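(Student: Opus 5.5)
This is the standard Aldous criterion for tightness, quoted from Billingsley, Theorem~16.10, so the plan is to reconstruct the argument rather than build anything new. By the characterization of compact sets in $D([0,T];\Rd)$, tightness is equivalent to two things. First, the family $\sup_{t\le T}|X^n_t|$ is tight. Second, for every $\eta>0$, $\lim_{\delta\to0}\limsup_n \P\bigl(w'(X^n,\delta)\ge\eta\bigr)=0$, where $w'$ is the c\`adl\`ag modulus of continuity built from partitions with mesh larger than $\delta$. I will verify both of these from conditions (1) and (2).

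\emph{Step 1: a stopping-time version of (2).} I first upgrade condition (2). The goal is the following: for any stopping times $\sigma_n\le\tau_n\le T$ with $\tau_n-\sigma_n\le\delta$, the probability $\P\bigl(|X^n_{\tau_n}-X^n_{\sigma_n}|\ge\eta\bigr)$ is small uniformly in large $n$.

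Condition (2) controls increments only over \emph{deterministic} lags $\theta$. To handle a random lag, I average over an auxiliary deterministic lag: take $\theta$ uniform on $[0,2\delta]$ and compare both $X_{\tau_n}$ and $X_{\sigma_n}$ with $X_{\sigma_n+\theta}$. Conditionally on $\mathcal F_{\tau_n}$, the point $\sigma_n+\theta$ exceeds $\tau_n$ on a set of $\theta$ of measure at least $\delta$. This gives the bound via condition (2) applied at $\sigma_n$ and at $\tau_n$, with lags at most $2\delta$ (Billingsley's Lemma~16.12-type trick).

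\emph{Step 2: oscillation control.} Define successive stopping times $\tau^n_0=0$ and $\tau^n_{k+1}=\inf\{t>\tau^n_k: |X^n_t-X^n_{\tau^n_k}|\ge\eta\}\wedge T$. By Step 1, $\P(\tau^n_{k+1}-\tau^n_k\le\delta,\ \tau^n_k<T)$ is small. This is applied along the chain, with a bound of the form $\E[e^{-(\tau_{k+1}-\tau_k)}]\le$ const$<1$ on the relevant event, to control the number of jumps before $T$. It follows that with high probability only $K=K(\eta)$ of these times occur before $T$, and consecutive ones are separated by more than $\delta$.

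The resulting partition has mesh $>\delta$ and oscillation $\le 2\eta$ on each block, so $w'(X^n,\delta)\le 2\eta$ with high probability. This is the main obstacle. The delicate part is obtaining uniform control on the number $K$ of $\eta$-oscillations; I would follow Billingsley's geometric-series estimate on $\E[e^{-\tau_K}]$.

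\emph{Step 3: compact containment.} The quantity $\sup_t|X^n_t|$ is bounded by $|X^n_0|$ plus $K$ times $\eta$ plus the jumps at the stopping times. Each of these is tight by condition (1) applied at $t=0$ and at the grid times, combined with Step 2. Combining Steps 2 and 3 with the Arzel\`a--Ascoli analogue in $D$ (Billingsley, Theorem~13.2) yields tightness.
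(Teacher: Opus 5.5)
Your outline is the standard proof of the cited result; the paper itself gives no argument beyond the reference to Billingsley. Step~1 is sound: averaging over $\theta\in[0,2\delta]$ and keeping only those $\theta$ with $\sigma_n+\theta\ge\tau_n$ is the right trick.

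\textbf{The counting step.} The trouble is in Step~2, which you yourself flag as the crux, and the mechanism you propose for it does not work. Step~1 gives only \emph{unconditional} bounds: for each $k$,
\[
\P\bigl(\tau_{k+1}-\tau_k\le\delta,\ |X^n_{\tau_{k+1}}-X^n_{\tau_k}|\ge\eta\bigr)\le\epsilon(\delta).
\]
A per-step bound $\E[e^{-(\tau_{k+1}-\tau_k)}]\le\beta<1$ cannot be multiplied along the chain to give $\E[e^{-\tau_K}]\le\beta^K$. That would require the bound conditionally on $\mathcal F_{\tau_k}$, and condition~(2) gives no conditional information: replacing $\tau$ by $\tau\mathbf 1_A+T\mathbf 1_{A^c}$ with $A\in\mathcal F_\tau$ yields only $\P(A\cap\{\cdots\})\le\epsilon$, not $\le\epsilon\,\P(A)$. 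Nor can you union-bound over all $\approx T/\delta$ gaps, since $(T/\delta)\,\epsilon(\delta)$ need not tend to zero.

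What works is the following. Freeze $\delta_1$ with $\epsilon(\delta_1)\le\epsilon$ and take $K\ge 2T/\delta_1$. On $\{\tau_K<T\}$ the first $K$ crossings are genuine and more than $K/2$ of the gaps are $\le\delta_1$, so Markov's inequality gives $\P(\tau_K<T)\le 2\epsilon$. Alternatively, H\"older gives $\E[e^{-\tau_K};\tau_K<T]\le\epsilon+e^{-K\delta_1}$. Only after $K$ is fixed do you let $\delta\to0$ and union-bound over the $K$ gaps, at cost $K\epsilon(\delta)$.

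\textbf{The final block.} This is not handled, and in $D([0,T];\Rd)$ it matters. Take $X^n_t=c_nt$ with $c_n=j\eta/(T-1/n)$ for a fixed integer $j$, so that the $j$-th $\eta$-crossing occurs at $T-1/n$. Both hypotheses hold, but the last block $[\tau_j,T)$ has length $1/n$. Your claim that $\P(\tau_{k+1}-\tau_k\le\delta,\ \tau_k<T)$ is small also fails here, because $\tau_{j+1}=T$ comes from the truncation. In $D([0,\infty))$ a short last block is harmless. On $[0,T]$, however, a $\delta$-sparse partition needs $T-t_{v-1}>\delta$ as well; think of $\mathbf 1_{[T-1/n,T]}$, which is not relatively compact.

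The fix is where the $\wedge T$ in condition~(2) actually enters. Apply Step~1 with $\sigma=T-2\delta$ and $\tau$ the first time after $\sigma$ at which $|X^n_t-X^n_\sigma|\ge\eta$, capped at $T$. This gives $\sup_{t\in[T-2\delta,T]}|X^n_t-X^n_{T-2\delta}|<\eta$ with high probability. Then discard the $\tau_k$ lying in $(T-2\delta,T)$; the last block has length at least $2\delta$ and oscillation below $4\eta$. The same estimate, together with condition~(1) at $t=T$, is what Step~3 needs on the last block, which may contain no grid point.
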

	
	\subsection{Convention for constants} Throughout this article, the symbols $c$ and $C$ denote positive constants which may vary from line to line, with $C$ larger than $1$ increasing and $c$ smaller than $1$ and decreasing. Unless otherwise indicated, these constants may depend only on the ellipticity $\lambda$, the dimension $d$, and the set~$U$.
	
	\section{Convergence rate of the corrector}
	\label{sec:mainresult}
	
	Throughout this section, we fix a bounded Lipschitz domain $U \subseteq \Rd$.
	
	\begin{definition}[First-order corrector]
		Given a slope $p \in \Rd$, a bounded Lipschitz domain $U \subseteq \Rd$, and $\eps \in (0, 1)$, we define
		\begin{equation}
			\label{e.nu}
			\nu(U^\eps, p)
			\coloneqq -\inf_{\substack{w \colon \eps\Zd \to \R \\ w = 0 \text{ on } \eps\Zd \setminus U^\eps}}
			\frac{\eps^{2d}}{2\kappa_\eps}
			\sum_{\substack{x, z \in \eps\Zd, z \ne 0 \\ x \in U^\eps \text{ or } x+z \in U^\eps}}
			\a\left(\frac{x}{\eps}, \frac{x+z}{\eps}\right) J(z)
			\Bigl[\bigl(p \cdot z + w(x{+}z) - w(x)\bigr)^2 - |p \cdot z|^2\Bigr].
		\end{equation}
		The factor $1/2$ ensures that each unordered edge touching $U^\eps$ is counted exactly once.
		We define the first-order corrector $\phi_p^\eps$ to be the unique minimizer of this functional; it is equivalently the unique solution to
		\begin{equation} \label{eq:elleqdefnu}
			\begin{cases}
				\mathcal{L}^\eps(\ell_p + \phi_p^\eps) = 0 & \text{in } U^\eps\,,\\
				\phi_p^\eps = 0 & \text{on } \eps\Zd \setminus U^\eps\,.
			\end{cases}
		\end{equation}
	\end{definition}
	
	\begin{remark}
		Let us make a few remarks about this definition:
		\begin{itemize}
			\item We subtract the term $|p \cdot z|^2$ to ensure that the sum converges absolutely for every function $w \colon \eps\Zd \to \R$ supported in $U^\eps$.
			\item The infimum is uniquely attained since we are minimizing a strictly convex quadratic functional on a finite-dimensional affine space.
			\item The functions $p \mapsto \phi_p^\eps$ and $p \mapsto \nu(U^\eps, p)$ are linear and quadratic (respectively).
			\item By using the function $w = 0$ as a test function in the definition of $\nu(U^\eps, p)$, we see that $\nu(U^\eps, p) \geq 0$. More precisely, by expanding the square in~\eqref{e.nu} with $w = \phi_p^\eps$ and using that $\phi_p^\eps$ solves~\eqref{eq:elleqdefnu} to cancel the cross term, we obtain
			\[
			\nu(U^\eps, p)
			=
			\frac{\eps^{2d}}{2\kappa_\eps}
			\sum_{\substack{x, z \in \eps\Zd, z \ne 0 \\ x \in U^\eps \text{ or } x+z \in U^\eps}}
			\a\left(\frac{x}{\eps},\frac{x+z}{\eps}\right)
			J(z)\bigl(\phi_p^\eps(x{+}z) - \phi_p^\eps(x)\bigr)^2.
			\]
			By ellipticity,
			\begin{equation} \label{eq:comparisonnuandphi}
				\frac{\lambda}{2} \|\phi_p^\eps\|_{H^1_{\mathrm{crit}}(U^\eps)}^2
				\le \nu(U^\eps, p)
				\le \frac{\lambda^{-1}}{2} \|\phi_p^\eps\|_{H^1_{\mathrm{crit}}(U^\eps)}^2.
			\end{equation}
		\end{itemize}
	\end{remark}
	
	The main result of this section provides a quantitative estimate on the size of the energy $\nu(U^\eps, p)$ and the $H^1_{\mathrm{crit}}(U^\eps)$-norm of the first-order corrector, showing that they are small when $\eps \ll 1$.
	
	\begin{proposition}[Convergence of the energy and of the corrector]
		\label{p.dirichlet}
		Under Assumption~\textup{(U)}, there exists a constant $C = C(d, U, \lambda) < \infty$ such that, for every $\eps \in (0,1)$ and every $p \in \Rd$,
		\begin{equation}
			\label{e.dirichlet.as}
			\nu(U^\eps, p) + \|\phi_p^\eps\|_{H^1_{\mathrm{crit}}(U^\eps)}^2
			\le \frac{C|p|^2}{|\ln\eps|}
			+ \mathcal{O}_1\left(\frac{C|p|^2\eps^d}{|\ln\eps|}\right)
			+ \mathcal{O}_2\left(C|p|^2 \eps^{d/2}\right).
		\end{equation}
	\end{proposition}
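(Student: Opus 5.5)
The plan is to bound $\nu(U^\eps,p)$ by the squared $H^{-1}_{\mathrm{crit}}$-norm of the right-hand side of the corrector equation, and then to bound that dual norm by an explicit flux that lives only on nearest-neighbour edges. By linearity we may assume $|p|=1$. By \eqref{eq:elleqdefnu}, $\phi_p^\eps$ solves $-\mathcal{L}^\eps \phi_p^\eps = f$ in $U^\eps$, where
\[
f(x) \coloneqq \frac{\eps^d}{\kappa_\eps}\sum_{z \in \eps\Zd\setminus\{0\}} \a\bigl(\tfrac{x}{\eps},\tfrac{x+z}{\eps}\bigr)J(z)\,p\cdot z .
\]
Since $z\mapsto p\cdot z J(z)$ is odd, we may replace $\a$ by the centred conductance $\a-\E[\a(0,e_1)]$ without changing $f$. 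Testing the equation against $\phi_p^\eps$ and using the expression for $\nu$ in the remark above, we get $\nu(U^\eps,p) = \eps^d\sum_x f\phi_p^\eps$. Combining this with \eqref{eq:comparisonnuandphi} gives
\[
\nu(U^\eps,p)+\|\phi^\eps_p\|^2_{H^1_{\mathrm{crit}}} \le C\|f\|^2_{H^{-1}_{\mathrm{crit}}(U^\eps)} .
\]
So everything reduces to a bound on $\|f\|_{H^{-1}_{\mathrm{crit}}}$.

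\textbf{Step 1: rerouting the flux.} The quantity $f$ is the divergence of the long-range excess flux $\mathbf{g}(x+z,x)=\bigl(\a(\tfrac{x}{\eps},\tfrac{x+z}{\eps})-\E\a\bigr)J(z)\,p\cdot z$, restricted to edges touching $U^\eps$. For each such edge $\{x,x+z\}$ I route the same current along the canonical path $\gamma_{x+z\to x}$ of Lemma~\ref{l.path}. The difference between the long-range flux and its routed version is solenoidal. Hence, by \eqref{eq:gissolenoidal}, for every $w$ supported in $U^\eps$,
\[
\eps^d\sum_x f(x)w(x) = \frac{\eps^{2d}}{\kappa_\eps}\sum_{e\ \text{n.n.}} G(e)\,\nabla_e w,
\qquad
G(e)\coloneqq\sum_{(x,z):\, e\in\gamma_{x+z\to x}} \bigl(\a-\E\a\bigr)J(z)\,p\cdot z .
\]
The nearest-neighbour part of $\|w\|_{H^1_{\mathrm{crit}}}^2$ carries weight $\frac{\eps^{2d}}{\kappa_\eps}J(\eps)=\frac{\eps^{d-2}}{\kappa_\eps}$. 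Cauchy--Schwarz therefore gives
\[
\|f\|^2_{H^{-1}_{\mathrm{crit}}} \le \frac{\eps^{2d}}{\kappa_\eps}\,\eps^{d+2}\sum_e G(e)^2 .
\]
Only edges within bounded distance of $U$ matter, plus tails from long jumps, and those tails should be harmless.

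\textbf{Step 2: expectation.} $G(e)$ is a sum of independent, centred, bounded terms. By Lemma~\ref{l.path}, at most $|z|_1/\eps$ pairs with increment $z$ use a given $e$. Hence
\[
\E G(e)^2 \le C\sum_{z\in\eps\Zd}\frac{|z|_1}{\eps}|z|^{-2d-2} \le C\eps^{-1}\eps^{-2d-1} .
\]
The series converges absolutely, which justifies the infinite sums via Proposition~\ref{prop:concentrationineq}. There are $O(\eps^{-d})$ relevant edges, so $\E\|f\|^2_{H^{-1}_{\mathrm{crit}}}\le C/\kappa_\eps\le C/|\ln\eps|$; this is the deterministic term in \eqref{e.dirichlet.as}. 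Edges far from $U$ only see long jumps, and their contribution decays. I would bound it by splitting the sum over edges dyadically in distance to $U$; this may require truncating at $|z|\le 1$ plus a separate tail estimate.

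\textbf{Step 3: concentration (the main obstacle).} The random variable $S\coloneqq\bigl(\sum_e G(e)^2\bigr)^{1/2}$ is a Euclidean norm of a vector that depends linearly on the conductances. Changing a single conductance $\a(x,x+z)$ moves this vector by at most $C J(z)|z|\,(|z|_1/\eps)^{1/2}$ in $\ell^2$. Summing the squares over all edges touching $U$ gives roughly $\eps^{-d}\cdot\eps^{-2d-2}$. I would apply Proposition~\ref{prop.McDiarmid}, after truncating to finitely many edges and passing to the limit, to get $S\le \E S+\O_2(C\eps^{-3d/2-1})$. Squaring and multiplying by the prefactor $\eps^{3d+2}/\kappa_\eps$ gives a Gaussian fluctuation term and a squared term. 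The cross term $\E S\cdot\O_2(\cdot)$ produces something of order $\eps^{d/2}/\kappa_\eps$, and the square of the $\O_2$ term becomes an $\O_1$ of order $\eps^{d}/\kappa_\eps$. This should match \eqref{e.dirichlet.as}, possibly with a slightly better prefactor than stated.

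The delicate part is making the Lipschitz constants sharp despite the infinite range of the kernel. A long edge far from $U$ may still have its rerouted path pass through edges near $U$. I expect the careful accounting of such edges, together with the truncation needed to apply McDiarmid's inequality to finitely many variables, to be where the real work lies.
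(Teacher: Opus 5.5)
Your first two steps are sound and are the dual form of the paper's Steps~1--2. Testing the corrector equation gives $\nu(U^\eps,p)+\|\phi_p^\eps\|^2_{H^1_{\mathrm{crit}}}\le C\|f\|^2_{H^{-1}_{\mathrm{crit}}}$. Your $G(e)$ is, up to orientation, the paper's $\Sigma_p$ on nearest-neighbour edges, and $\frac{\eps^{3d+2}}{\kappa_\eps}\sum_e G(e)^2$ is exactly the paper's $F_\eps(p)$, with $\E F_\eps(p)\le C/\kappa_\eps$. One simplification: since $w$ vanishes off $U^\eps$, $\nabla_e w=0$ unless $e$ touches $U^\eps$. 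So only $O(\eps^{-d})$ edges enter, and no tail analysis for distant edges is needed.

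The gap is in Step~3. Your bounded differences $a_{(x,z)}=CJ(z)|z|\,(|z|_1/\eps)^{1/2}$ do satisfy $\sum a_{(x,z)}^2\asymp\eps^{-3d-2}$, but this is the same order as $\E S^2$. For a Euclidean norm of a linear function of independent variables, the sum of squared bounded differences is essentially the trace of the covariance. McDiarmid therefore gives $S\le\E S+\O_2(C\eps^{-3d/2-1})$, with fluctuations as large as the mean.

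After squaring and multiplying by $\eps^{3d+2}/\kappa_\eps$, the cross term $\frac{\eps^{3d+2}}{\kappa_\eps}\cdot\eps^{-3d/2-1}\cdot\eps^{-3d/2-1}$ and the squared term are both of order $1/\kappa_\eps$. They are not of order $\eps^{d/2}/\kappa_\eps$ and $\eps^d/\kappa_\eps$ as you wrote, so you only obtain $\nu\le\O_1(C/|\ln\eps|)$. That is far weaker than \eqref{e.dirichlet.as}. It gives $\P[\nu>M/|\ln\eps|]\le e^{-M/C}$ with no decay in $\eps$. The statement instead makes $\P[\nu>2C/|\ln\eps|]$ of order $\exp(-c\,\eps^{-d}|\ln\eps|^{-2})$. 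This smallness is what Section~\ref{sec:twoscale} needs to get $\P[A_m^c]\le Ce^{-cm}$ and an integrable minimal scale.

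You need a sharper concentration input; two routes work.

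\emph{Talagrand.} Use Talagrand's inequality for convex Lipschitz functions of bounded independent variables. $S$ is convex in the conductances with Lipschitz constant $\|V\|_{\mathrm{op}}$, where $V$ maps the centred conductances to $G$. A row-sum bound on $VV^*$ combined with Lemma~\ref{l.path} gives $\|V\|^2_{\mathrm{op}}\le C\eps^{-2}\sum_{z\in\eps\Zd\setminus\{0\}}|z|^{-2d}\le C\eps^{-2d-2}$. This recovers exactly the numbers you claimed.

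\emph{The paper's route.} The paper applies bounded differences to the squared energy, pairing $\sup_e|\Sigma_p(e)|$ against the $\ell^1$ norm of the perturbation.
\begin{itemize}
\item It splits at $|v|=1$ and uses the deterministic bound $|\Sigma_p^{\mathrm{SR}}|\le C\eps^{-d-1}|\ln\eps|$.
\item Then one short-range conductance changes $F^{\mathrm{SR}}_\eps$ by at most $C\eps^{2d}|v|^{-d}$. McDiarmid on these finitely many variables gives $\O_2(C\eps^{d/2})$; the $|\ln\eps|$ cancels $\kappa_\eps$, which is why this term carries no $1/|\ln\eps|$.
\item The long-range part is handled edge by edge by Hoeffding: $\Sigma_p^{\mathrm{LR}}\le\O_2(C\eps^{-d/2-1})$, which gives $\O_1(C\eps^d/\kappa_\eps)$.
\end{itemize}
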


	\begin{proof}[Proof of Proposition~\ref{p.dirichlet}]
		We fix a slope $p \in \Rd$ and $\eps \in (0 ,1)$ and assume, without loss of generality, that $\eps \leq 1/2$ and that $\E[\a(0,e_1)] = 1$. We additionally note that, by the inequality~\eqref{eq:comparisonnuandphi}, it is enough to prove the upper bound~\eqref{e.dirichlet.as} for the energy $\nu(U^\eps , p)$. We decompose the argument into three steps.
		
		\medskip
		
		\emph{Step~1: An upper bound for the energy.} 
		
		\medskip
		
		In this step, we prove the following inequality: there exists a constant $C = C(d, U, \lambda) < \infty$ such that for every vector field $\mathbf{g} : \eps\Zd \times \eps\Zd \to \R$ which is solenoidal in $U^\eps$,
		\begin{equation} \label{eq:upperboundnusolenoidal}
			\nu(U^\eps, p) \leq \frac{C \eps^{2d}}{\kappa_\eps} \sum_{\substack{x \in U^\eps \\ z \in \eps\Zd \setminus \{0\}}} \frac{1}{J(z)} \left( \mathbf{g}(x+z, x) - \a\left(\frac{x+z}{\eps}, \frac{x}{\eps}\right) J(z)(p \cdot z) \right)^2.
		\end{equation}
		To prove this inequality, we fix such a vector field $\mathbf{g}$ and apply Young's inequality to write, for every function $w : \eps\Zd \to \R$ supported in $U^\eps$ and every pair $x, z \in \eps\Zd$ with $z \neq 0$,
		\begin{equation*}
			\bigl(p \cdot z + w(x{+}z) - w(x)\bigr)^2 \geq 2 \frac{\mathbf{g}(x{+}z,x)}{\a\left(\frac{x+z}{\eps}, \frac{x}{\eps}\right) J(z)} \bigl(p \cdot z + w(x{+}z) - w(x)\bigr)
			-
			\frac{\mathbf{g}(x{+}z,x)^2}{\a\left(\frac{x+z}{\eps}, \frac{x}{\eps}\right)^2 J(z)^2}\,.
		\end{equation*}
		From this inequality, we deduce that
		\begin{align*}
			\lefteqn{\a\left(\frac{x+z}{\eps}, \frac{x}{\eps}\right) J(z) \left[ \bigl(p \cdot z + w(x{+}z) - w(x)\bigr)^2 - |p \cdot z|^2 \right]} \qquad & \\
			& \ge
			2 \mathbf{g}(x{+}z,x) \bigl(p \cdot z + w(x{+}z) - w(x)\bigr)
			-
			\frac{\mathbf{g}(x{+}z,x)^2}{\a\left(\frac{x+z}{\eps}, \frac{x}{\eps}\right) J(z)} - \a\left(\frac{x+z}{\eps}, \frac{x}{\eps}\right) J(z) |p \cdot z|^2 \\
			& \geq 2 \mathbf{g}(x{+}z,x) \bigl(w(x{+}z) - w(x)\bigr) - \frac{1}{\a\left(\frac{x+z}{\eps}, \frac{x}{\eps}\right) J(z)} \left( \mathbf{g}(x{+}z, x) - \a\left(\frac{x+z}{\eps}, \frac{x}{\eps}\right) J(z)(p \cdot z) \right)^2.
		\end{align*}
		Summing over the edge set in~\eqref{e.nu} (with the factor $1/2$) and using that $\mathbf{g}$ is solenoidal (i.e., using~\eqref{eq:gissolenoidal} to cancel the cross term over the full edge set), we obtain
		\begin{multline*}
			\frac{1}{2}\sum_{\substack{x, z \in \eps\Zd, z \ne 0 \\ x \in U^\eps \text{ or } x{+}z \in U^\eps}} \a\left(\frac{x}{\eps}, \frac{x+z}{\eps}\right) J(z) \left[ \bigl(p \cdot z + w(x{+}z) - w(x)\bigr)^2 - |p \cdot z|^2 \right] \\
			\geq - \frac{1}{2}\sum_{\substack{x, z \in \eps\Zd, z \ne 0 \\ x \in U^\eps \text{ or } x{+}z \in U^\eps}} \frac{1}{\a\left(\frac{x+z}{\eps}, \frac{x}{\eps}\right) J(z)} \left( \mathbf{g}(x{+}z, x) - \a\left(\frac{x+z}{\eps}, \frac{x}{\eps}\right) J(z)(p \cdot z) \right)^2.
		\end{multline*}
		The left-hand side is the functional in~\eqref{e.nu} (up to the prefactor $\eps^{2d}/\kappa_\eps$), and the right-hand side does not depend on $w$.
		Taking the infimum over $w$ on the left-hand side and enlarging the sum on the right to $x \in U^\eps$ (which at most doubles each term), we obtain
		\begin{equation*}
			\nu(U^\eps , p)  \leq \frac{C \eps^{2d}}{\kappa_\eps} \sum_{\substack{x \in U^\eps \\ z \in \eps \Zd \setminus \{0\}}}  \frac{1}{J(z)}\left( \mathbf{g}(x + z, x) - \a\left( \frac{x + z}{\eps} , \frac{x}{\eps} \right) J(z) p \cdot z  \right)^2.
		\end{equation*}
		The proof of~\eqref{eq:upperboundnusolenoidal} is complete.
		
		\medskip
		
		\emph{Step~2: Construction of a solenoidal field.}
		
		\medskip
		
		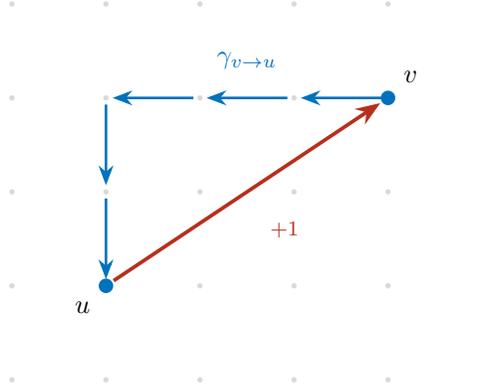
\begin{figure}[ht]
			\centering
			\begin{tikzpicture}[scale=1.25,>=Stealth,
				dot/.style={circle,fill=black!15,inner sep=0pt,minimum size=2pt},
				vert/.style={circle,inner sep=0pt,minimum size=5.5pt},
				]
				\foreach \x in {-1,0,...,4} {
					\foreach \y in {-1,0,...,3} {
						\node[dot] at (\x,\y) {};
					}
				}
				\draw[->,BrickRed,line width=1.4pt,shorten >=3.5pt,shorten <=3.5pt]
				(0,0) -- (3,2);
				\node[font=\scriptsize,BrickRed] at (1.9,0.6) {$+1$};
				\foreach \ax/\ay/\bx/\by in {3/2/2/2, 2/2/1/2, 1/2/0/2, 0/2/0/1, 0/1/0/0} {
					\draw[->,RoyalBlue,line width=1pt,shorten >=2.5pt,shorten <=2.5pt]
					(\ax,\ay) -- (\bx,\by);
				}
				\node[vert,fill=RoyalBlue,label={[font=\small]below left:$u$}] at (0,0) {};
				\node[vert,fill=RoyalBlue,label={[font=\small]above right:$v$}] at (3,2) {};
				\node[font=\small,RoyalBlue] at (1.5,2.4) {$\gamma_{v \to u}$};
			\end{tikzpicture}
			\caption{The divergence-free cycle $C_{u+v,u}$ for $u = (0,0)$ and $v = (3,2)$ in $\Z^2$.
				One unit of flow travels from $u$ to $u+v$ along the long-range edge (red, $+1$) and returns via the nearest-neighbour path $\gamma_{u+v \to u}$ (blue), which decreases coordinates lexicographically.
				The net flow at every vertex is zero.}
			\label{fig:cycle}
		\end{figure}
		
		Fix a total order $\prec$ on $\eps\Zd$ (for instance, the lexicographic order), and define
		\[
		\mathcal{I}_\eps \coloneqq \bigl\{(u,v) \in U^\eps \times (\eps\Zd \setminus \{0\}) : u + v \notin U^\eps \text{ or } u \prec u + v\bigr\}.
		\]
		Every unordered edge with at least one endpoint in $U^\eps$ is represented exactly once in $\mathcal{I}_\eps$.
		
		For each $(u,v) \in \mathcal{I}_\eps$, define the antisymmetric cycle $C_{u+v,u}$ by
		\[
		C_{u+v,u}(u+v,u) = +1, \qquad C_{u+v,u}(u,u+v) = -1,
		\]
		and, for each nearest-neighbour edge of the canonical path $\gamma_{u+v \to u}$ traversed from $a$ to $b$, set $C_{u+v,u}(a,b) = -1$ and $C_{u+v,u}(b,a) = +1$, so that the return flow travels from $u+v$ back to $u$ along $\gamma_{u+v \to u}$.
		All other entries are zero; by construction, $C_{u+v,u}$ is divergence free on $\eps\Zd$.
		When $|v| = \eps$, the long-range edge $\{u, u+v\}$ coincides with the single nearest-neighbour edge of $\gamma_{u+v \to u}$, and the two contributions cancel, so $C_{u+v,u}$ is identically zero.
		
		We define the solenoidal field $\mathbf{g}_p$, for $x \in U^\eps$ and $z \in \eps\Zd \setminus \{0\}$, by
		\begin{equation}
			\label{e.gp.def}
			\mathbf{g}_p(x+z,x) \coloneqq J(z)(p \cdot z) + \sum_{(u,v) \in \mathcal{I}_\eps} \Bigl(\a\left(\frac{u+v}{\eps},\frac{u}{\eps}\right) - 1\Bigr) J(v)(p \cdot v) C_{u+v,u}(x+z,x).
		\end{equation}
		The background field $J(z)(p \cdot z)$ is divergence free by the antisymmetry $z \mapsto -z$, and each cycle $C_{u+v,u}$ is divergence free by construction, so $\mathbf{g}_p$ is divergence free on $U^\eps$.
		
		For every $x \in U^\eps$ and every $z \in \eps\Zd$ with $|z| > \eps$, the unordered edge $\{x, x+z\}$ has a unique representative in $\mathcal{I}_\eps$, so
		\begin{equation} \label{eq:nonnearestg}
			\mathbf{g}_p(x+z,x) = \a\left(\frac{x+z}{\eps},\frac{x}{\eps}\right) J(z)(p \cdot z) \qquad (|z| > \eps).
		\end{equation}
		On nearest-neighbour edges, the second term in~\eqref{e.gp.def} is a linear combination of independent, mean-zero, bounded random variables.
		By the Hoeffding inequality (Proposition~\ref{prop:concentrationineq}) and Lemma~\ref{l.path}, for every $x \in U^\eps$ and $z \in \eps\Zd$ with $|z| = \eps$,
		\begin{equation} \label{ineqforgp}
			\sum_{(u,v) \in \mathcal{I}_\eps} \left|J(v)(p \cdot v) C_{u+v,u}(x+z,x)\right|^2 \le C|p|^2 \eps^{-2d-2},
		\end{equation}
		and therefore $|\mathbf{g}_p(x+z,x)| \le \mathcal{O}_2(C|p|\eps^{-d-1})$.
		
		Using~\eqref{eq:upperboundnusolenoidal} with $\mathbf{g} = \mathbf{g}_p$ and~\eqref{eq:nonnearestg}, all non-nearest-neighbour contributions cancel, so
		\[
		\nu(U^\eps, p) \le \frac{C\eps^{2d}}{\kappa_\eps} \sum_{\substack{x \in U^\eps \\ |z| = \eps}} \frac{1}{J(z)} \left(\mathbf{g}_p(x+z,x) - \a\left(\frac{x+z}{\eps},\frac{x}{\eps}\right) J(z)(p \cdot z)\right)^2.
		\]
		Define, for $x \in U^\eps$ and $|z| = \eps$,
		\begin{equation} \label{def:defsigmap}
			\Sigma_p(x+z,x) \coloneqq \mathbf{g}_p(x+z,x) - \a\left(\frac{x+z}{\eps},\frac{x}{\eps}\right) J(z)(p \cdot z).
		\end{equation}
		Then $|\Sigma_p(x+z,x)| \le \mathcal{O}_2(C|p|\eps^{-d-1})$, and since $J(z) = \eps^{-d-2}$ when $|z| = \eps$,
		\begin{equation} \label{eq:defnepsstep2}
			\nu(U^\eps, p) \le \frac{C\eps^{3d+2}}{\kappa_\eps} \sum_{\substack{x \in U^\eps \\ |z| = \eps}} \Sigma_p(x+z,x)^2.
		\end{equation}
		
		\medskip
		
		\emph{Step~3: Estimating the energy of the solenoidal field.}
		Set
		\[
		F_\eps(p) \coloneqq \frac{\eps^{3d+2}}{\kappa_\eps} \sum_{\substack{x \in U^\eps \\ |z| = \eps}} \Sigma_p(x+z,x)^2.
		\]
		By~\eqref{eq:defnepsstep2}, it suffices to bound $F_\eps(p)$.
		Using $|\Sigma_p(x+z,x)| \le \mathcal{O}_2(C|p|\eps^{-d-1})$ and~\eqref{eq:upperboundO2norm}, and noting that $|U^\eps| \asymp \eps^{-d}$, we obtain
		\begin{equation} \label{eq:expectationF}
			\E[F_\eps(p)] \le \frac{C\eps^{3d+2}}{\kappa_\eps} \cdot \eps^{-d} \cdot C|p|^2\eps^{-2d-2} \le \frac{C|p|^2}{\kappa_\eps} \le \frac{C|p|^2}{|\ln\eps|}\,.
		\end{equation}
		
		We split $\Sigma_p = \Sigma_p^{\mathrm{SR}} + \Sigma_p^{\mathrm{LR}}$, where, for $x \in U^\eps$ and $|z| = \eps$,
		\[
		\Sigma_p^{\mathrm{LR}}(x+z,x) \coloneqq \sum_{\substack{(u,v) \in \mathcal{I}_\eps \\ |v| \ge 1}} \Bigl(\a\left(\frac{u+v}{\eps},\frac{u}{\eps}\right) - 1\Bigr) J(v)(p \cdot v) C_{u+v,u}(x+z,x),
		\]
		and $\Sigma_p^{\mathrm{SR}}(x+z,x) \coloneqq \Sigma_p(x+z,x) - \Sigma_p^{\mathrm{LR}}(x+z,x)$.
		Then $F_\eps(p) \le 2 F_\eps^{\mathrm{SR}}(p) + 2 F_\eps^{\mathrm{LR}}(p)$, where $F_\eps^{\mathrm{SR}}$ and $F_\eps^{\mathrm{LR}}$ are defined by replacing $\Sigma_p$ with $\Sigma_p^{\mathrm{SR}}$ and $\Sigma_p^{\mathrm{LR}}$ respectively.
		
		For the long-range part, Hoeffding's inequality and Lemma~\ref{l.path} give
		\[
		\sum_{\substack{(u,v) \in \mathcal{I}_\eps \\ |v| \ge 1}} |J(v)(p \cdot v) C_{u+v,u}(x+z,x)|^2 \le C|p|^2\eps^{-d-2},
		\]
		hence $|\Sigma_p^{\mathrm{LR}}(x+z,x)| \le \mathcal{O}_2(C|p|\eps^{-d/2-1})$ and $|\Sigma_p^{\mathrm{LR}}(x+z,x)|^2 \le \mathcal{O}_1(C|p|^2\eps^{-d-2})$.
		Therefore
		\begin{equation} \label{eq:estimateLRterm}
			F_\eps^{\mathrm{LR}}(p) \le \mathcal{O}_1\left(\frac{C|p|^2\eps^d}{\kappa_\eps}\right).
		\end{equation}
		
		For the short-range part, Lemma~\ref{l.path} and Proposition~\ref{sumesonthelattice} (with $\alpha = d$) imply
		\begin{equation} \label{eq:upperboundsigmaSR}
			|\Sigma_p^{\mathrm{SR}}(x+z,x)| \le C|p|\eps^{-d-1}|\ln\eps|.
		\end{equation}
		For an edge $e$, write $\delta_{\a(e)} G$ for the supremum of $|G(\a) - G(\a')|$ over conductance configurations $\a,\a'$ differing only on $e$.
		For $\eps \le |v| < 1$, changing the conductance $\a((u+v)/\eps, u/\eps)$ changes $\Sigma_p^{\mathrm{SR}}(x+z,x)$ by at most
		\begin{equation} \label{eq:influenceshortrangeedges}
			\delta_{\a((u+v)/\eps, u/\eps)} \Sigma_p^{\mathrm{SR}}(x+z,x) \le C|p||v|^{-(d+1)} \mathbf{1}_{\{\{x,x+z\} \in \gamma_{u+v \to u}\}}\,.
		\end{equation}
		Combining~\eqref{eq:upperboundsigmaSR} and~\eqref{eq:influenceshortrangeedges}, and using $|\gamma_{u+v \to u}| = |v|_1/\eps$, we obtain
		\[
		\delta_{\a((u+v)/\eps, u/\eps)} F_\eps^{\mathrm{SR}}(p) \le C|p|^2\eps^{2d}|v|^{-d}.
		\]
		Hence
		\[
		\sum_{\substack{u \in U^\eps,\; v \in \eps\Zd \\ \eps \le |v| < 1}} \bigl(\delta_{\a((u+v)/\eps, u/\eps)} F_\eps^{\mathrm{SR}}(p)\bigr)^2 \le C|p|^4\eps^{4d} \sum_{\substack{u \in U^\eps,\; v \in \eps\Zd \\ \eps \le |v| < 1}} |v|^{-2d} \le C|p|^4\eps^d.
		\]
		Indexing each short-range unordered edge by its unique representative $(u,v) \in \mathcal{I}_\eps$ with $\eps \le |v| < 1$ (see the definition of $\mathcal{I}_\eps$ above) gives an independent family $\{\a((u+v)/\eps, u/\eps) : (u,v) \in \mathcal{I}_\eps,\; \eps \le |v| < 1\}$; applied to this family, McDiarmid's inequality now yields
		\[
		F_\eps^{\mathrm{SR}}(p) - \E[F_\eps^{\mathrm{SR}}(p)] \le \mathcal{O}_2(C|p|^2\eps^{d/2}).
		\]
		
		Since $\Sigma_p^{\mathrm{SR}}$ and $\Sigma_p^{\mathrm{LR}}$ depend on disjoint families of conductances (edges with $|v| < 1$ and $|v| \ge 1$ respectively), the cross terms vanish in expectation, so $\E[F_\eps(p)] = \E[F_\eps^{\mathrm{SR}}(p)] + \E[F_\eps^{\mathrm{LR}}(p)]$.
		In particular, $\E[F_\eps^{\mathrm{SR}}(p)] \le \E[F_\eps(p)]$.
		Combining with~\eqref{eq:expectationF} and~\eqref{eq:estimateLRterm} gives
		\[
		F_\eps(p) \le \frac{C|p|^2}{|\ln\eps|} + \mathcal{O}_1\left(\frac{C|p|^2\eps^d}{|\ln\eps|}\right) + \mathcal{O}_2(C|p|^2\eps^{d/2}).
		\]
		By~\eqref{eq:defnepsstep2}, the same bound holds for $\nu(U^\eps, p)$, and the comparison~\eqref{eq:comparisonnuandphi} yields the estimate for $\|\phi_p^\eps\|_{H^1_{\mathrm{crit}}(U^\eps)}^2$.
	\end{proof}
	
	\begin{remark}[Matching lower bound on $\nu$]\label{rem:nu-lower-bound}
		Assume $\Var(\a(0,e_1)) > 0$. We claim that for every $p \in \R^d$ and every
		sufficiently small $\eps > 0$,
		\begin{equation}\label{eq:nu-lower}
			\E[\nu(U^\eps,p)] \ge \frac{c |p|^2}{\kappa_\eps}\,,
		\end{equation}
		where $c > 0$ depends on $d$, $\lambda$, $U$, and the law of $\a$. By
		Proposition~\ref{sumesonthelattice}, the right-hand side is of order
		$|p|^2/|\ln \eps|$, matching the upper bound. 
		
		For small
		$\eps$, we have $|U^\eps|
		\gtrsim \eps^{-d}$.
		Let
		\[
		B_p^\eps(x) \coloneqq \sum_{z\in\eps\Z^d\setminus\{0\}}\bigl(
		\a(x/\eps,(x+z)/\eps)- \E[\a(0,e_1)]\bigr) J(z) (p\cdot z)\,.
		\]
		The symmetry $\sum_{z \in \eps\Z^d \setminus \{0\}} J(z)z = 0$ and the corrector equation at $x \in
		U^\eps$ give
		\[
		\sum_{z\in\eps\Z^d\setminus\{0\}}  \a(x/\eps,(x+z)/\eps) J(z) (\phi_p^\eps(
		x+z)-\phi_p^\eps(x))
		= -B_p^\eps(x)\,.
		\]
		Cauchy--Schwarz together with
		$\sum_{z \in \eps\Z^d \setminus \{0\}}  \a(x/\eps,(x+z)/\eps) J(z) \le
		C\eps^{-d-2}$ yields
		\[
		\sum_{z\in\eps\Z^d\setminus\{0\}}  \a(x/\eps,(x+z)/\eps) J(z)
		\bigl(\phi_p^\eps(x+z)-\phi_p^\eps(x)\bigr)^2
		\ge c \eps^{d+2} B_p^\eps(x)^2.
		\]
		Retaining only $z = \pm\eps e_i$ in the variance sum,
		\[
		\E\bigl[B_p^\eps(0)^2\bigr]
		= \Var( \a(0,e_1))\sum_{z\in\eps\Z^d\setminus\{0\}}J(z)^2(p\cdot z)^2
		\ge 2\Var( \a(0,e_1)) \eps^{-2d-2}|p|^2.
		\]
		Restricting the sum defining $\nu$ to $x \in U^\eps$, taking
		expectation, and
		combining the last two displays gives~\eqref{eq:nu-lower}.
	\end{remark}
	
	\section{Poincar\'e inequality for the critical kernel} \label{sec:poincare}
	
	Throughout this section, we fix a bounded Lipschitz domain $U \subseteq \Rd$ and $\eps \in (0,1)$.
	The main result is a Poincar\'e inequality for the critical kernel~$J$: the $L^2(U^\eps)$ norm of a function vanishing outside $U^\eps$ is controlled by its $H^1_{\mathrm{crit}}(U^\eps)$ norm~\eqref{e.H1crit}.
	The key mechanism is that each triadic shell of the kernel contributes equally to the energy---a property special to the critical exponent---so summing over the $\asymp \left| \ln \eps \right|$ shells between lattice spacing and domain diameter produces a constant.
	
	\begin{proposition}[Poincar\'e inequality]
		\label{p.poincare}
		Let $d \ge 1$ be an integer and let $U \subset \Rd$ be a bounded Lipschitz domain.
		Then there exists $C = C(d, \operatorname{diam} U) < \infty$ such that for every $\eps \in (0,1)$ and every $h \colon \eps\Zd \to \R$ with $h = 0$ on $\eps\Zd \setminus U$,
		\begin{equation}
			\label{e.poincare}
			\|h\|_{L^2(U^\eps)}^2
			\le C \|h\|_{H^1_{\mathrm{crit}}(U^\eps)}^2\,.
		\end{equation}
	\end{proposition}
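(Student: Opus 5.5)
The plan is a single-scale averaging argument: compare $h$ with its average over a ball of radius comparable to $R = \operatorname{diam} U$. Since $h$ vanishes outside $U$, such an average is small. To beat the factor $1/\kappa_\eps$ in \eqref{e.H1crit}, I will use every dyadic (or triadic) shell between $\eps$ and $R$, each contributing a comparable amount.

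Fix $R \ge 1 + \operatorname{diam} U$. For each $x \in U^\eps$ and each scale $r \in [\eps, R]$, let $A_r \coloneqq \{z \in \eps\Zd : r < |z| \le 2r\}$. If $r \ge 2\operatorname{diam} U$, then $x + z \notin U$ for every $z \in A_r$, so $h(x+z) = 0$ and
\[
h(x)^2 = \frac{1}{|A_r|}\sum_{z\in A_r}\bigl(h(x+z)-h(x)\bigr)^2 \le C r^{-d}\eps^{d}\sum_{z \in A_r}\bigl(h(x+z)-h(x)\bigr)^2.
\]
This alone gives only one shell, which costs a factor $\kappa_\eps$. The key step is to average the trivial bound over \emph{all} scales, not just the large ones. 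For $\eps \le r \le R$ and $z \in A_r$ we have $J(z) \asymp r^{-d-2}$, so the shell $A_r$ has weight $\eps^{d}\sum_{z\in A_r} J(z)|z|^2 \asymp 1$. This is the critical-exponent feature.

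Concretely, I would write $h(x) = (h(x) - h(x+z)) + h(x+z)$ and square it, sum over $x$, and weight $z$ by $\eps^d J(z)|z|^2 / \kappa_\eps$ over $\eps \le |z| \le R$. The total weight is $\asymp 1$ by Proposition~\ref{sumesonthelattice} (with $\kappa_R$-type sums $\asymp \kappa_\eps$ for $\eps$ small). This gives
\[
\eps^d\sum_x h(x)^2 \le \frac{C\eps^{d}}{\kappa_\eps}\sum_{|z|\le R}\eps^d J(z)|z|^2\sum_x \Bigl[2\bigl(h(x+z)-h(x)\bigr)^2 + 2h(x+z)^2\Bigr].
\]
The second term reproduces $\|h\|_{L^2}^2$ and does not close, so a direct approach fails. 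The fix I would try first is a telescoping, or chaining, in $z$. Pick for each $x$ a direction $e$ and the points $x + 2^k \eps e$, $k = 0,\dots,K$, with $2^K\eps \approx 2R$, so that $h(x + 2^K\eps e) = 0$. Telescoping gives $h(x) = -\sum_k (h(x+2^{k+1}\eps e) - h(x+2^k\eps e))$. Apply Cauchy--Schwarz with weights $2^{k}$ (geometric in scale), then average the direction $e$ over the unit sphere, i.e.\ the increments over the shell $A_{2^k\eps}$ with a lattice averaging. This bounds $h(x)^2$ by $\sum_k (2^k\eps)^{-?}$ times shell energies, with the bookkeeping arranged so that scale $k$ gets weight $\asymp (2^k\eps)^{2}\cdot$ (shell energy density). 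Since each shell energy is normalized by $J \asymp |z|^{-d-2}$, this equals $J$-weighted energy times $|z|^2$, and $|z|^2 \le 4R^2$.

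The main obstacle is the bookkeeping. Plain Cauchy--Schwarz over $K \asymp |\ln\eps|$ steps loses a factor $K$. A factor $K \asymp \kappa_\eps$ is exactly what the normalization $1/\kappa_\eps$ in \eqref{e.H1crit} absorbs, however, so unweighted Cauchy--Schwarz should suffice:
\[
h(x)^2 \le K\sum_{k}\bigl(h(x+2^{k+1}\eps e) - h(x+2^k\eps e)\bigr)^2.
\]
After averaging over $x$ and over the directions (replacing the fixed increment $2^k\eps e$ by an average over $z \in A_{2^k\eps}$ via translation invariance of the sum over $x$), each term becomes at most $C\eps^{d}(2^k\eps)^{-d}\sum_{z\in A_{2^k\eps}}\sum_x (h(x+z)-h(x))^2$. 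This is at most $C(2^k\eps)^2\eps^{d}\sum_{z \in A_{2^k\eps}}J(z)\sum_x(\cdot)^2 \le CR^2\,\eps^d\sum_{z\in A_{2^k\eps}} J(z)\sum_x(\cdot)^2$. Summing over $k$ gives
\[
\|h\|_{L^2}^2 \le CK R^2\,\frac{\eps^{2d}}{1}\sum_{x,z}J(z)\bigl(h(x+z)-h(x)\bigr)^2\cdot\eps^{-d}.
\]
Since $K/\kappa_\eps \le C$, this is at most $CR^2\|h\|_{H^1_{\mathrm{crit}}}^2$. I would double-check the $\eps$-powers and the shell-averaging step, which needs the increments $2^k\eps e$ to be replaced by a genuine shell average. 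That replacement is done by a two-step path $x \to x+y \to x+2^k\eps e$ with $y$ ranging over the shell, costing only a constant factor.
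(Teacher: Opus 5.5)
The gap is in your last step, and it is a problem with the structure of the argument, not a bookkeeping slip. Done correctly, your chain gives the following (the stray $\eps^{-d}$ disappears once you account for the $\eps^d$ in $\|\cdot\|_{L^2(U^\eps)}$):
\[
\|h\|_{L^2(U^\eps)}^2 \le C K R^2\, \eps^{2d}\sum_{x,z\in\eps\Zd} J(z)\bigl(h(x+z)-h(x)\bigr)^2 = C K R^2\,\kappa_\eps\,\|h\|_{H^1_{\mathrm{crit}}(U^\eps)}^2 .
\]
So the constant is of order $K\kappa_\eps\asymp|\ln\eps|^2$, not $K/\kappa_\eps$. The normalization $1/\kappa_\eps$ in \eqref{e.H1crit} makes the right side of \eqref{e.poincare} \emph{smaller}. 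Compared with the plain bound $\|h\|_{L^2}^2\lesssim R^2\eps^{2d}\sum_{x,z}J(z)(h(x+z)-h(x))^2$, you must \emph{gain} a factor $\kappa_\eps$; you cannot absorb a loss of $K$.

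Choosing weights does not rescue a chain that runs \emph{through} the scales. Set $r_k=2^k\eps$ and $E_k\coloneqq\eps^{2d}\sum_x\sum_{z\in A_{r_k}}J(z)(h(x+z)-h(x))^2$. The best such a chain yields is the triangle inequality $\|h\|_{L^2}\le C\sum_k r_k E_k^{1/2}$. Since $\sum_k r_k^2\asymp R^2$, this gives only $\|h\|_{L^2}^2\le CR^2\sum_k E_k\le CR^2\kappa_\eps\|h\|^2_{H^1_{\mathrm{crit}}}$. That cannot be improved in terms of the $E_k$ alone: put all the energy in the top shell. A single chain uses each shell once and never sees the real mechanism, which is that \emph{every} shell separately controls $\|h\|_{L^2}$.

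Your ingredients do suffice once you telescope \emph{within} one scale.
\begin{itemize}
\item Fix $k$ with $r_k\le R$. Chain $x, x+r_k e_1,\ldots,x+N_k r_k e_1$ with $N_k=\lceil R/r_k\rceil$ steps, all of length $r_k$, so the endpoint lies outside $U$.
\item Cauchy--Schwarz and translation invariance give $\sum_x h(x)^2\le N_k^2\sum_y (h(y+r_k e_1)-h(y))^2$.
\item Your two-step shell averaging gives $\eps^d\sum_y(h(y+r_k e_1)-h(y))^2\le C r_k^2 E_k$, with $E_k$ taken on a slightly fattened shell. For $r_k$ comparable to $\eps$, use $J(r_k e_1)\asymp\eps^{-d-2}$ directly.
\item Since $N_k^2 r_k^2\asymp R^2$, this yields $\|h\|_{L^2}^2\le CR^2 E_k$ with a constant independent of $k$. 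This is exactly where the critical exponent enters.
\item Summing over the $K\asymp\log(R/\eps)$ boundedly overlapping shells gives $K\|h\|_{L^2}^2\le CR^2\kappa_\eps\|h\|^2_{H^1_{\mathrm{crit}}}$, and $\kappa_\eps\le C(R)K$ concludes.
\end{itemize}

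This is the paper's structure. Lemma~\ref{l.single.scale} proves the single-scale bound by iterating an averaging operator $T_k$ over a forward slab $S_k$, $N=\lceil R/(\eps 3^k)\rceil$ times, using Jensen contraction and the telescoping identity $h=\sum_{n<N}T_k^n(I-T_k)h$ on $U^\eps$. This also avoids your direction-averaging step. The proof of Proposition~\ref{p.poincare} then sums this bound over the triadic shells.
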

	For the nearest-neighbour random walk on $\eps\Zd$, the standard Poincar\'e inequality on a domain of diameter $R$ reads $\|h\|_{L^2}^2 \le CR^2 \eps^d \sum_{x \in U^\eps} \sum_{z \in \eps\Zd, |z|=\eps} |h(x+z) - h(x)|^2$, with no logarithmic improvement. The critical kernel improves this by a factor of $|\ln\eps|$, which is absorbed into the normalization $\kappa_\eps$ of the $H^1_{\mathrm{crit}}$ norm~\eqref{e.H1crit}, yielding a constant independent of~$\eps$.
	
	The proof rests on a single-scale averaging estimate that is uniform across all triadic scales.
	
	\subsection{Single-scale estimate}
	
	For an integer $k \ge 0$ with $\eps 3^k \le \operatorname{diam}(U)$, the \emph{forward slab at scale~$k$} is
	\begin{equation}
		\label{e.Sk}
		S_k \coloneqq \bigl\{z \in \eps\Zd : \eps 3^k \le z_1 < 2\eps 3^k, |z_i| < \eps 3^k \text{ for } i = 2, \ldots, d\bigr\}\,.
	\end{equation}
	The slab $S_k$ lies inside the $k$-th triadic shell $\{z \in \eps\Zd : \eps 3^k \le \|z\|_\infty < \eps 3^{k+1}\}$, contains $\asymp 3^{kd}$ lattice points, and has total $J$-mass
	\begin{equation}
		\label{e.slab.mass}
		\sum_{z \in S_k} J(z) \asymp (\eps 3^k)^{-2} \eps^{-d}\,,
	\end{equation}
	since each element satisfies $J(z) \asymp (\eps 3^k)^{-(d+2)}$.
	
	\begin{lemma}[Single-scale estimate]
		\label{l.single.scale}
		There exists $C = C(d) < \infty$ such that for every bounded Lipschitz domain $U \subset \Rd$, every $\eps \in (0,1)$, every integer $k \ge 0$ with $\eps 3^k \le \operatorname{diam}(U)$, and every $h \colon \eps\Zd \to \R$ with $h = 0$ on $\eps\Zd \setminus U$,
		\begin{equation}
			\label{e.single.scale}
			\|h\|_{L^2(U^\eps)}^2
			\le
			C \operatorname{diam}(U)^2
			\eps^{2d}
			\sum_{x \in \eps\Zd}
			\sum_{\substack{z \in \eps\Zd \\ \eps 3^k \le \|z\|_\infty < \eps 3^{k+1}}}
			J(z)\bigl(h(x{+}z) - h(x)\bigr)^2\,.
		\end{equation}
	\end{lemma}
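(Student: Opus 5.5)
Since the shell $\{\eps 3^k \le \|z\|_\infty < \eps 3^{k+1}\}$ contains the forward slab $S_k$ of~\eqref{e.Sk}, it suffices to prove~\eqref{e.single.scale} with the inner sum restricted to $z \in S_k$. Write $r \coloneqq \eps 3^k \le \operatorname{diam}(U)$ and $N \coloneqq \lceil 2\operatorname{diam}(U)/r \rceil + 1$. The plan is a telescoping (chaining) argument in the $e_1$ direction, using random slab increments so that the chain ends outside $U$.

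\emph{Step 1: chaining.} Fix $x \in U^\eps$ and choose increments $z^{(1)}, \ldots, z^{(N)} \in S_k$. Each step increases the first coordinate by at least $r$, so $x + z^{(1)} + \cdots + z^{(N)}$ has first coordinate exceeding $x_1 + \operatorname{diam}(U) + r$. This point therefore lies outside $U$, and $h$ vanishes there. Telescoping and applying Cauchy--Schwarz gives
\[
h(x)^2 \le N \sum_{j=1}^N \bigl(h(y_j + z^{(j)}) - h(y_j)\bigr)^2, \qquad y_j \coloneqq x + z^{(1)} + \cdots + z^{(j-1)}.
\]

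\emph{Step 2: averaging.} Average this inequality over all choices of $(z^{(1)}, \ldots, z^{(N)})$ uniformly in $S_k^N$, and then sum over $x \in \eps\Zd$ with weight $\eps^d$. For fixed $j$, the map $x \mapsto y_j$ is a translation once the other increments are fixed, and $h$ is summed over all of $\eps\Zd$. Hence the $j$-th term equals $|S_k|^{-1}\sum_{y}\sum_{z \in S_k}(h(y+z)-h(y))^2$. Summing over $x$ before averaging is legitimate because only finitely many $x$ contribute nonzero terms on the left-hand side, and the right-hand side is a sum of nonnegative terms. This yields
\[
\|h\|_{L^2(U^\eps)}^2 \le \frac{N^2 \eps^d}{|S_k|} \sum_{y \in \eps\Zd} \sum_{z \in S_k} \bigl(h(y+z)-h(y)\bigr)^2.
\]

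\emph{Step 3: converting to $J$-weights.} For $z \in S_k$ we have $J(z) \ge c\, r^{-(d+2)}$, and $|S_k| \asymp (r/\eps)^d = 3^{kd}$. Therefore
\[
\frac{N^2\eps^d}{|S_k|} \le C\,\frac{\operatorname{diam}(U)^2}{r^2}\cdot \frac{\eps^{2d}}{r^d} \le C \operatorname{diam}(U)^2\, \eps^{2d} J(z)
\]
for every $z \in S_k$. Here I used $N \le C\operatorname{diam}(U)/r$, which holds because $r \le \operatorname{diam}(U)$, so the ``$+1$'' in $N$ is absorbed. Inserting $J(z)$ into the sum gives exactly the right-hand side of~\eqref{e.single.scale}, with $C = C(d)$.

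\emph{Main obstacle.} The main point of care is the counting in Step 3: checking that the factor $N^2$ from Cauchy--Schwarz combines with $|S_k|^{-1}$ and $J \asymp r^{-(d+2)}$ to give exactly $\operatorname{diam}(U)^2$, with no residual dependence on $k$. This uniformity in $k$ is what the later summation over $\asymp |\ln\eps|$ shells in Proposition~\ref{p.poincare} needs. The uniform-averaging step also has to be justified, via translation invariance of the sum over $\eps\Zd$ as described in Step 2.
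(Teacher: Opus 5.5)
Your proof is correct and is essentially the paper's argument. The paper telescopes $h=\sum_{n<N}T_k^n(I-T_k)h$ with the $J$-weighted slab average $T_k$ and uses the triangle inequality together with the $\ell^2$-contractivity of $T_k$; you instead telescope along explicit slab increments, apply Cauchy--Schwarz pathwise, and average uniformly over $S_k^N$ using translation invariance, which is the path-level version of the same computation, and in both cases the factor $N^2\asymp\operatorname{diam}(U)^2/(\eps 3^k)^2$ cancels against $J\asymp(\eps 3^k)^{-(d+2)}$ on $S_k$ to give a constant independent of $k$.
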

	
	\begin{proof}
		Fix an integer $k \ge 0$ with $\eps 3^k \le \operatorname{diam}(U)$ and set $R \coloneqq \operatorname{diam}(U)$.
		Define the averaging operator
		\begin{equation}
			\label{e.Tk}
			T_k h(x) \coloneqq \frac{\sum_{z \in S_k} J(z) h(x{+}z)}{\sum_{z \in S_k} J(z)}\,.
		\end{equation}
		
		\emph{Step~1: contraction.}
		The operator $T_k$ is a convex combination, so Jensen's inequality gives $(T_k h(x))^2 \le T_k(h^2)(x)$.
		Summing over $x \in \eps\Zd$ and using translation invariance of the weights gives
		\begin{equation}
			\label{e.contraction}
			\sum_{x \in \eps\Zd} (T_k h(x))^2 \le \sum_{x \in \eps\Zd} h(x)^2\,.
		\end{equation}
		
		\emph{Step~2: exit.}
		The goal is to show that iterating $T_k$ pushes the support of~$h$ out of~$U$.
		Every shift $z \in S_k$ has first coordinate $z_1 \ge \eps 3^k$.
		Set $N \coloneqq \lceil R / (\eps 3^k) \rceil$.
		After $N$ iterations, the first coordinate of the accumulated shift exceeds $N \eps 3^k \ge R$.
		For every $x \in U$, each point at which $T_k^N h(x)$ evaluates~$h$ has first coordinate at least $\sup_{y \in U} y_1$, hence lies outside~$U$ (since $U$ is open).
		Since $h = 0$ on $\eps\Zd \setminus U$, it follows that
		\begin{equation}
			\label{e.exit}
			T_k^N h(x) = 0 \qquad \text{for every } x \in U^\eps\,.
		\end{equation}
		
		\emph{Step~3: telescoping.}
		Since $h$ is supported in $U^\eps$ and $T_k^N h$ vanishes on $U^\eps$, the telescoping identity
		\[
		h(x) = \sum_{n=0}^{N-1} T_k^n(I - T_k) h(x)
		\]
		holds for every $x \in U^\eps$.
		Since $h$ is supported in $U^\eps$, enlarging the norm from $\ell^2(U^\eps)$ to $\ell^2(\eps\Zd)$ and applying the triangle inequality and contraction~\eqref{e.contraction} give
		\begin{equation}
			\label{e.triangle}
			\biggl(\sum_{x \in \eps\Zd} h(x)^2\biggr)^{1/2}
			\le N \biggl(\sum_{x \in \eps\Zd} \bigl|(I - T_k) h(x)\bigr|^2\biggr)^{1/2}\,.
		\end{equation}
		
		\emph{Step~4: energy control.}
		Applying Jensen's inequality to the defect gives
		\[
		\bigl|(I - T_k)h(x)\bigr|^2
		\le
		\frac{\sum_{z \in S_k} J(z)\bigl(h(x) - h(x{+}z)\bigr)^2}{\sum_{z \in S_k} J(z)}\,.
		\]
		Summing over $x$ and applying~\eqref{e.slab.mass}, since $S_k$ is contained in the $k$-th shell, it follows that
		\begin{equation}
			\label{e.energy.control}
			\sum_{x \in \eps\Zd} \bigl|(I - T_k) h(x)\bigr|^2
			\le
			C (\eps 3^k)^{2} \eps^{d}
			\sum_{x \in \eps\Zd}
			\sum_{\substack{z \in \eps\Zd \\ \eps 3^k \le \|z\|_\infty < \eps 3^{k+1}}}
			J(z)\bigl(h(x{+}z) - h(x)\bigr)^2\,.
		\end{equation}
		
		\emph{Step~5: conclusion.}
		Squaring~\eqref{e.triangle} and substituting~\eqref{e.energy.control} gives
		\[
		\sum_{x \in \eps\Zd} h(x)^2
		\le
		C \frac{R^2}{(\eps 3^k)^2} \cdot (\eps 3^k)^{2} \eps^{d}
		\sum_{x \in \eps\Zd}
		\sum_{\substack{z \in \eps\Zd \\ \eps 3^k \le \|z\|_\infty < \eps 3^{k+1}}}
		J(z)\bigl(h(x{+}z) - h(x)\bigr)^2\,.
		\]
		The factors $(\eps 3^k)^2$ cancel, leaving a prefactor of $C R^2 \eps^d$ that is independent of~$k$.
		Multiplying both sides by $\eps^d$ converts the left side to $\|h\|_{L^2(U^\eps)}^2$ and produces $C R^2 \eps^{2d}$ on the right, which is~\eqref{e.single.scale}.
	\end{proof}
	
	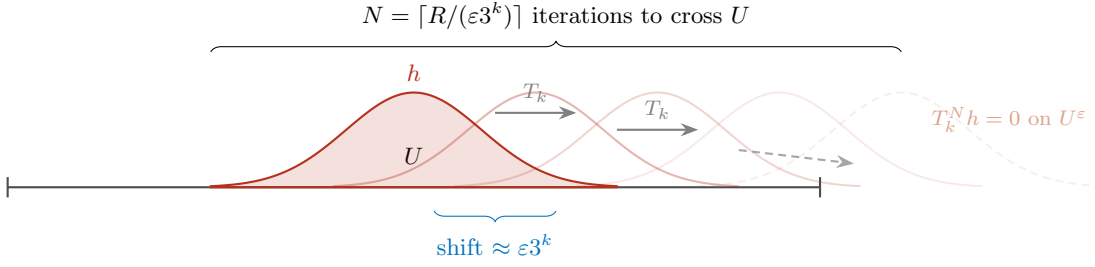
\begin{figure}[ht]
		\centering
		\begin{tikzpicture}[>=Stealth,scale=0.9]
			\def\L{6}  
			\draw[thick,black!70] (-\L,0) -- (\L,0);
			\draw[thick,black!70] (-\L,-0.15) -- (-\L,0.15);
			\draw[thick,black!70] (\L,-0.15) -- (\L,0.15);
			\node[above,font=\footnotesize] at (0,0.2) {$U$};
			\draw[BrickRed,thick,fill=BrickRed,fill opacity=0.15]
			plot[domain=-3:3,samples=60,smooth]
			(\x,{1.4*exp(-0.5*\x*\x)}) -- (3,0) -- (-3,0) -- cycle;
			\node[BrickRed,font=\footnotesize] at (0,1.7) {$h$};
			\foreach \s/\op in {1.8/0.35, 3.6/0.2, 5.4/0.1} {
				\draw[BrickRed,opacity=\op,thick]
				plot[domain=-3+\s:3+\s,samples=60,smooth]
				(\x,{1.4*exp(-0.5*(\x-\s)*(\x-\s))});
			}
			\draw[BrickRed,opacity=0.08,thick,densely dashed]
			plot[domain=-3+7.2:3+7.2,samples=60,smooth]
			(\x,{1.4*exp(-0.5*(\x-7.2)*(\x-7.2))});
			\node[BrickRed!40,font=\scriptsize] at (8.8,1.0) {$T_k^{N} h = 0$ on $U^\eps$};
			\draw[->,black!50,thick] (1.2,1.1) -- (2.4,1.1)
			node[midway,above,font=\scriptsize] {$T_k$};
			\draw[->,black!50,thick] (3.0,0.85) -- (4.2,0.85)
			node[midway,above,font=\scriptsize] {$T_k$};
			\draw[->,black!35,thick,densely dashed] (4.8,0.55) -- (6.5,0.35);
			\draw[RoyalBlue,decorate,decoration={brace,amplitude=4pt,mirror}]
			(0.3,-0.35) -- (2.1,-0.35)
			node[midway,below=5pt,font=\footnotesize,text=RoyalBlue] {shift $\approx \eps 3^k$};
			\draw[decorate,decoration={brace,amplitude=4pt}]
			(-3,2.0) -- (7.2,2.0)
			node[midway,above=5pt,font=\footnotesize]
			{$N = \lceil R / (\eps 3^k) \rceil$ iterations to cross $U$};
		\end{tikzpicture}
		\caption{The single-scale averaging argument (Lemma~\ref{l.single.scale}).
			The function $h$ (red bump) is supported in $U^\eps$.
			Each application of $T_k$ averages $h$ at points shifted by $\approx \eps 3^k$ in the first coordinate.
			After $N = \lceil \operatorname{diam}(U) / (\eps 3^k) \rceil$ iterations, the sampled points lie outside $U$, so $T_k^N h = 0$ on $U^\eps$.
			The resulting prefactor $N^2 \cdot (\eps 3^k)^2 \eps^d = C\operatorname{diam}(U)^2 \eps^d$ is independent of the scale~$k$: this is special to the critical exponent $d + 2$.}
		\label{fig:slab}
	\end{figure}
	
	\subsection{Proof of the Poincar\'e inequality}
	
	\begin{proof}[Proof of Proposition~\ref{p.poincare}]
		Set $R \coloneqq \operatorname{diam}(U)$.
		We may assume $\eps < R$; otherwise $U^\eps$ contains at most $C_d$ points and the inequality holds trivially.
		Let $m \coloneqq \lceil \log_3(R/\eps) \rceil \ge 1$.
		Summing the single-scale estimate~\eqref{e.single.scale} over all integers $0 \le k \le m - 1$ gives
		\[
		m \|h\|_{L^2(U^\eps)}^2
		\le
		C R^2 \eps^{2d}
		\sum_{x \in \eps\Zd}
		\sum_{z \in \eps\Zd \setminus \{0\}}
		J(z)\bigl(h(x{+}z) - h(x)\bigr)^2\,,
		\]
		since the triadic shells for $0 \le k \le m - 1$ are pairwise disjoint and the omitted shells with $k \ge m$ contribute non-negatively.
		By the definition of the $H^1_{\mathrm{crit}}$ norm~\eqref{e.H1crit}, the right side equals $C R^2 \kappa_\eps \|h\|_{H^1_{\mathrm{crit}}(U^\eps)}^2$.
		Dividing by~$m$ gives
		\[
		\|h\|_{L^2(U^\eps)}^2 \le \frac{C R^2 \kappa_\eps}{m} \|h\|_{H^1_{\mathrm{crit}}(U^\eps)}^2\,.
		\]
		It remains to show that the prefactor is bounded independently of~$\eps$.
		Since $m = \lceil \log_3(R/\eps) \rceil$, we have $m \ge 1$ and $\ln(R/\eps) \le m \ln 3$, hence $|\ln\eps| \le m \ln 3 + |\ln R|$, so $1 + |\ln\eps| \le C(R) m$.
		Using $\kappa_\eps \le C_d(1 + |\ln\eps|)$ (by Proposition~\ref{sumesonthelattice}), we obtain $\kappa_\eps / m \le C(d, R)$, which completes the proof.
	\end{proof}

	\section{Two-scale expansion}
	\label{sec:twoscale}
	
	Throughout this section, let~$U \subset \Rd$ be a bounded domain with~$C^{2,\sigma}$ boundary for some~$\sigma \in (0,1)$, let~$f \in C^{0,\sigma}(\bar U)$, and let~$\mu \ge 0$.
	We prove Theorem~\ref{main.thm} by approximating~$u^\eps_\mu$ with the homogenized solution corrected by first-order lattice correctors, decomposing the residual into four error terms, and closing the energy estimate with the Poincar\'e inequality (Proposition~\ref{p.poincare}).

	\subsection{Corrector bound on bounded domains}
	
	\begin{proposition}[Corrector bound on bounded domains]
		\label{p.corrector.domain}
		Under Assumption~\textup{(U)}, let~$U \subset \Rd$ be a bounded Lipschitz domain and let~$\eps \in (0,1)$.
		For~$1 \le i \le d$, let~$\phi_i$ solve the corrector problem
		\begin{equation} \label{e.defcorrector}
			\begin{cases}
				\mathcal{L}^\eps(\ell_{e_i} + \phi_i) = 0 & \text{in } U^\eps\,,\\
				\phi_i = 0 & \text{on } \eps\Zd \setminus U^\eps\,.
			\end{cases}
		\end{equation}
		Then
		\begin{equation}
			\label{e.corrector.bounds}
			\|\phi_i\|_{H^1_{\mathrm{crit}}(U^\eps)}^2 + \|\phi_i\|_{L^2(U^\eps)}^2
			\le \frac{C}{\kappa_\eps}
			+ \mathcal{O}_1\left(\frac{C\eps^d}{\kappa_\eps}\right)
			+ \mathcal{O}_2\left(C\eps^{d/2}\right),
		\end{equation}
		where~$C$ depends only on~$d$,~$\lambda$, and~$U$.
	\end{proposition}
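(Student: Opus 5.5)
This is essentially a corollary of Proposition~\ref{p.dirichlet} combined with the Poincar\'e inequality of Proposition~\ref{p.poincare}. The plan is to identify $\phi_i$ with the first-order corrector $\phi^\eps_{e_i}$ of Section~\ref{sec:mainresult}. Indeed, \eqref{e.defcorrector} is exactly \eqref{eq:elleqdefnu} with $p = e_i$, and that problem has a unique solution since it is the Euler--Lagrange equation of the strictly convex functional in \eqref{e.nu}. Hence $\phi_i = \phi^\eps_{e_i}$.

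Applying \eqref{e.dirichlet.as} with $p = e_i$, so $|p| = 1$, gives
\[
\|\phi_i\|_{H^1_{\mathrm{crit}}(U^\eps)}^2 \le \frac{C}{|\ln\eps|} + \mathcal{O}_1\left(\frac{C\eps^d}{|\ln\eps|}\right) + \mathcal{O}_2\left(C\eps^{d/2}\right).
\]
To replace $|\ln\eps|$ by $\kappa_\eps$, I use Proposition~\ref{sumesonthelattice}: $\kappa_\eps = dV_d|\ln\eps| + O(1)$. Consequently $\kappa_\eps \le C|\ln\eps|$ for $\eps \le 1/2$, and so $1/|\ln\eps| \le C/\kappa_\eps$.

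For $\eps \in (1/2,1)$, both $|\ln\eps|$ and $\kappa_\eps$ need care. Here $\kappa_\eps$ is a bounded sum with finitely many terms, though it may vanish when $\eps$ is close to $1$, since then no lattice point satisfies $\eps \le |z| \le 1$. I would handle this range directly and crudely. Testing $w = 0$ is no good here because $\nu$ is defined through the corrector energy. Instead, bound the energy via \eqref{eq:upperboundnusolenoidal} with $\mathbf{g}(x+z,x) = J(z)(p\cdot z)$: this gives a deterministic bound $C\eps^{2d}\kappa_\eps^{-1}\cdot \eps^{-d}\cdot \eps^{-d-2} \le C/\kappa_\eps$, since $U^\eps$ has boundedly many points. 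Alternatively, and more simply, I would note that the paper implicitly restricts to $\eps \le 1/2$ (as in the proof of Proposition~\ref{p.dirichlet}) and absorb the remaining range into constants. The $\mathcal{O}$ terms transform the same way, using the monotonicity $X \le \mathcal{O}_s(\theta) \Rightarrow X \le \mathcal{O}_s(C\theta)$ for $C \ge 1$.

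For the $L^2$ part, $\phi_i$ vanishes outside $U^\eps$, so Proposition~\ref{p.poincare} gives $\|\phi_i\|_{L^2(U^\eps)}^2 \le C\|\phi_i\|_{H^1_{\mathrm{crit}}(U^\eps)}^2$ with $C = C(d,\operatorname{diam}U)$. Adding this to the $H^1_{\mathrm{crit}}$ bound and using the summation property of $\mathcal{O}_s$ yields \eqref{e.corrector.bounds}.

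The only mildly delicate point is the bookkeeping between $|\ln\eps|$ and $\kappa_\eps$ near $\eps = 1$. For small $\eps$ everything is immediate from the two earlier propositions.
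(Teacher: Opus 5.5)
Your main line for $\eps \le 1/2$ matches the paper's proof exactly: identify $\phi_i = \phi^\eps_{e_i}$, apply \eqref{e.dirichlet.as} with $p = e_i$, use $\kappa_\eps \le C|\ln\eps|$ from Proposition~\ref{sumesonthelattice}, then Proposition~\ref{p.poincare} for the $L^2$ part. The gap is the range $\eps \in (1/2,1)$. The statement covers this range, and your treatment of it fails in three places.

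First, $\kappa_\eps$ never vanishes. The $2d$ points $\pm\eps e_j$ satisfy $|z| = \eps < 1$, and each contributes $\eps^d\eps^{-d} = 1$. So $\kappa_\eps \ge 2d$ for every $\eps \in (0,1)$, which is the fact the paper uses, and $\kappa_\eps \le C_d$ on this range.

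Second, your ``crude'' bound via \eqref{eq:upperboundnusolenoidal} with $\mathbf{g}(x+z,x) = J(z)(p\cdot z)$ is not $C/\kappa_\eps$; it is $+\infty$. The right-hand side equals $C\eps^{2d}\kappa_\eps^{-1}\sum_{x \in U^\eps}\sum_{z \ne 0} J(z)(p\cdot z)^2\bigl(1 - \a(\tfrac{x+z}{\eps},\tfrac{x}{\eps})\bigr)^2$. The $z$-sum runs over all of $\eps\Zd$ and behaves like $\sum_z |z|^{-d}$, which diverges logarithmically unless $\a \equiv 1$. Your count $\eps^{-d}\cdot\eps^{-d-2}$ only sees the nearest-neighbour $z$. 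This divergence is exactly why Step~2 of Proposition~\ref{p.dirichlet} builds the cycle-corrected field $\mathbf{g}_p$.

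Third, ``absorbing the range into constants'' via Proposition~\ref{p.dirichlet} does not work either. The bound $C/|\ln\eps|$ tends to $\infty$ as $\eps \to 1$, while the target $C/\kappa_\eps$ stays bounded, so this range does not follow from \eqref{e.dirichlet.as}.

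What is missing is a direct deterministic energy estimate. This is what the paper's remark (``trivial since $U^\eps$ has $O(1)$ points and $\kappa_\eps \ge 2d$'') amounts to.
\begin{itemize}
\item Testing \eqref{e.defcorrector} against $\phi_i$ gives $D^\eps_\a(\phi_i) = \eps^d\sum_{x \in U^\eps}\phi_i(x)\,\mathcal{L}^\eps\ell_{e_i}(x)$.
\item Since the first moment $\sum_{z \ne 0}|z|^{-d-1}$ converges, $|\mathcal{L}^\eps\ell_{e_i}(x)| \le C\eps^d\kappa_\eps^{-1}\eps^{-d-1} = C/(\kappa_\eps\eps)$.
\item Cauchy--Schwarz (using $\eps^d\#U^\eps \le C$), ellipticity and Proposition~\ref{p.poincare} then give $\|\phi_i\|_{H^1_{\mathrm{crit}}(U^\eps)} \le C/(\kappa_\eps\eps)$.
\item Because $\kappa_\eps \ge 2d$, this yields $\|\phi_i\|^2_{H^1_{\mathrm{crit}}(U^\eps)} \le C/\kappa_\eps$ for $\eps > 1/2$.
\end{itemize}
If you replace your $\eps \in (1/2,1)$ paragraph with this argument, the proof is complete.
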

	
	\begin{proof}
		Apply Proposition~\ref{p.dirichlet} with $p = e_i$.
		Since $|e_i| = 1$ and $\kappa_\eps \le C|\ln\eps|$ for $\eps \le 1/2$ (by Proposition~\ref{sumesonthelattice}), the bound~\eqref{e.dirichlet.as} implies
		\[
		\|\phi_i\|_{H^1_{\mathrm{crit}}(U^\eps)}^2
		\le \frac{C}{\kappa_\eps}
		+ \mathcal{O}_1\left(\frac{C\eps^d}{\kappa_\eps}\right)
		+ \mathcal{O}_2\left(C\eps^{d/2}\right).
		\]
		For $\eps > 1/2$, the bound holds trivially since $U^\eps$ has $O(1)$ points and $\kappa_\eps \ge 2d$.
		The Poincar\'e inequality (Proposition~\ref{p.poincare}) gives the same bound for $\|\phi_i\|_{L^2(U^\eps)}^2$.
	\end{proof}
	
	The macroscopic bilinear form associated with~$\a$ is
	\begin{equation}
		\label{e.Daeps}
		D_\a^\eps(u,v) \coloneqq \frac{\eps^{2d}}{2\kappa_\eps}\sum_{\substack{x,z \in \eps\Zd \\ z \ne 0}} \a\left(\frac{x}{\eps},\frac{x{+}z}{\eps}\right) J(z)\bigl(u(x{+}z) - u(x)\bigr)\bigl(v(x{+}z) - v(x)\bigr)\,.
	\end{equation}
	The ellipticity bound~$\lambda \le \a \le \lambda^{-1}$ gives~$\frac{\lambda}{2}\|w\|_{H^1_{\mathrm{crit}}}^2 \le D_\a^\eps(w) \le \frac{\lambda^{-1}}{2}\|w\|_{H^1_{\mathrm{crit}}}^2$ for every finitely supported~$w$ on~$\eps\Zd$.
	
	\begin{lemma}[Monotonicity of the corrector energy]
		\label{l.corrector.monotonicity}
		Let~$U \subset V$ be bounded Lipschitz domains in~$\Rd$, and let~$\eps \in (0,1)$.
		For every~$p \in \Rd$, the correctors~$\phi_U$ and~$\phi_V$ defined by~\eqref{eq:elleqdefnu} on~$U^\eps$ and~$V^\eps$ satisfy
		\begin{equation}
			\label{e.corrector.monotonicity}
			D_\a^\eps(\phi_U) \le D_\a^\eps(\phi_V)\,.
		\end{equation}
		Consequently,
		\begin{equation}
			\label{e.corrector.monotonicity.norms}
			\|\phi_U\|_{H^1_{\mathrm{crit}}(U^\eps)}^2 + \|\phi_U\|_{L^2(U^\eps)}^2
			\le C\bigl(\|\phi_V\|_{H^1_{\mathrm{crit}}(V^\eps)}^2 + \|\phi_V\|_{L^2(V^\eps)}^2\bigr)\,,
		\end{equation}
		where~$C$ depends only on~$d$,~$\lambda$, and~$\operatorname{diam}(U)$.
	\end{lemma}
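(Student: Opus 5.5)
The plan is to use the variational characterization of the corrector energy. By expanding the square in~\eqref{e.nu} at the minimizer, we have $D_\a^\eps(\phi_U) = \nu(U^\eps,p)$. The edges touching $U^\eps$ are the only ones on which $\phi_U$ has a nonzero increment, so the restricted edge sum in the remark equals the full sum defining $D_\a^\eps$. The same holds for $V$.

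Next we rewrite $\nu(U^\eps,p)$ as a minimum over a common class of test functions. For any $w$ supported in $V^\eps$, consider the quantity
\[
\mathcal{E}(w) \coloneqq \frac{\eps^{2d}}{2\kappa_\eps}\sum_{x,z} \a J(z)\bigl[(p\cdot z + w(x{+}z)-w(x))^2 - (p\cdot z)^2\bigr].
\]
This sum is over all edges, and it converges absolutely. Restricting the sum to edges touching $U^\eps$ (respectively $V^\eps$) changes nothing, since other edges contribute zero. Hence $-\nu(U^\eps,p) = \min\{\mathcal{E}(w) : w \text{ supported in } U^\eps\}$, and likewise for $V$.

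Since every $w$ supported in $U^\eps$ is also supported in $V^\eps$, the minimum over the larger class is smaller. This gives $-\nu(V^\eps,p)\le -\nu(U^\eps,p)$, i.e. $\nu(U^\eps,p)\le \nu(V^\eps,p)$, which is~\eqref{e.corrector.monotonicity}.

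For~\eqref{e.corrector.monotonicity.norms}, the first step is ellipticity. The norms $\|\cdot\|_{H^1_{\mathrm{crit}}(U^\eps)}$ and $\|\cdot\|_{H^1_{\mathrm{crit}}(V^\eps)}$ are given by the same formula~\eqref{e.H1crit}, summed over all of $\eps\Zd$. Therefore
\[
\|\phi_U\|_{H^1_{\mathrm{crit}}}^2 \le \tfrac{2}{\lambda}D_\a^\eps(\phi_U)\le \tfrac{2}{\lambda}D_\a^\eps(\phi_V)\le \lambda^{-2}\|\phi_V\|_{H^1_{\mathrm{crit}}}^2 .
\]
The second step is the Poincaré inequality, Proposition~\ref{p.poincare} applied on $U$. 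It gives $\|\phi_U\|_{L^2(U^\eps)}^2\le C(d,\operatorname{diam}U)\|\phi_U\|_{H^1_{\mathrm{crit}}}^2$, and this is bounded by the same right side. Adding the two estimates yields the claim with $C$ depending on $d$, $\lambda$, and $\operatorname{diam}(U)$.

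The only point requiring care is the bookkeeping of the edge sets: showing that the restricted functionals coincide with the global $\mathcal{E}$, including the absolute convergence of the subtracted sum. I expect this to be routine, because $w$ is finitely supported and the terms with both endpoints outside the support vanish identically.
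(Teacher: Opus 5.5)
Your proposal is correct and follows essentially the paper's argument. In both, the corrector is the minimizer of the shifted energy $D_\a^\eps(w) + 2D_\a^\eps(\ell_p,w)$ (your $\mathcal{E}$), whose minimum value is $-D_\a^\eps(\phi_D)$; one compares minima over nested admissible classes and then uses ellipticity and Poincar\'e for the norms. The only cosmetic difference is how that minimum value is obtained: the paper gets $\mathcal{J}(\phi_D) = -D_\a^\eps(\phi_D)$ by minimizing along $t\phi_D$, while you read it off from the identity in the Remark following \eqref{e.nu}.
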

	
	\begin{proof}
		For a function~$w$ vanishing on~$\eps\Zd \setminus D$ (where~$D$ is either~$U$ or~$V$), consider the shifted energy
		\[
		\mathcal{J}(w) \coloneqq D_\a^\eps(w) + 2D_\a^\eps(\ell_p, w)\,.
		\]
		The identity~$D_\a^\eps(f, \eta) = \eps^d\sum_{x \in D^\eps}\eta(x)(-\mathcal{L}^\eps f)(x)$, valid for every~$\eta$ vanishing outside~$D$, shows that the first-order condition~$\mathcal{J}'(\phi_D)[\eta] = 0$ is equivalent to the corrector equation~\eqref{eq:elleqdefnu}.
		Therefore~$\phi_D$ is the unique minimizer of~$\mathcal{J}$ over~$\{w = 0 \text{ on } \eps\Zd \setminus D\}$.
		Since~$t\phi_D$ is admissible for every~$t \in \R$, the quadratic~$t \mapsto \mathcal{J}(t\phi_D) = t^2 D_\a^\eps(\phi_D) + 2t D_\a^\eps(\ell_p, \phi_D)$ attains its minimum at~$t = 1$, which forces~$\mathcal{J}(\phi_D) = -D_\a^\eps(\phi_D)$.
		Since~$U \subset V$, every function vanishing outside~$U$ also vanishes outside~$V$, so
		\[
		-D_\a^\eps(\phi_V) = \mathcal{J}(\phi_V) \le \mathcal{J}(\phi_U) = -D_\a^\eps(\phi_U)\,,
		\]
		which gives~\eqref{e.corrector.monotonicity}.
		The norm bound~\eqref{e.corrector.monotonicity.norms} follows from the ellipticity estimate and Proposition~\ref{p.poincare}.
	\end{proof}
	
	\subsection{Proof of Theorem~\ref{main.thm}}
	
	By replacing~$\a$ with~$\a/\E[\a(0,e_1)]$,~$f$ with~$f/\E[\a(0,e_1)]$, and~$\mu$ with~$\mu/\E[\a(0,e_1)]$, we may assume without loss of generality that~$\E[\a(0,e_1)] = 1$.
	The homogenized equation then has coefficient~$1/(2d)$.
	Let~$u^\eps_\mu \colon \eps \Zd \to \R$ and~$\bar u_\mu \in H^1_0(U)$ be the solutions of
	\begin{equation} \label{e.defHomogtheorem}
		\left\{ \begin{aligned}
			\mu u^{\eps}_\mu - \mathcal{L}^{\eps} u^{\eps}_\mu &= f && \text{in } U^\eps\,, \\
			u^{\eps}_\mu &= 0  && \text{on } \eps \Zd \setminus U\,,
		\end{aligned} \right.
		\qquad \text{and} \qquad
		\left\{ \begin{aligned}
			\mu \bar u_\mu - \frac{1}{2d} \Delta \bar u_\mu &= f && \text{in } U\,, \\
			\bar u_\mu &= 0  && \text{on } \partial U\,.
		\end{aligned} \right.
	\end{equation}
	
	\begin{proof}[Proof of Theorem~\ref{main.thm}]
		Since~$f \in C^{0,\sigma}(\bar U)$ and~$U$ has~$C^{2,\sigma}$ boundary, Schauder estimates (see Gilbarg and Trudinger~\cite{GilbargTrudinger2001}, Theorem~6.19) give~$\bar u_\mu \in C^{2,\sigma}(\bar U)$.
		By the Stein extension theorem and multiplication by a smooth cutoff equal to~$1$ near~$\bar U$, there exists~$\widetilde u_\mu \in C^{2,\sigma}_c(\Rd)$ with~$\widetilde u_\mu = \bar u_\mu$ on~$\bar U$.
		Define the zero extension of~$\bar u_\mu$ by
		\begin{equation}
			\label{e.ubar0}
			\bar u_\mu^0(x) \coloneqq
			\begin{cases}
				\bar u_\mu(x) & \text{if } x \in U\,,\\
				0 & \text{if } x \notin U\,,
			\end{cases}
		\end{equation}
		and write~$b_i \coloneqq \partial_i \widetilde u_\mu$ for~$1 \le i \le d$.
		Let~$\phi_i$ be the corrector from~\eqref{e.defcorrector}; by Proposition~\ref{p.corrector.domain}, it satisfies~\eqref{e.corrector.bounds}.
		
		\emph{Step~1: Approximate solution.}
		Define the approximate solution
		\begin{equation}
			\label{e.weps}
			w^\eps(x) \coloneqq \bar u_\mu^0(x) + \sum_{i=1}^d b_i(x) \phi_i(x) \qquad \text{for } x \in \eps\Zd\,.
		\end{equation}
		Since~$\bar u_\mu^0 = 0$ and~$\phi_i = 0$ on~$\eps\Zd \setminus U$, the function~$w^\eps$ vanishes on~$\eps\Zd \setminus U$.
		On~$U \cap \eps\Zd$, the difference~$w^\eps - \bar u_\mu = \sum_{i=1}^{d} b_i \phi_i$ satisfies
		\begin{equation}
			\label{e.weps.approx}
			\|w^\eps - \bar u_\mu\|_{L^2(U \cap \eps\Zd)} \le C\sum_{i=1}^d \|\phi_i\|_{L^2}\,,
		\end{equation}
		since~$\|b_i\|_{L^\infty} \le C$.
		
		\emph{Step~2: Residual decomposition.}
		Set~$h^\eps \coloneqq u^\eps_\mu - w^\eps$.
		Since~$\mu u^\eps_\mu - \mathcal{L}^\eps u^\eps_\mu = f$ in~$U \cap \eps\Zd$, the residual satisfies~$\mu h^\eps - \mathcal{L}^\eps h^\eps = f - \mu w^\eps + \mathcal{L}^\eps w^\eps$ in~$U \cap \eps\Zd$.
		The product rule applied to~$\mathcal{L}^\eps(b_i \phi_i)$ gives
		\begin{equation}
			\label{e.product.bi.phi}
			\mathcal{L}^\eps(b_i \phi_i)(x) = b_i(x) \mathcal{L}^\eps\phi_i(x) + \frac{\eps^d}{\kappa_\eps}\sum_{\substack{z \in \eps\Zd \setminus \{0\}}} \a\left(\frac{x}{\eps},\frac{x{+}z}{\eps}\right) J(z)\bigl(b_i(x{+}z) - b_i(x)\bigr)\phi_i(x{+}z)\,.
		\end{equation}
		The corrector equation~\eqref{e.defcorrector} gives~$\mathcal{L}^\eps\phi_i(x) = -\mathcal{L}^\eps\ell_{e_i}(x)$ for~$x \in U \cap \eps\Zd$.
		Since~$\widetilde u_\mu(x) = \bar u_\mu(x)$ and~$\nabla\widetilde u_\mu(x) = \nabla\bar u_\mu(x)$ for~$x \in U$, the decomposition
		\[
		\bar u_\mu^0(x{+}z) - \bar u_\mu(x) - \nabla\bar u_\mu(x) \cdot z
		= \widetilde R_\mu(x,z) - r(x{+}z)\,,
		\]
		for every~$x \in U$ and every~$z \in \eps\Zd$, where
		\begin{equation}
			\label{e.Rtilde}
			\widetilde R_\mu(x,z) \coloneqq \widetilde u_\mu(x{+}z) - \widetilde u_\mu(x) - \nabla\widetilde u_\mu(x) \cdot z\,,
		\end{equation}
		is the Taylor remainder of the smooth extension, and
		\begin{equation}
			\label{e.rdef}
			r \coloneqq \widetilde u_\mu - \bar u_\mu^0\,,
		\end{equation}
		is the boundary defect.
		Then~$r = 0$ in~$U$, and since both~$\widetilde u_\mu$ and~$\bar u_\mu$ vanish on~$\partial U$,
		\begin{equation}
			\label{e.r.lip}
			|r(y)| \le C\operatorname{dist}(y, U)
			\qquad \text{for every } y \notin U\,.
		\end{equation}
		Using~$\sum_{i=1}^{d} b_i(x) e_i = \nabla\bar u_\mu(x)$ for~$x \in U$, the corrector equation cancels the linear-in-$z$ terms exactly:
		\[
		\mathcal{L}^\eps\bar u_\mu^0(x) + \sum_{i=1}^d b_i(x) \mathcal{L}^\eps\phi_i(x)
		= \frac{\eps^d}{\kappa_\eps}\sum_{\substack{z \in \eps\Zd \setminus \{0\}}} \a\left(\frac{x}{\eps},\frac{x{+}z}{\eps}\right) J(z)\bigl(\widetilde R_\mu(x,z) - r(x{+}z)\bigr)\,.
		\]
		The mass term contributes~$-\mu w^\eps = -\mu \bar u_\mu^0 - \mu\sum_{i=1}^{d} b_i\phi_i$.
		On~$U \cap \eps\Zd$,~$\bar u_\mu^0 = \bar u_\mu$, so the term~$f(x) - \mu\bar u_\mu(x)$ replaces~$f(x)$ in the smooth error.
		Combining with~\eqref{e.product.bi.phi} and the homogenized equation~$f - \mu\bar u_\mu = -\frac{1}{2d}\Delta\bar u_\mu$, the residual is
		\begin{equation}
			\label{e.residual.decomp}
			\mu h^\eps - \mathcal{L}^\eps h^\eps = \mathcal{E}_{1,\mathrm{sm}} + \mathcal{E}_{1,\mathrm{bd}} + \mathcal{E}_2 - \mu\sum_{i=1}^{d} b_i \phi_i \qquad \text{in } U \cap \eps\Zd\,,
		\end{equation}
		where
		\begin{align}
			\mathcal{E}_{1,\mathrm{sm}}(x) &\coloneqq \bigl(f(x) - \mu\bar u_\mu(x)\bigr) + \frac{\eps^d}{\kappa_\eps}\sum_{\substack{z \in \eps\Zd \setminus \{0\}}} \a\left(\frac{x}{\eps},\frac{x{+}z}{\eps}\right) J(z) \widetilde R_\mu(x,z)\,, \label{e.E1sm.def} \\
			\mathcal{E}_{1,\mathrm{bd}}(x) &\coloneqq -\frac{\eps^d}{\kappa_\eps}\sum_{\substack{z \in \eps\Zd \setminus \{0\}}} \a\left(\frac{x}{\eps},\frac{x{+}z}{\eps}\right) J(z) r(x{+}z)\,, \label{e.E1bd.def} \\
			\mathcal{E}_2(x) &\coloneqq \frac{\eps^d}{\kappa_\eps}\sum_{\substack{z \in \eps\Zd \setminus \{0\}}} \a\left(\frac{x}{\eps},\frac{x{+}z}{\eps}\right) J(z) \sum_{i=1}^d \bigl(b_i(x{+}z) - b_i(x)\bigr)\phi_i(x{+}z)\,. \label{e.E2.def}
		\end{align}
		\emph{Step~3: Normalization identity.}
		The following identity follows from the reflection symmetry~$z_i \mapsto -z_i$ and the permutation symmetry~$z_i \leftrightarrow z_j$ of~$\eps\Zd$ together with~$J(z)|z|^2 = |z|^{-d}$:
		\begin{equation}
			\label{e.second.moment.normalised}
			\frac{\eps^d}{\kappa_\eps}
			\sum_{\substack{z \in \eps\Zd \setminus\{0\} \\ |z|\le 1}}
			J(z) z_i z_j
			=
			\frac{1}{d} \delta_{ij}\,.
		\end{equation}
		Each diagonal term equals~$\frac{1}{d}\frac{\eps^d}{\kappa_\eps}\sum_{\substack{z \in \eps\Zd \setminus \{0\} \\ |z|\le 1}} |z|^{-d} = \frac{1}{d}$ by~\eqref{e.kappaeps}, and the off-diagonal terms vanish by reflection.
		
		\emph{Step~4: Logarithmic blocks and enlarged cube.}
		For each~$\eps \in (0,1)$, let~$m(\eps)$ be the smallest integer~$m \ge 1$ such that a translate of~$\eps\cu_m$ contains~$U$, and let~$I_m \coloneqq \{\eps \in (0,1) : m(\eps) = m\}$.
		Since~$\eps \cdot 3^m \asymp \operatorname{diam}(U)$, there exist constants~$0 < c_U \le C_U < \infty$ with
		\begin{equation}
			\label{e.block.eps}
			c_U 3^{-m} \le \eps \le C_U 3^{-m}
			\qquad \text{for every } \eps \in I_m\,.
		\end{equation}
		In particular,~$\kappa_\eps \asymp m$ uniformly on~$I_m$.
		Choose~$L = L(U) \in \mathbb{N}$ so large that~$U \subset \eps\cu_{m+L-1}$ for every~$m \ge 1$ and every~$\eps \in I_m$.
		This is possible because the half-side length~$\eps 3^{m+L-1}/2 \ge c_U 3^{L-1}/2$ by~\eqref{e.block.eps}, which exceeds~$\sup_{x \in U} \|x\|_\infty$ for~$L$ large enough.
		
		\smallskip
		
		\emph{Step~5: Block corrector bound.}
		For each~$m \ge 1$ and each~$\eps \in I_m$, the enlarged cube~$V_m^\eps \coloneqq \eps\cu_{m+L}$ contains~$U$.
		We write~$\phi_i^\eps$ for the corrector~\eqref{e.defcorrector} to emphasize the dependence on~$\eps$.
		For~$1 \le i \le d$, let~$\Phi_i^{m,\eps}$ denote the corrector on~$V_m^\eps$, namely the solution of~\eqref{e.defcorrector} with~$U$ replaced by~$V_m^\eps$.
		The microscopic rescaling~$\Psi_i^{(m)}(u) \coloneqq \eps^{-1}\Phi_i^{m,\eps}(\eps u)$ solves
		\begin{equation}
			\label{e.micro.corrector}
			\sum_{w \in \Zd \setminus \{0\}} \a(u, u{+}w) |w|^{-(d+2)} \bigl(e_i \cdot w + \Psi_i^{(m)}(u{+}w) - \Psi_i^{(m)}(u)\bigr) = 0
			\qquad \text{for every } u \in \cu_{m+L}\,,
		\end{equation}
		with~$\Psi_i^{(m)} = 0$ on~$\Zd \setminus \cu_{m+L}$.
		After the change of variables $x = \eps u$ and $z = \eps w$, the common prefactor $1/(\kappa_\eps\eps)$ cancels.
		Hence $\Psi_i^{(m)}$ depends only on the microscopic conductances $\{\a(u, u{+}w) : u \in \cu_{m+L},w \ne 0\}$, and is therefore the same function for every $\eps \in I_m$.
		A direct computation gives the scaling formulas
		\begin{equation}
			\label{e.scaling.L2}
			\|\Phi_i^{m,\eps}\|_{L^2((V_m^\eps)^\eps)}^2
			= \eps^{d+2}\sum_{u \in \cu_{m+L}} |\Psi_i^{(m)}(u)|^2\,,
		\end{equation}
		\begin{equation}
			\label{e.scaling.H1}
			\|\Phi_i^{m,\eps}\|_{H^1_{\mathrm{crit}}((V_m^\eps)^\eps)}^2
			= \frac{\eps^d}{\kappa_\eps}\sum_{u \in \Zd}\sum_{w \in \Zd \setminus \{0\}} |w|^{-(d+2)}\bigl(\Psi_i^{(m)}(u{+}w) - \Psi_i^{(m)}(u)\bigr)^2\,.
		\end{equation}
		Since~$\eps \asymp 3^{-m}$ and~$\kappa_\eps \asymp m$ on~$I_m$, the prefactors~$\eps^{d+2}$ and~$\eps^d/\kappa_\eps$ vary by bounded ratios over~$I_m$.
		For each~$m \ge 1$ with~$I_m \ne \varnothing$, fix a representative~$\widehat\eps_m \in I_m$.
		The microscopic sums in~\eqref{e.scaling.L2} and~\eqref{e.scaling.H1} are the same for every~$\eps \in I_m$, so there exists~$C = C(U) < \infty$ such that
		\[
		\|\Phi_i^{m,\eps}\|_{H^1_{\mathrm{crit}}}^2 + \|\Phi_i^{m,\eps}\|_{L^2}^2
		\le C\bigl(\|\Phi_i^{m,\widehat\eps_m}\|_{H^1_{\mathrm{crit}}}^2 + \|\Phi_i^{m,\widehat\eps_m}\|_{L^2}^2\bigr)
		\qquad \text{for every } \eps \in I_m\,.
		\]
		Since each~$V_m^{\widehat\eps_m}$ is a cube of diameter bounded by~$C\operatorname{diam}(U)$, the constant in Proposition~\ref{p.corrector.domain} is uniform in~$m$.
		Applying Proposition~\ref{p.corrector.domain} to~$V_m^{\widehat\eps_m}$ at the single representative scale, with $\widehat\eps_m \asymp 3^{-m}$ and $\kappa_{\widehat\eps_m} \asymp m$, gives
		\begin{equation}
			\label{e.phi.enlarged}
			\|\Phi_i^{m,\widehat\eps_m}\|_{H^1_{\mathrm{crit}}}^2 + \|\Phi_i^{m,\widehat\eps_m}\|_{L^2}^2
			\le \frac{C}{m}
			+ \mathcal{O}_1\left(\frac{C 3^{-md}}{m}\right)
			+ \mathcal{O}_2\left(C 3^{-md/2}\right).
		\end{equation}
		For $M$ large enough, the probability that the right side exceeds $M/m$ is at most $Ce^{-cm}$.
		By the scaling comparability, the bound~$\|\Phi_i^{m,\eps}\|_{H^1_{\mathrm{crit}}}^2 + \|\Phi_i^{m,\eps}\|_{L^2}^2 \le CM/m$ then holds simultaneously for every~$\eps \in I_m$ on the same event.
		
		Lemma~\ref{l.corrector.monotonicity} with~$U \subset V_m^\eps$ gives~$D_\a^\eps(\phi_i^\eps) \le D_\a^\eps(\Phi_i^{m,\eps})$.
		By ellipticity and the Poincar\'e inequality,
		\begin{equation}
			\label{e.phi.block}
			\|\phi_i^\eps\|_{H^1_{\mathrm{crit}}}^2 + \|\phi_i^\eps\|_{L^2}^2
			\le \frac{C}{m}
		\end{equation}
		for every~$\eps \in I_m$ on the same event.
		Define the block event
		\[
		A_m \coloneqq \left\{
		\|\phi_i^\eps\|_{H^1_{\mathrm{crit}}}^2 + \|\phi_i^\eps\|_{L^2}^2 \le \frac{M}{m}
		\text{ for every } \eps \in I_m \text{ and every } 1 \le i \le d
		\right\}\,.
		\]
		For~$M$ sufficiently large,
		\begin{equation}
			\label{e.Am.fail}
			\P[A_m^c] \le Ce^{-cm}\,.
		\end{equation}
		
		\smallskip
		
		\emph{Step~6: Deterministic error bounds.}
		Since~$\widetilde u_\mu \in C^{2,\sigma}_c(\Rd)$, the Taylor remainder~\eqref{e.Rtilde} satisfies
		\begin{equation}
			\label{e.Rtilde.bounds}
			\left|\widetilde R_\mu(x,z) - \tfrac{1}{2} z^\top D^2\widetilde u_\mu(x) z\right| \le C|z|^{2+\sigma} \text{ for } |z| \le 1\,,
			\qquad
			|\widetilde R_\mu(x,z)| \le C(1{+}|z|) \text{ for } |z| > 1\,.
		\end{equation}
		Replacing~$\a$ by its mean~$1$ in~$\mathcal{E}_{1,\mathrm{sm}}$ isolates the deterministic part.
		For~$0 < |z| \le 1$, the quadratic part of~$\widetilde R_\mu$ combined with~\eqref{e.second.moment.normalised} gives~$(\eps^d/\kappa_\eps)\sum_{\substack{z \in \eps\Zd \setminus \{0\} \\ |z| \le 1}} J(z) \widetilde R_\mu(x,z) = \frac{1}{2d}\Delta\widetilde u_\mu(x) + O(1/\kappa_\eps)$, where the remainder is~$O(1/\kappa_\eps)$ because~$(\eps^d/\kappa_\eps)\sum_{\substack{z \in \eps\Zd \\ 0 < |z| \le 1}}|z|^{-(d-\sigma)} \le C/\kappa_\eps$.
		Since~$\widetilde u_\mu = \bar u_\mu$ on~$U$ and~$f - \mu\bar u_\mu = -\frac{1}{2d}\Delta\bar u_\mu$ by the homogenized equation, the second-order term cancels.
		The large-$|z|$ tail contributes~$O(1/\kappa_\eps)$, so the deterministic part of~$\mathcal{E}_{1,\mathrm{sm}}$ satisfies
		\begin{equation}
			\label{e.B1.bound}
			\|\mathcal{E}_{1,\mathrm{sm}} \text{ with } \a \equiv 1\|_{L^2} \le \frac{C}{\kappa_\eps}\,.
		\end{equation}
		The mean-zero fluctuation~$\mathcal{F}_1^\eps \coloneqq \mathcal{E}_{1,\mathrm{sm}} - (\mathcal{E}_{1,\mathrm{sm}} \text{ with } \a \equiv 1)$ is estimated below.
		The boundary error~$\mathcal{E}_{1,\mathrm{bd}}$ is random (it depends on~$\a$), but the following bound is pathwise.
		Let~$\psi$ be supported in~$U^\eps$.
		Since~$r = 0$ in~$U$, the product~$r(x)\psi(x) = 0$ for every~$x \in \eps\Zd$: if~$x \in U$, then~$r(x) = 0$; if~$x \notin U$, then~$\psi(x) = 0$.
		The same reasoning gives~$r(x{+}z)\psi(x{+}z) = 0$.
		It follows that for every~$x \in U^\eps$ and every~$z \in \eps\Zd$,
		\[
		r(x{+}z)\psi(x) = \bigl(r(x{+}z) - r(x)\bigr)\bigl(\psi(x) - \psi(x{+}z)\bigr)\,,
		\]
		since~$r(x) = 0$ and~$r(x{+}z)\psi(x{+}z) = 0$.
		Therefore
		\[
		\eps^d \sum_{x \in U^\eps} \mathcal{E}_{1,\mathrm{bd}}(x)\psi(x)
		= -\frac{\eps^{2d}}{\kappa_\eps}\sum_{\substack{x \in U^\eps \\ z \ne 0}} \a\left(\frac{x}{\eps},\frac{x{+}z}{\eps}\right) J(z)\bigl(r(x{+}z) - r(x)\bigr)\bigl(\psi(x) - \psi(x{+}z)\bigr)\,.
		\]
		Cauchy--Schwarz and ellipticity give
		\begin{equation}
			\label{e.E1bd.CS}
			\bigl|\langle \mathcal{E}_{1,\mathrm{bd}}, \psi\rangle\bigr|
			\le C\biggl(\frac{\eps^{2d}}{\kappa_\eps}\sum_{\substack{x \in U^\eps \\ z \ne 0}} J(z) r(x{+}z)^2\biggr)^{1/2} \|\psi\|_{H^1_{\mathrm{crit}}(U^\eps)}\,,
		\end{equation}
		where we used~$r(x) = 0$ for~$x \in U^\eps$.
		To bound the first factor, reindex by~$y = x{+}z$: only~$y \notin U$ contributes, and~$|r(y)| \le C\operatorname{dist}(y,U)$ by~\eqref{e.r.lip}.
		For~$y \notin U$ with~$\delta \coloneqq \mathrm{dist}(y,U) > 0$, comparison with~$\int_U |y{-}x|^{-(d+2)} dx$ yields~$\eps^d\sum_{x \in U^\eps} J(y{-}x) \le C(\delta + \eps)^{-2}$.
		Since~$r$ has bounded support, the product~$r(y)^2 \cdot \eps^d\sum_{x \in U^\eps} J(y{-}x) \le C\delta^2 / (\delta + \eps)^{2} \le C$ is uniformly bounded.
		Summing over~$y \in \eps\Zd \setminus U$ in the support of~$r$ and dividing by~$\kappa_\eps$,
		\[
		\frac{\eps^{2d}}{\kappa_\eps}\sum_{\substack{x \in U^\eps \\ z \ne 0}} J(z) r(x{+}z)^2 \le \frac{C}{\kappa_\eps}\,.
		\]
		Combining with~\eqref{e.E1bd.CS},
		\begin{equation}
			\label{e.E1bd.bound}
			\|\mathcal{E}_{1,\mathrm{bd}}\|_{H^{-1}_{\mathrm{crit}}} \le \frac{C}{\sqrt{\kappa_\eps}}\,.
		\end{equation}
		
		\smallskip
		
		\emph{Step~7: Smooth fluctuation on a whole block.}
		For $\eps \in I_m$ and $u \in \cu_{m+L} \cap \Zd$, define
		\[
		Y_m(\eps, u) \coloneqq \frac{1}{\kappa_\eps} \sum_{w \in \Zd \setminus \{0\}} \bigl(\a(u, u{+}w) - 1\bigr) |w|^{-(d+2)} \int_0^1 (1{-}t) w^\top D^2\widetilde u_\mu\bigl(\eps(u{+}tw)\bigr) w dt\,.
		\]
		Then $Y_m(\eps, u) = \mathcal{F}_1^\eps(\eps u)$ whenever $\eps u \in U^\eps$.
		Since $\eps^d \#(U^\eps) \le C(U)$,
		\begin{equation}
			\label{e.F1.micro}
			\|\mathcal{F}_1^\eps\|_{L^2(U^\eps)} \le C \sup_{u \in \cu_{m+L} \cap \Zd} |Y_m(\eps, u)|.
		\end{equation}
		
		\emph{Step~7a: Fixed-scale concentration.}
		Write
		\[
		Y_m(\eps, u) = \sum_{w \in \Zd \setminus \{0\}} c_\eps(u,w) \bigl(\a(u, u{+}w) - 1\bigr),
		\]
		where
		\[
		c_\eps(u,w) = \frac{|w|^{-(d+2)}}{\kappa_\eps} \int_0^1 (1{-}t) w^\top D^2\widetilde u_\mu\bigl(\eps(u{+}tw)\bigr) w dt\,.
		\]
		Because $D^2\widetilde u_\mu$ is bounded and compactly supported, for every $\eps \in I_m$ and $u \in \cu_{m+L} \cap \Zd$,
		\[
		|c_\eps(u,w)| \le \frac{C}{m} |w|^{-d} \min\left(1, \frac{3^m}{|w|}\right).
		\]
		Hence
		\[
		\sum_{w \in \Zd \setminus \{0\}} |c_\eps(u,w)|^2 \le \frac{C}{m^2} \sum_{w \in \Zd \setminus \{0\}} |w|^{-2d} \min\left(1, \frac{3^m}{|w|}\right)^2 \le \frac{C}{m^2},
		\]
		uniformly in $\eps \in I_m$ and $u \in \cu_{m+L} \cap \Zd$.
		Since~$|\a(u,u{+}w) - 1| \le \lambda^{-1}$, the Hoeffding inequality (Proposition~\ref{prop:concentrationineq}, applied after rescaling by~$\lambda^{-1}$) gives
		\[
		|Y_m(\eps, u)| \le \mathcal{O}_2\left(\frac{C}{m}\right) \qquad \text{uniformly in } \eps \in I_m \text{ and } u \in \cu_{m+L} \cap \Zd\,.
		\]
		Since $\#(\cu_{m+L} \cap \Zd) \le C 3^{md}$, for $M$ large enough,
		\begin{equation}
			\label{e.F1.fixed}
			\P\left[\sup_{u \in \cu_{m+L} \cap \Zd} |Y_m(\eps, u)| > \frac{M}{4\sqrt{m}}\right] \le C 3^{md} e^{-cM^2 m} \le Ce^{-cm} \qquad \text{for every } \eps \in I_m\,.
		\end{equation}
		
		\emph{Step~7b: H\"older modulus in~$\eps$.}
		For $\eps, \eps' \in I_m$, the H\"older regularity of $D^2\widetilde u_\mu$ and the bound $|\kappa_\eps^{-1} - \kappa_{\eps'}^{-1}| \le Cm^{-2}$ give
		\begin{equation}
			\label{e.F1.modulus}
			\sup_{u \in \cu_{m+L} \cap \Zd} |Y_m(\eps, u) - Y_m(\eps', u)| \le C\bigl((3^m|\eps - \eps'|)^\sigma + m^{-1}\bigr),
		\end{equation}
		using $|\a - 1| \le C$ and $\sum_{w \in \Zd \setminus \{0\}} |w|^{-d} \min(1, 3^m/|w|) \le Cm$.
		
		\emph{Step~7c: $\eps$-net and uniform control.}
		Choose a net $N_m \subset I_m$ with mesh $\delta_m \coloneqq 3^{-m} m^{-1/(2\sigma)}$, so that $|N_m| \le Cm^{1/(2\sigma)}$.
		By~\eqref{e.F1.fixed} and a union bound over $N_m$,
		\[
		\P\left[\max_{\eta \in N_m} \sup_{u \in \cu_{m+L} \cap \Zd} |Y_m(\eta, u)| > \frac{M}{4\sqrt{m}}\right] \le Ce^{-cm}\,.
		\]
		For each $\eps \in I_m$, pick $\eta \in N_m$ with $|\eps - \eta| \le \delta_m$.
		Then~\eqref{e.F1.modulus} gives $\sup_u |Y_m(\eps, u) - Y_m(\eta, u)| \le C(m^{-1/2} + m^{-1}) \le C/\sqrt{m}$.
		Combining with~\eqref{e.F1.micro},
		\begin{equation}
			\label{e.F1.fail}
			\P\left[\sup_{\eps \in I_m} \|\mathcal{F}_1^\eps\|_{L^2(U^\eps)} > \frac{M}{\sqrt{m}}\right] \le Ce^{-cm}\,.
		\end{equation}
		
		\smallskip
		
		\emph{Step~8: Energy estimate on the good event.}
		Define the good event as the intersection of~$A_m$ with the event that~$\|\mathcal{F}_1^\eps\|_{L^2} \le M/\sqrt{m}$ for every~$\eps \in I_m$.
		By~\eqref{e.Am.fail} and~\eqref{e.F1.fail}, the complement has probability at most~$Ce^{-cm}$.
		On this event, for every~$\eps \in I_m$, all error terms are bounded by~$C/\sqrt{\kappa_\eps}$:
		\begin{itemize}
			\item the smooth error satisfies~$\|\mathcal{E}_{1,\mathrm{sm}}\|_{L^2} \le C/\kappa_\eps + M/\sqrt{m} \le C'/\sqrt{\kappa_\eps}$ (using~$\kappa_\eps \asymp m$ on~$I_m$);
			\item the boundary error satisfies~$\|\mathcal{E}_{1,\mathrm{bd}}\|_{H^{-1}_{\mathrm{crit}}} \le C/\sqrt{\kappa_\eps}$ (Step~6);
			\item the mass defect satisfies~$\mu\|\sum_{i=1}^{d} b_i\phi_i^\eps\|_{L^2} \le C\mu/\sqrt{\kappa_\eps}$.
		\end{itemize}
		For the coefficient-variation term~$\mathcal{E}_2$: expanding~$D_\a^\eps(\phi_i, b_i\psi) - D_\a^\eps(b_i\phi_i, \psi)$ and using the symmetry~$(x,z) \to (x{+}z,-z)$ gives
		\[
		\eps^d \sum_{x \in U^\eps} \mathcal{E}_2(x)\psi(x) = \sum_{i=1}^{d}\bigl(D_\a^\eps(\phi_i, b_i\psi) - D_\a^\eps(b_i\phi_i, \psi)\bigr)\,.
		\]
		The product estimate~$\|b\psi\|_{H^1_{\mathrm{crit}}}^2 \le C(\|\psi\|_{H^1_{\mathrm{crit}}}^2 + \|\psi\|_{L^2}^2)$ follows from the discrete Leibniz rule and~$|b_i(x{+}z) - b_i(x)| \le C\min(|z|,1)$.
		Cauchy--Schwarz for~$D_\a^\eps$ then gives
		\begin{align*}
			|D_\a^\eps(\phi_i, b_i\psi)| &\le C\|\phi_i\|_{H^1_{\mathrm{crit}}}(\|\psi\|_{H^1_{\mathrm{crit}}} + \|\psi\|_{L^2})\,, \\
			|D_\a^\eps(b_i\phi_i, \psi)| &\le C(\|\phi_i\|_{H^1_{\mathrm{crit}}} + \|\phi_i\|_{L^2})\|\psi\|_{H^1_{\mathrm{crit}}}\,.
		\end{align*}
		The Poincar\'e inequality absorbs~$\|\psi\|_{L^2}$ into~$\|\psi\|_{H^1_{\mathrm{crit}}}$.
		The corrector bound from~$A_m$ gives~$\|\phi_i^\eps\|_{H^1_{\mathrm{crit}}} + \|\phi_i^\eps\|_{L^2} \le C/\sqrt{\kappa_\eps}$, so~$\|\mathcal{E}_2\|_{H^{-1}_{\mathrm{crit}}} \le C/\sqrt{\kappa_\eps}$.
		It remains to close the energy estimate.
		Testing~\eqref{e.residual.decomp} against~$h^\eps$, pairing the~$L^2$-bounded terms via Cauchy--Schwarz in~$L^2(U^\eps)$ and the~$H^{-1}_{\mathrm{crit}}$-bounded terms via duality with~$H^1_{\mathrm{crit}}$, and using~$D_\a^\eps(h^\eps) \ge \frac{\lambda}{2}\|h^\eps\|_{H^1_{\mathrm{crit}}}^2$, we obtain
		\[
		\frac{\lambda}{2}\|h^\eps\|_{H^1_{\mathrm{crit}}}^2
		\le \bigl(\mu\|\textstyle\sum_{i=1}^{d} b_i\phi_i\|_{L^2} + \|\mathcal{E}_{1,\mathrm{sm}}\|_{L^2}\bigr)\|h^\eps\|_{L^2}
		+ \bigl(\|\mathcal{E}_{1,\mathrm{bd}}\|_{H^{-1}_{\mathrm{crit}}} + \|\mathcal{E}_2\|_{H^{-1}_{\mathrm{crit}}}\bigr)\|h^\eps\|_{H^1_{\mathrm{crit}}}\,.
		\]
		By the Poincar\'e inequality,~$\|h^\eps\|_{L^2} \le C\|h^\eps\|_{H^1_{\mathrm{crit}}}$.
		Dividing by~$\|h^\eps\|_{H^1_{\mathrm{crit}}}$ and using the bounds above:~$\|h^\eps\|_{H^1_{\mathrm{crit}}} \le C/\sqrt{\kappa_\eps}$.
		The triangle inequality and~\eqref{e.weps.approx} give
		\begin{equation}
			\label{e.quenched.onblock}
			\|u^\eps_\mu - \bar u_\mu\|_{L^2} \le \frac{C}{\sqrt{\kappa_\eps}} \le \frac{C}{\sqrt{|\ln\eps|}}
			\qquad \text{for every } \eps \in I_m\,.
		\end{equation}
		
		\smallskip
		
		\emph{Step~9: Minimal scale.}
		Since~$m(\eps) \asymp |\ln\eps|$, there exists a constant~$C < \infty$ (depending only on~$U$) such that the random variable
		\[
		\mathcal{X} \coloneqq C \inf\bigl\{m_0 \ge 1 : \text{the good event holds for every } m \ge m_0\bigr\}\,.
		\]
		We adopt the convention~$\inf\varnothing = \infty$.
		The estimates~\eqref{e.Am.fail} and~\eqref{e.F1.fail} give~$\P[\mathcal{X} > s] \le Ce^{-cs}$ by summing~$\sum_{m \ge s/C} Ce^{-cm}$; in particular,~$\mathcal{X} < \infty$ almost surely.
		If~$|\ln\eps| \ge \mathcal{X}$, then the good event holds at scale~$m(\eps)$, and~\eqref{e.quenched.onblock} gives~\eqref{e.quenched.rate}.
		
	\end{proof}
	
	\section{Convergence of the process}
	\label{sec:convergence}
	
	We combine the heat-kernel bounds of Appendix~\ref{sec:heatkernel} with the elliptic homogenization of Theorem~\ref{main.thm} to prove convergence of the rescaled walk.
	For $h \in C(\overline{U})$, we abbreviate $h^\eps \coloneqq h|_{U^\eps}$.
	
	\begin{proposition}[Tightness]
		\label{p.tightness}
		Let $T > 0$, and for each $\eps \in (0,1)$ let $X^\eps$ be the c\`adl\`ag Markov process on $\eps\Zd$ with generator $\mathcal{L}^\eps$.
		Assume that the initial laws $\mu_\eps \coloneqq \mathrm{Law}(X_0^\eps)$ are tight.
		Then the family $(X^\eps)_{\eps \in (0,1)}$ is tight in $D([0,T]; \Rd)$.
	\end{proposition}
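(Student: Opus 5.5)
Tightness will follow from Aldous' criterion, which requires two things: tightness of the one-dimensional marginals, and uniform control of the increments after stopping times. Both will come from a single estimate. We show that the increments of $X^\eps$ over short time intervals are small in probability, uniformly in the starting point, the environment and $\eps$. The natural tool is the strong Markov property combined with a deterministic bound on the generator, tested against a smooth cutoff.

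\emph{Step 1 (generator bound on test functions).} Fix $\eta>0$ and a smooth function $\chi\colon\Rd\to[0,1]$ with $\chi(0)=0$, $\chi=1$ outside $B_\eta$, and $\|\nabla\chi\|_\infty+\|D^2\chi\|_\infty\le C\eta^{-2}$. For $y\in\eps\Zd$ set $\chi_y(x)\coloneqq\chi(x-y)$. We claim that, uniformly in $y$, $x$, $\eps\in(0,1)$ and the environment,
\[
|\mathcal{L}^\eps\chi_y(x)|\le C\eta^{-2}.
\]
Here $C$ depends only on $d$ and $\lambda$. To prove this, split the sum defining $\mathcal{L}^\eps$ at $|z|=1$.

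For $|z|\le1$, the bound $|\chi_y(x+z)-\chi_y(x)|\le C\eta^{-2}|z|$ is too crude, because the first-order term does not cancel pathwise: the conductances are not symmetric in $z\mapsto -z$. Instead I use the quadratic bound $|\chi_y(x+z)-\chi_y(x)|\le\|\nabla\chi\|_\infty|z|$ only after symmetrizing in the energy form. Directly, this would give $\frac{\eps^d}{\kappa_\eps}\sum_{|z|\le1}J(z)|z|\asymp \eps^{-1}/\kappa_\eps$, which is not uniform. So the generator bound must be replaced by a martingale argument with the carré du champ. Write
\[
M_t=\chi_y(X_t)-\chi_y(X_0)-\int_0^t\mathcal{L}^\eps\chi_y(X_s)\,ds.
\]
The drift term is still problematic, so instead I use the quadratic function $g_y(x)=\min(|x-y|^2,\eta^2)$ together with a first-moment bound on jumps.

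\emph{Revised approach.} Decompose the process into jumps of size $\le1$ and jumps of size $>1$. The rate of jumps of size $>1$ is at most $\lambda^{-1}\frac{\eps^d}{\kappa_\eps}\sum_{|z|>1}J(z)\le C/\kappa_\eps$. Hence with probability at least $1-C\delta/\kappa_\eps$ no such jump occurs in time $\delta$. For the small-jump part, the key object is the local drift $b(x)=\frac{\eps^d}{\kappa_\eps}\sum_{|z|\le1}\a J(z)z$. This drift is random and of size up to $\eps^{-1}/\kappa_\eps$, so pathwise control fails. This is the main obstacle.

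The cleanest resolution is to invoke the off-diagonal heat-kernel bounds of Appendix~\ref{sec:heatkernel}, as the introduction indicates. I expect these to yield
\[
\sup_{x}\P^x\bigl(|X^\eps_t-x|\ge\eta\bigr)\le C\bigl(t/\eta^2+t/\kappa_\eps\bigr)
\]
for all $t\le1$, uniformly in $\eps$ and quenched in the environment. Given such an estimate, the strong Markov property at $\tau_\eps$ gives, for every $\theta\le\delta$,
\[
\P\bigl(|X^\eps_{\tau_\eps+\theta}-X^\eps_{\tau_\eps}|\ge\eta\bigr)\le\sup_x\P^x\bigl(|X^\eps_\theta-x|\ge\eta\bigr)\le C\delta\eta^{-2}.
\]
This tends to $0$ as $\delta\to0$, which is condition (2) of Aldous' criterion.

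\emph{Step 2 (marginals).} Tightness of $X^\eps_t$ follows from the tightness of $\mu_\eps$ together with the same displacement bound. Chaining over $\lceil t/\delta\rceil$ time steps gives a tail bound on $|X_t^\eps-X_0^\eps|$; alternatively, a direct heat-kernel tail bound on $[0,T]$ gives it immediately.

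The step I expect to be hardest is the uniform displacement estimate. If the appendix gives only on-diagonal bounds, I would try to derive the displacement bound via Davies' method: use the exponential weight $e^{\beta\cdot x}$ truncated at jump size $1$. The energy of this weight is controlled by $\frac{\eps^d}{\kappa_\eps}\sum_{|z|\le1}J(z)|z|^2\beta^2e^{|\beta||z|}\le C\beta^2$, which gives Gaussian-type tails at scale $\sqrt t$. The rare large jumps are then handled separately by their total rate $C/\kappa_\eps$.
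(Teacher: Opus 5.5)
Your final route is the paper's: the deterministic off-diagonal heat-kernel bound of Appendix~\ref{sec:heatkernel} gives a displacement tail uniform in the starting point, $\eps$ and the environment; the strong Markov property then gives Aldous' condition; and tightness of $\mu_\eps$ plus the same tail gives tightness of the marginals. The earlier generator and drift attempts can be dropped. The one step you state only as an expectation is the paper's Step~1, which you should carry out: sum the spatial part of \eqref{e.hk.offdiag.rescaled} over $|z|>\eta$ and use $\ln(2+|z|/\eps)\le C(\kappa_\eps+\ln(2+|z|))$. This gives $\sup_x \P_x(|X^\eps_t-x|>\eta)\le Ct(1+\ln(2+\eta))/\eta^2$, which implies the bound you guessed (since $\kappa_\eps\ge 2d$) and is all that is needed.
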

	
	\begin{proof}
		We verify Aldous' criterion~\cite[Theorem~16.10]{Billingsley1999} for tightness in $D([0,T]; \Rd)$.
		
		\smallskip
		
		\emph{Step~1: A uniform displacement estimate.}
		Fix $\eta > 0$, a time $t \in [0,T]$, and a point $x \in \eps\Zd$.
		The spatial part of~\eqref{e.hk.offdiag.rescaled} gives
		\[
		\P_x^\omega\bigl(|X_t^\eps - x| > \eta\bigr)
		\le \frac{Ct}{\kappa_\eps}
		\eps^d \sum_{\substack{z \in \eps\Zd\\|z|>\eta}}
		\frac{\ln(2+|z|/\eps)}{|z|^{d+2}}\,.
		\]
		By comparison with the integral over $\{|z| > \eta/2\}$ and the bound $\ln(2+|z|/\eps) \le C(\kappa_\eps + \ln(2+|z|))$, the lattice sum is at most $C(\kappa_\eps + \ln(2+\eta))\eta^{-2}$.
		Dividing by $\kappa_\eps$ gives
		\begin{equation}
			\label{e.displacement.tail}
			\sup_{\eps \in (0,1)}\sup_{x \in \eps\Zd}
			\P_x^\omega\bigl(|X_t^\eps - x| > \eta\bigr)
			\le \frac{Ct(1+\ln(2+\eta))}{\eta^2}\,.
		\end{equation}
		
		\smallskip
		
		\emph{Step~2: Compact containment.}
		Fix $t \in [0,T]$ and $R > 0$.
		Then
		\[
		\P_{\mu_\eps}^\omega(|X_t^\eps| > R)
		\le \mu_\eps\bigl(\{|x| > R/2\}\bigr)
		+ \sup_{|x| \le R/2} \P_x^\omega\bigl(|X_t^\eps - x| > R/2\bigr)\,.
		\]
		By~\eqref{e.displacement.tail}, the second term is at most $CT(1+\ln(2+R))/R^2$.
		Since $(\mu_\eps)_\eps$ is tight, the right side vanishes uniformly in $\eps$ as $R \to \infty$.
		
		\smallskip
		
		\emph{Step~3: The Aldous condition.}
		Fix $\eta > 0$.
		Let $\tau$ be a stopping time bounded by $T$, and let $0 \le \theta \le \delta$.
		Set $\vartheta \coloneqq (\tau + \theta) \wedge T - \tau \in [0, \delta]$.
		By the strong Markov property and~\eqref{e.displacement.tail},
		\[
		\P_{\mu_\eps}^\omega\bigl(|X_{(\tau+\theta)\wedge T}^\eps - X_\tau^\eps| > \eta \mid \mathcal{F}_\tau\bigr)
		= \P_{X_\tau^\eps}^\omega\bigl(|X_\vartheta^\eps - X_0^\eps| > \eta\bigr)
		\le \frac{C(1+\ln(2+\eta))}{\eta^2}\delta\,.
		\]
		Taking expectations and then the supremum over $\eps$, $\tau$, and $\theta$,
		\[
		\sup_{\eps \in (0,1)}
		\sup_{\tau \le T}
		\sup_{0 \le \theta \le \delta}
		\P_{\mu_\eps}^\omega\bigl(|X_{(\tau+\theta)\wedge T}^\eps - X_\tau^\eps| > \eta\bigr)
		\le \frac{C(1 + \ln(2+\eta))}{\eta^2}\delta \xrightarrow{\delta\downarrow0} 0\,.
		\]
		By Aldous' criterion, the family $(X^\eps)_{\eps \in (0,1)}$ is tight in $D([0,T]; \Rd)$.
	\end{proof}
	
	\begin{theorem}[Quenched invariance principle]
		\label{t.qip}
		Under Assumption~\textup{(U)}, let $g \in L^2(\Rd)$ be a nonnegative, compactly supported probability density.
		For each $\eps \in (0,1)$, define
		\begin{equation}
			\label{e.geps}
			g_\eps(x) \coloneqq \eps^{-d}\int_{x + [-\eps/2,\eps/2)^d} g(y) dy
			\qquad (x \in \eps\Zd)\,,
		\end{equation}
		and let $X^\eps$ be the c\`adl\`ag process on $\eps\Zd$ with generator $\mathcal{L}^\eps$ and initial law $\P(X_0^\eps = x) = \eps^d g_\eps(x)$.
		Set
		\begin{equation}
			\label{e.barsigma}
			\bar\sigma^2 \coloneqq \frac{\E[\a(0,e_1)]}{2d}\,.
		\end{equation}
		Let $W$ be a standard $d$-dimensional Brownian motion, let $X_0$ be an $\Rd$-valued random variable with law $g(x) dx$, independent of $W$, and define
		\[
		X_t \coloneqq X_0 + \sqrt{2\bar\sigma^2} W_t\,.
		\]
		Then for $\P$-almost every realization of the conductances,
		\[
		X^\eps \Rightarrow X
		\qquad\text{in } D([0,\infty); \Rd)\,.
		\]
	\end{theorem}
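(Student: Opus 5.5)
The plan is the standard route: tightness from Proposition~\ref{p.tightness}, then identification of the finite-dimensional distributions through resolvent convergence. The initial laws $\eps^d g_\eps$ converge weakly to $g(x)\,dx$ and are supported in a fixed compact set, so they are tight. Proposition~\ref{p.tightness} therefore gives tightness of $(X^\eps)$ in $D([0,T];\Rd)$ for every $T$, and hence in $D([0,\infty);\Rd)$. It remains to show that every subsequential limit has the law of $X$. By \cite[Theorem~1.6.1]{EthierKurtz1986} (Trotter--Kato type, via resolvents), this reduces to convergence of the semigroups $P^\eps_t \varphi \to \bar P_t \varphi$ for $\varphi$ in a core, e.g.\ $\varphi \in C_c^\infty(\Rd)$, uniformly on compacts in space. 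Here $\bar P_t$ is the heat semigroup with generator $\bar\sigma^2\Delta$.

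\textbf{Step 1: resolvent convergence on $\Rd$.} Fix $\mu>0$ and $\varphi\in C_c^\infty$, and set $f\coloneqq(\mu-\bar\sigma^2\Delta)\varphi$. I would compare the whole-space resolvent $(\mu-\mathcal L^\eps)^{-1}f$ with the Dirichlet solution $u^\eps_\mu$ on a large ball $U=B_R$. Theorem~\ref{main.thm} controls $u^\eps_\mu-\bar u_\mu$ in $L^2(U^\eps)$ almost surely for $|\ln\eps|\ge\mathcal X$. To get an almost sure statement simultaneously for all $R$ in a countable set and all $\varphi$ in a countable dense family, I take a countable intersection of the full-measure events, each with its own $\mathcal X$.

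The discrepancy between the Dirichlet and whole-space resolvents is the probability that the walk leaves $B_R$ before an exponential time. This is handled by the displacement tail \eqref{e.displacement.tail}, which gives a bound of order $\ln R/R^2$ uniformly in $\eps$. On the continuum side, $\bar u_\mu\to(\mu-\bar\sigma^2\Delta)^{-1}f=\varphi$ as $R\to\infty$. Note that $f$ is smooth but not compactly supported; I would truncate it, using its exponential decay for $f$ built from $\varphi$.

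\textbf{Step 2: upgrade from $L^2$ to pointwise.} The main obstacle is that Euler/Trotter--Kato needs sup-norm convergence, while Theorem~\ref{main.thm} gives only $L^2$. I would use the heat-kernel bounds of Appendix~\ref{sec:heatkernel}, i.e.\ on-diagonal boundedness of the transition density at times of order one after rescaling, to regularize. Write $(\mu-\mathcal L^\eps)^{-1}=\int_0^\infty e^{-\mu t}P^\eps_t\,dt$ and split at a small time $\delta$:
\begin{itemize}
\item For $t\ge\delta$, the density bound gives $\|P^\eps_t\|_{L^2\to L^\infty}\le C\delta^{-d/2}$. Since $P_\delta^\eps$ commutes with the resolvent, applying it to the resolvent difference converts the $L^2$ error into an $L^\infty$ error.
\item For $t<\delta$, the contribution is $O(\delta)$, using that $\varphi$ is uniformly continuous and \eqref{e.displacement.tail}.
\end{itemize}
Alternatively, for weak convergence of the finite-dimensional distributions with initial density $g\in L^2$, it may suffice to test against $g_\eps$ and iterate $L^2$ convergence. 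This avoids sup norms, since the Markov property only requires $P^\eps_{t}$-multiplied quantities paired with $L^2$ densities. I would try that first, via the contraction property of $P^\eps_t$ on $L^2$.

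\textbf{Step 3: from resolvents to semigroups and finite-dimensional distributions.} The Euler approximation $P^\eps_t=\lim_n(1-\tfrac tn\mathcal L^\eps)^{-n}$ is uniform in $\eps$ on the relevant functions, by contractivity and the generator bound for $\varphi\in C_c^\infty$: $\|\mathcal L^\eps\varphi\|_\infty\le C$ by \eqref{e.second.moment.normalised} and Taylor expansion. Combined with resolvent convergence (iterated $n$ times), this yields $P_t^\eps\varphi\to\bar P_t\varphi$. Applying the Markov property iteratively gives convergence of the finite-dimensional distributions under initial law $\eps^d g_\eps$. Together with tightness, this identifies the limit as $X_0+\sqrt{2\bar\sigma^2}\,W$.
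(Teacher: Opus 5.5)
\textbf{Step~3 relies on a false estimate.} You justify the uniform-in-$\eps$ Euler approximation by $\|\mathcal{L}^\eps\varphi\|_\infty\le C$ for $\varphi\in C_c^\infty$, via Taylor expansion and~\eqref{e.second.moment.normalised}. That identity concerns the unweighted kernel. With random conductances the first-order Taylor term
\[
\frac{\eps^d}{\kappa_\eps}\sum_{z\in\eps\Zd\setminus\{0\}}\a\Bigl(\frac{x}{\eps},\frac{x+z}{\eps}\Bigr)J(z)\,\nabla\varphi(x)\cdot z
\]
does not cancel, because the conductances of $\{x,x+z\}$ and $\{x,x-z\}$ are independent. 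Its nearest-neighbour part alone is $(\eps\kappa_\eps)^{-1}\sum_{i}\bigl(\a(\frac{x}{\eps},\frac{x}{\eps}+e_i)-\a(\frac{x}{\eps},\frac{x}{\eps}-e_i)\bigr)\partial_i\varphi(x)$. This has order $(\eps|\ln\eps|)^{-1}\to\infty$ whenever $\Var(\a(0,e_1))>0$. It is exactly the drift $\mathcal{L}^\eps\ell_p=\eps^d\kappa_\eps^{-1}B^\eps_p$ of Remark~\ref{rem:nu-lower-bound}, which the corrector exists to cancel. So no uniform generator bound holds on smooth test functions, and Step~3 fails as written.

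The step is repairable. Put the generator bound on $f_\eps\coloneqq(\mu-\mathcal{L}^\eps)^{-1}f$, with $f=(\mu-\bar\sigma^2\Delta)\varphi$, instead of on $\varphi$. Then $\mathcal{L}^\eps f_\eps=\mu f_\eps-f$ is bounded by $2\|f\|_\infty$, and it tends to $\bar\sigma^2\Delta\varphi$ once Steps~1--2 give $f_\eps\to\varphi$. This is precisely the generator-convergence hypothesis of the Trotter--Kato theorem \cite[Theorem~1.6.1]{EthierKurtz1986} you cite, so invoking it properly makes Step~3 unnecessary. Alternatively, $(I-\frac{t}{n}\mathcal{L}^\eps)^{-n}\varphi$ is the average of $P^\eps_s\varphi$ over $s$ distributed as a sum of $n$ independent exponentials of mean $t/n$, which concentrates at $t$. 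Moreover $\sup_\eps\|P^\eps_h\varphi-\varphi\|_\infty\to0$ as $h\to0$ by~\eqref{e.displacement.tail} and uniform continuity.

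The same care is needed in Step~2. Write $f_\eps-\varphi=P^\eps_\delta(f_\eps-\varphi)+(I-P^\eps_\delta)f_\eps-(I-P^\eps_\delta)\varphi$. Bound the middle term by $\delta\|\mathcal{L}^\eps f_\eps\|_\infty$ and the last by~\eqref{e.displacement.tail}, never by $\delta\|\mathcal{L}^\eps\varphi\|_\infty$.

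\textbf{Comparison with the paper.} With these repairs your route works, but it differs from the paper's, which never leaves $L^2$.
\begin{enumerate}
\item The paper works directly with the killed generators on $B_R$. These are exactly the Dirichlet problems of Theorem~\ref{main.thm}, so no whole-space comparison is needed.
\item It passes from resolvents to semigroups by the Euler scheme. The error $\sup_{\lambda\ge0}|e^{-t\lambda}-(1+t\lambda/n)^{-n}|$ is uniform over all nonnegative self-adjoint operators by the spectral theorem, so reversibility replaces any generator bound.
\item It obtains the killed finite-dimensional distributions by pairing with $g_\eps\in L^2$.
\item It removes the killing using $\P(\tau^\eps_{B_R}\le T)\to\P(\tau_{B_R}\le T)$, which follows from the same convergence.
\end{enumerate}
This is the ``alternative'' you mention in Step~2, and it is why $g\in L^2$ is assumed.

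Your sup-norm route costs more. You need a strong-Markov maximal argument to turn the fixed-time bound~\eqref{e.displacement.tail} into exit-time control. You need $L^2\to L^\infty$ heat-kernel smoothing in which the $L^2$ error is measured only on a fixed neighbourhood of the compact set, since crude pointwise exit-time bounds integrated over all of $B_R$ need not be small. You also need tail control to work in $C_0$. In exchange, your route yields locally uniform semigroup convergence. Hence it gives the invariance principle from deterministic starting points, which the paper's argument does not.

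Minor point: $f=(\mu-\bar\sigma^2\Delta)\varphi$ is already compactly supported, so no truncation is needed.
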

	
	\begin{proof}
		Since $g$ is compactly supported, the initial laws are tight and converge weakly to $g(x) dx$.
		By Proposition~\ref{p.tightness}, for every $T > 0$ the family $(X^\eps)_\eps$ is tight in $D([0,T]; \Rd)$.
		It therefore suffices to show that the finite-dimensional distributions converge.
		
		Fix a bounded $C^{2,\sigma}$ domain $U \subset \Rd$.
		Write $A_\eps \coloneqq -\mathcal{L}_U^\eps \ge 0$ on $L^2(U^\eps)$ and $A \coloneqq -\bar\sigma^2 \Delta_U \ge 0$ on $L^2(U)$ for the killed generators, and set $P_t^{\eps,U} \coloneqq e^{-tA_\eps}$ and $P_t^U \coloneqq e^{-tA}$.
		For $f \in C^{0,\sigma}(\overline{U})$ and $\mu > 0$, Theorem~\ref{main.thm} gives
		\begin{equation}
			\label{e.resolvent.conv}
			\bigl\|(\mu + A_\eps)^{-1}f^\eps - \bigl((\mu + A)^{-1}f\bigr)^\eps\bigr\|_{L^2(U^\eps)} \xrightarrow{\eps \to 0} 0
		\end{equation}
		on an event of probability one.
		Intersecting over a countable dense subset of $C_c^\infty(U)$, all rational $\mu > 0$, and all integers $R \ge 1$ (taking $U = B_R$), we obtain a single full-probability event on which~\eqref{e.resolvent.conv} holds simultaneously for every such triple.
		Fix $\omega$ in this event.
		
		\smallskip
		
		\emph{Step~1: From resolvent to killed semigroup convergence.}
		Fix $U = B_R$.
		The resolvent identity $\|(\mu + A_\eps)^{-1} - (\nu + A_\eps)^{-1}\|_{\mathrm{op}} \le |\mu - \nu|/(\mu\nu)$ (and the same for $A$) extends~\eqref{e.resolvent.conv} from rational to every $\mu > 0$.
		For $h \in C(\overline{U})$, approximate by $f_n \in C_c^\infty(U)$ with $f_n \to h$ in $L^2(U)$.
		The resolvent bound $\|(\mu + A_\eps)^{-1}\|_{\mathrm{op}} \le 1/\mu$ gives
		\[
		\limsup_{\eps \to 0} \bigl\|(\mu + A_\eps)^{-1}h^\eps - \bigl((\mu + A)^{-1}h\bigr)^\eps\bigr\|_{L^2(U^\eps)} \le \frac{2}{\mu}\|h - f_n\|_{L^2(U)}\,,
		\]
		where we used that $\|\psi^\eps\|_{L^2(U^\eps)} \to \|\psi\|_{L^2(U)}$ for every $\psi \in C(\overline{U})$.
		Letting $n \to \infty$, we conclude that~\eqref{e.resolvent.conv} is satisfied for every $h \in C(\overline{U})$ and every $\mu > 0$.
		
		Now fix $t > 0$ and $n \in \mathbb{N}$, and set $S_{\eps,n} \coloneqq (I + \tfrac{t}{n}A_\eps)^{-1}$ and $S_n \coloneqq (I + \tfrac{t}{n}A)^{-1}$.
		Since $S_{\eps,n} = \mu(\mu + A_\eps)^{-1}$ with $\mu = n/t$, the resolvent convergence gives
		\[
		\|S_{\eps,n} h^\eps - (S_n h)^\eps\|_{L^2(U^\eps)} \to 0
		\qquad \text{for every } h \in C(\overline{U})\,.
		\]
		A straightforward induction on $k$ extends this to $S_{\eps,n}^k$: the identity
		\[
		S_{\eps,n}^{k+1}h^\eps - (S_n^{k+1}h)^\eps
		= S_{\eps,n}\bigl(S_{\eps,n}^k h^\eps - (S_n^k h)^\eps\bigr)
		+ \bigl(S_{\eps,n}(S_n^k h)^\eps - (S_n^{k+1}h)^\eps\bigr)
		\]
		reduces the claim to the case $k = 1$ applied to $S_n^k h \in C(\overline{U})$ (by elliptic regularity) and the $L^2$-contractivity of $S_{\eps,n}$.
		
		By the spectral theorem, the Euler approximation error satisfies
		\[
		\|e^{-tB} - (I + \tfrac{t}{n}B)^{-n}\|_{\mathrm{op}}
		\le \delta_n(t) \coloneqq \sup_{\lambda \ge 0}\bigl|e^{-t\lambda} - (1 + \tfrac{t\lambda}{n})^{-n}\bigr|
		\xrightarrow{n \to \infty} 0
		\]
		for every nonnegative self-adjoint operator $B$.
		Combining with the induction at $k = n$,
		\begin{align*}
			\limsup_{\eps \to 0}\|P_t^{\eps,U} h^\eps - (P_t^U h)^\eps\|_{L^2(U^\eps)}
			&\le \delta_n(t)\|h\|_{L^2(U)}
			+ \limsup_{\eps \to 0}\|S_{\eps,n}^n h^\eps - (S_n^n h)^\eps\|_{L^2(U^\eps)} \\
			&\quad + \|(S_n^n - P_t^U)h\|_{L^2(U)}
			\le 2\delta_n(t)\|h\|_{L^2(U)}\,.
		\end{align*}
		Letting $n \to \infty$ yields the killed semigroup convergence:
		\begin{equation}
			\label{e.semigroup.convergence}
			\|P_t^{\eps,U} h^\eps - (P_t^U h)^\eps\|_{L^2(U^\eps)} \xrightarrow{\eps \to 0} 0
			\qquad \text{for every } t \ge 0 \text{ and every } h \in C(\overline{U})\,.
		\end{equation}
		
		\smallskip
		
		\emph{Step~2: Killed finite-dimensional distributions.}
		Fix $T > 0$, times $0 < t_1 < \cdots < t_m \le T$, and test functions $\phi_1, \ldots, \phi_m \in C_b(\Rd)$.
		Choose an integer $R$ large enough that $\operatorname{supp} g \Subset B_{R/2}$, and set $U \coloneqq B_R$.
		Let $\tau_U^\eps$ and $\tau_U$ denote the exit times from $U$.
		Since $P_t^U$ on a ball is Feller (see, e.g.,~\cite[Section~4.2]{EthierKurtz1986}), the backward recursion
		\begin{equation}
			\label{e.Xi.recursion}
			\Xi_m \coloneqq \phi_m|_{\overline{U}}\,,
			\qquad
			\Xi_{j-1} \coloneqq \phi_{j-1}|_{\overline{U}} \cdot P_{t_j - t_{j-1}}^U \Xi_j
			\quad (j = m, \ldots, 2)
		\end{equation}
		produces functions $\Xi_j \in C(\overline{U})$ for every $1 \le j \le m$.
		By the killed Markov property,
		\begin{equation}
			\label{e.killed.markov}
			\E_{\mu_\eps}^{\eps,\omega}
			\biggl[\prod_{j=1}^m \phi_j(X_{t_j}^\eps)\mathbf{1}_{\{t_m < \tau_U^\eps\}}\biggr]
			= \langle g_\eps, P_{t_1}^{\eps,U}\bigl(\phi_1|_{U^\eps} \cdot P_{t_2-t_1}^{\eps,U}\bigl(\cdots \phi_{m-1}|_{U^\eps} \cdot P_{t_m-t_{m-1}}^{\eps,U}(\phi_m|_{U^\eps})\cdots\bigr)\bigr) \rangle_{L^2(U^\eps)}\,.
		\end{equation}
		The right side has the same recursive structure as~\eqref{e.Xi.recursion}, with~$P_t^U$ replaced by~$P_t^{\eps,U}$ and restrictions to~$\overline{U}$ replaced by restrictions to~$U^\eps$.
		By~\eqref{e.semigroup.convergence}, $L^2$-contractivity of~$P_t^{\eps,U}$, and induction on~$m - j$, the discrete recursion converges to its continuum counterpart: for every~$2 \le j \le m$,
		\[
		\|P_{t_j-t_{j-1}}^{\eps,U}\bigl(\text{discrete } \Xi_j^\eps\bigr) - (P_{t_j-t_{j-1}}^U \Xi_j)^\eps\|_{L^2(U^\eps)} \to 0\,.
		\]
		Applying this $m$ times (once per factor in~\eqref{e.killed.markov}) and using $\|g_\eps\|_{L^2(U^\eps)} \le \|g\|_{L^2}$ (Jensen),
		\[
		\langle g_\eps, P_{t_1}^{\eps,U}(\cdots) \rangle_{L^2(U^\eps)}
		= \langle g_\eps, (P_{t_1}^U \Xi_1)^\eps \rangle_{L^2(U^\eps)} + o(1)\,.
		\]
		Since $P_{t_1}^U \Xi_1 \in C(\overline{U})$ and $g$ is compactly supported in the interior of $U$, Riemann sum convergence gives
		\[
		\eps^d \sum_{x \in U^\eps} g_\eps(x) (P_{t_1}^U \Xi_1)(x)
		\to \int_U g \cdot P_{t_1}^U \Xi_1 dx
		= \E\biggl[\prod_{j=1}^m \phi_j(X_{t_j})\mathbf{1}_{\{t_m < \tau_U\}}\biggr]\,,
		\]
		where the last identity follows from the Markov property for killed Brownian motion.
		
		\smallskip
		
		\emph{Step~3: Removal of the killing and conclusion.}
		The difference between the killed and unkilled expectations is bounded by $\prod_{j=1}^m \|\phi_j\|_\infty$ times $\P_{\mu_\eps}^{\eps,\omega}(\tau_U^\eps \le T)$, and the same bound applies for $X$.
		Taking $m = 1$, $t_1 = T$, and $\phi_1 \equiv 1$ in Step~2 gives $\P_{\mu_\eps}^{\eps,\omega}(\tau_U^\eps > T) \to \P(\tau_U > T)$.
		Therefore
		\[
		\limsup_{\eps \to 0}
		\biggl|\E_{\mu_\eps}^{\eps,\omega}\biggl[\prod_{j=1}^m \phi_j(X_{t_j}^\eps)\biggr]
		- \E\biggl[\prod_{j=1}^m \phi_j(X_{t_j})\biggr]\biggr|
		\le 2\prod_{j=1}^m \|\phi_j\|_\infty \cdot \P(\tau_U \le T)\,.
		\]
		Letting $R \to \infty$ and using $\P(\tau_{B_R} \le T) \to 0$, Steps~1 and~2 together give convergence of the finite-dimensional distributions $(X_{t_1}^\eps, \ldots, X_{t_m}^\eps) \Rightarrow (X_{t_1}, \ldots, X_{t_m})$ for every $0 < t_1 < \cdots < t_m \le T$.
		Since $X$ has continuous sample paths, the limit process has no fixed times of discontinuity, so the tightness from Proposition~\ref{p.tightness} combined with the convergence of all finite-dimensional distributions to those of $X$ implies $X^\eps \Rightarrow X$ in $D([0,T]; \Rd)$; this is the standard criterion for weak convergence in the Skorokhod space with a continuous limit~\cite[Theorem~13.1]{Billingsley1999}.
		Because $T > 0$ was arbitrary, this yields convergence in $D([0,\infty); \Rd)$.
		This completes the proof of Theorem~\ref{t.qip}, and hence of Theorem~\ref{t.walk.intro}.
	\end{proof}
	
	\appendix
	\section{Lattice sums}
	\label{sec:lattice}
	
	\begin{proposition} \label{sumesonthelattice}
		Let $\alpha \in [0,\infty)$. Then there exists a constant $C \coloneqq C(\alpha, d) < \infty$ such that, for every $\eps \in (0,1)$,
		\begin{equation}
			\label{e.lattice.upper}
			\eps^d \sum_{\substack{z \in \eps \Zd \setminus \{0\} \\ |z| \leq 1 }} \frac{1}{|z|^\alpha} \leq
			\left\{ \begin{aligned}
				C  &\quad\text{if }\alpha < d\,, \\
				C (1 + \left| \ln \eps \right|) &\quad\text{if }\alpha = d\,, \\
				C \eps^{d - \alpha} &\quad\text{if }\alpha > d\,.
			\end{aligned} \right.
		\end{equation}
		and for every $\alpha > d$,
		\begin{equation} \label{e.lattice.upperlong}
			\eps^d \sum_{\substack{z \in \eps \Zd \\ |z| \geq 1 }} \frac{1}{|z|^\alpha} \leq C
		\end{equation} 
		Additionally, for every $\eps \in (0,1/2]$ and every integer $1 \le i \le d$,
		\begin{equation}
			\label{e.lattice.diagonal}
			\frac{\eps^d}{\left| \ln \eps \right|} \sum_{\substack{z \in \eps \Zd \setminus \{0\} \\ |z| \leq 1}} J(z)  z_i^2 = \frac{\eps^d}{d \left| \ln \eps \right|} \sum_{\substack{z \in \eps \Zd \setminus \{0\} \\ |z| \leq 1}} J(z)  |z|^2 =  V_d + O\left(\frac{1}{\left| \ln \eps \right|}\right)\,,
		\end{equation}
		and for every pair of distinct integers $1 \le i, j \le d$ with $i \ne j$,
		\begin{equation}
			\label{e.lattice.mixed}
			\frac{\eps^d}{\left| \ln \eps \right|} \sum_{\substack{z \in \eps \Zd \setminus \{0\} \\ |z| \leq 1}} J(z) z_i z_j = 0\,.
		\end{equation}
	\end{proposition}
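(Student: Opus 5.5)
We will compare the lattice sums with the corresponding integrals over $\Rd$, using the rescaled lattice cells $Q_z \coloneqq z + [-\eps/2,\eps/2)^d$, which tile $\Rd$ and satisfy $|Q_z| = \eps^d$. For $z \in \eps\Zd\setminus\{0\}$ and $y \in Q_z$ we have $|y - z| \le \sqrt d\,\eps/2$, and also $|z| \ge \eps$. Hence $|y| \asymp |z|$ with constants depending only on $d$: indeed $|y| \le |z| + \sqrt d\,\eps/2 \le (1+\sqrt d/2)|z|$. For the reverse inequality one compares in the sup norm, $\|y\|_\infty \ge \|z\|_\infty - \eps/2 \ge \|z\|_\infty/2$. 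This gives $|z|^{-\alpha} \le C\,\eps^{-d}\int_{Q_z}|y|^{-\alpha}\,dy$.

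\textbf{Upper bounds.} All cells $Q_z$ with $0<|z|\le1$ lie in $\{c\eps\le |y| \le 2\}$ for some $c=c(d)>0$. Therefore
\[
\eps^d\sum_{0<|z|\le1}|z|^{-\alpha}\le C\int_{c\eps\le|y|\le2}|y|^{-\alpha}\,dy = C\,dV_d\int_{c\eps}^2 r^{d-1-\alpha}\,dr .
\]
The right-hand integral is bounded for $\alpha<d$, is $O(1+|\ln\eps|)$ for $\alpha=d$, and is $O(\eps^{d-\alpha})$ for $\alpha>d$. This proves \eqref{e.lattice.upper}. For \eqref{e.lattice.upperlong}, the cells with $|z|\ge1$ lie in $\{|y|\ge c\}$, and $\int_{|y|\ge c}|y|^{-\alpha}\,dy<\infty$ when $\alpha>d$.

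\textbf{Mixed terms.} The identity \eqref{e.lattice.mixed} follows by symmetry. The reflection $z_i\mapsto -z_i$ preserves $\eps\Zd\setminus\{0\}$, $|z|$ and $J$, but flips the sign of $z_iz_j$. The sum converges absolutely by \eqref{e.lattice.upper} with $\alpha=d$, so it equals its own negative and vanishes.

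\textbf{Diagonal terms.} Permutation invariance of the coordinates gives equality of the $d$ diagonal sums, so each equals $\frac1d\sum J(z)|z|^2=\frac1d\sum|z|^{-d}$. It remains to show
\[
\eps^d\sum_{0<|z|\le1}|z|^{-d}=dV_d|\ln\eps|+O(1),
\]
which is the main step, since a sharp constant is needed rather than just an upper bound. We plan to write
\[
\eps^d|z|^{-d}=\int_{Q_z}|y|^{-d}\,dy+E_z .
\]
By the mean value theorem, $\bigl||z|^{-d}-|y|^{-d}\bigr|\le C\eps|z|^{-d-1}$ on $Q_z$, so $|E_z|\le C\eps^{d+1}|z|^{-d-1}$. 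By \eqref{e.lattice.upper} with $\alpha=d+1$, the errors sum to $O(\eps\cdot\eps^{-1})=O(1)$.

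The union $\bigcup_{0<|z|\le1}Q_z$ differs from the annulus $A_\eps\coloneqq\{\eps\le|y|\le1\}$ only in two places. The first is a region near the origin, contained in $\{|y|\le C\eps\}$ and containing nothing below $c\eps$. The second is a boundary layer $\{1-C\eps\le|y|\le1+C\eps\}$. The symmetric difference near the origin contributes at most $\int_{c\eps\le|y|\le C\eps}|y|^{-d}\,dy=O(1)$, with the cell $Q_0$ excluded since $z\neq0$. The boundary layer contributes $O(\eps)$. Finally,
\[
\int_{A_\eps}|y|^{-d}\,dy=dV_d|\ln\eps|,
\]
which gives the claim. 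Dividing by $d|\ln\eps|$ yields $V_d+O(1/|\ln\eps|)$ for $\eps\le1/2$.
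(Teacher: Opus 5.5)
Your argument is correct and is essentially the paper's proof. Both compare the lattice sum with $\int |y|^{-d}\,dy$ over lattice cells, bound the per-cell error by $O(|z|^{-d-1})$ so that the errors sum to $O(1)$, evaluate the annulus integral in spherical coordinates to get $dV_d|\ln\eps|+O(1)$, and obtain the mixed and diagonal identities from reflection and permutation symmetry. The differences are cosmetic: you work on $\eps\Zd$ instead of rescaling to $\Zd$, use cell--integral comparison rather than shell counting for the upper bounds, and treat cells near the origin via your sup-norm comparability rather than splitting off $|w|<2\sqrt d$. The only slip is that the outer radius $2$ should be $1+\sqrt d/2$ (the stated bound fails for $d\ge 5$ and $\eps$ near $1$), which changes nothing.
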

	
	\begin{proof}
		Substituting $z = \eps w$ with $w \in \Zd \setminus \{0\}$ and $|w| \le 1/\eps$ reduces every sum to a sum over the integer lattice:
		\begin{equation}
			\label{e.lattice.sub}
			\eps^d \sum_{\substack{z \in \eps \Zd \setminus \{0\} \\ |z| \le 1}} |z|^{-\alpha}
			=
			\eps^{d-\alpha} \sum_{\substack{w \in \Zd \setminus \{0\} \\ |w| \le 1/\eps}} |w|^{-\alpha}\,.
		\end{equation}
		
		\emph{Proof of~\eqref{e.lattice.upper}.}
		The shell counting bound $|\{w \in \Zd : r \le |w| < r+1\}| \le Cr^{d-1}$ gives
		\[
		\sum_{\substack{w \in \Zd \setminus \{0\} \\ |w| \le R}} |w|^{-\alpha}
		\le
		C \sum_{k=1}^{\lceil R \rceil} k^{d-1-\alpha}
		\le
		C
		\begin{cases}
			R^{d-\alpha} &\text{if } \alpha < d\,,\\
			\ln R + 1 &\text{if } \alpha = d\,,\\
			1 &\text{if } \alpha > d\,.
		\end{cases}
		\]
		With $R = 1/\eps$ in~\eqref{e.lattice.sub}: if $\alpha < d$, then $\eps^{d-\alpha} R^{d-\alpha} = 1$; if $\alpha = d$, then $\eps^0(\ln(1/\eps)+1) \le C(1 + \left| \ln\eps \right|)$; if $\alpha > d$, then $\eps^{d-\alpha} \cdot 1 = \eps^{d-\alpha}$. The inequality~\eqref{e.lattice.upperlong} follows from a similar computation.
		
		\emph{Proof of~\eqref{e.lattice.diagonal} and~\eqref{e.lattice.mixed}.}
		Since $J(z)|z|^2 = |z|^{-d}$, the lattice $\eps\Zd$ is invariant under the reflection $z_i \mapsto -z_i$.
		Under this reflection, $|z|^{-d}$ and $z_j/|z|$ (for $j \ne i$) are unchanged, while $z_i/|z|$ flips sign.
		Therefore the sum $\sum_{z \in \eps\Zd \setminus \{0\},|z| \le 1} J(z) z_i z_j$ vanishes when $i \ne j$, which proves~\eqref{e.lattice.mixed}.
		By the permutation symmetry $z_i \leftrightarrow z_j$ of $\eps\Zd$, all diagonal terms $\sum_{z \in \eps\Zd \setminus \{0\},|z| \le 1} J(z) z_i^2$ are equal, giving the first equality in~\eqref{e.lattice.diagonal}.
		
		For the second equality, set $R = 1/\eps$ and use~\eqref{e.lattice.sub} with $\alpha = d$.
		For each $w \in \Zd$ with $|w| \ge 2\sqrt{d}$ and each $x \in w + [-1/2, 1/2)^d$, the triangle inequality gives $\bigl||x| - |w|\bigr| \le \sqrt{d}/2$, so $|x|^{-d} = |w|^{-d} + O(|w|^{-d-1})$.
		Integrating over the unit cube and summing,
		\begin{equation}
			\label{e.lattice.integral}
			\sum_{\substack{w \in \Zd \\ 2\sqrt{d} \le |w| \le R}} |w|^{-d}
			=
			\int_{A_R} |x|^{-d} dx + \sum_{\substack{w \in \Zd \\ 2\sqrt{d} \le |w| \le R}} O(|w|^{-d-1})\,,
		\end{equation}
		where $A_R \coloneqq \bigcup \{w + [-1/2, 1/2)^d : w \in \Zd,\ 2\sqrt{d} \le |w| \le R\}$.
		The series $\sum_{w \in \Zd \setminus \{0\}} |w|^{-d-1}$ converges in every dimension $d \ge 1$, so the error sum in~\eqref{e.lattice.integral} is $O(1)$.
		The set $A_R$ is contained in the annulus $\{2\sqrt{d} - \sqrt{d}/2 \le |x| \le R + \sqrt{d}/2\}$ and contains $\{2\sqrt{d} + \sqrt{d}/2 \le |x| \le R - \sqrt{d}/2\}$, so passing to spherical coordinates,
		\[
		\int_{A_R} |x|^{-d} dx = d V_d \ln R + O(1)\,.
		\]
		The finite sum over $|w| < 2\sqrt{d}$ is bounded by a constant depending only on~$d$. Combining these,
		\begin{equation}
			\label{e.shell.asymp}
			\sum_{\substack{w \in \Zd \setminus \{0\} \\ |w| \le R}} |w|^{-d}
			=
			d V_d \ln R + O(1)\,.
		\end{equation}
		Dividing~\eqref{e.shell.asymp} by $d\left| \ln\eps \right| = d\ln R$ gives $V_d + O(1/\ln R)$.
	\end{proof}
	
	\section{Heat kernel bounds}
	\label{sec:heatkernel}
	
	We collect heat kernel upper bounds for the generator, restated here for convenience:
	\[
	(Lf)(x) = \sum_{z \in \Zd \setminus \{0\}} \a(x, x{+}z) |z|^{-(d+2)} \bigl(f(x{+}z) - f(x)\bigr)\,.
	\]
	The bounds are deterministic (they are valid for every fixed realization of the conductances satisfying $\lambda \le \a(x,y) \le \lambda^{-1}$) and follow from results in the literature.
	The on-diagonal bound follows from Murugan and Saloff-Coste~\cite{MuruganSaloffCoste2015} (see also~\cite{MuruganSaloffCoste2017,MuruganSaloffCoste2019}); the off-diagonal bound is then a consequence of the general heat kernel framework for symmetric pure-jump Dirichlet forms developed by Chen, Kumagai, and Wang~\cite{ChenKumagaiWang2019}.
	All constants below depend only on the ellipticity $\lambda$ and the dimension $d$.
	
	\begin{proposition}[On-diagonal heat kernel bound {\cite[Theorem~1.1]{MuruganSaloffCoste2015}}]
		\label{p.heatkernel}
		Let $\lambda \in (0,1]$ and let $\a = \{\a(x,y)\}$ be symmetric conductances on $\Zd$ satisfying $\lambda \le \a(x,y) \le \lambda^{-1}$.
		Let $p_t(x,y)$ denote the transition kernel of the continuous-time Markov chain with generator
		\[
		(Lf)(x) = \sum_{z \in \Zd \setminus \{0\}} \a(x, x{+}z) |z|^{-(d+2)} \bigl(f(x{+}z) - f(x)\bigr)\,.
		\]
		Then there exists $C = C(d,\lambda) < \infty$ such that for every $t > 0$ and every $x$ and $y$ in $\Zd$,
		\begin{equation}
			\label{e.hk.upper}
			p_t(x,y) \le \frac{C}{\bigl(1 + t \log(2{+}t)\bigr)^{d/2}}\,.
		\end{equation}
	\end{proposition}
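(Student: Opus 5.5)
The bound \eqref{e.hk.upper} is deterministic, so I would prove it by the Nash-inequality method for the Dirichlet form
\[
\mathcal{E}(f,f)=\tfrac12\sum_{x,z}\a(x,x{+}z)|z|^{-(d+2)}\bigl(f(x{+}z)-f(x)\bigr)^2 .
\]
Ellipticity gives $\mathcal{E}\asymp \mathcal{E}_0$, where $\mathcal{E}_0$ is the form with $\a\equiv1$. It therefore suffices to establish a Nash inequality for $\mathcal{E}_0$ with rate function $\Phi(r)\asymp r^2\log(2+r)$. This is the time–space relation $t\asymp\Phi(r)$, equivalently $r\asymp\sqrt{t\log t}$. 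The statement then follows from the standard Nash/Carlen–Kusuoka–Stroock argument, or directly from \cite[Theorem~1.1]{MuruganSaloffCoste2015}, whose hypotheses are comparability of jump kernels and volume doubling on $\Zd$.

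\textbf{Step 1: local Poincaré (spectral) inequality at scale $r$.} For $f$ finitely supported and the partition of $\Zd$ into cubes $Q$ of side $r$, I would show
\[
\sum_x \bigl(f(x)-f_{Q(x)}\bigr)^2\le \frac{C r^2}{\log(2+r)}\,\mathcal{E}_0(f,f).
\]
The proof reuses the mechanism of Lemma~\ref{l.single.scale} and Proposition~\ref{p.poincare} with $\eps=1$. For each triadic scale $3^k\le r$, averaging over the slab $S_k$ controls oscillation at scale $3^k$ by shell-$k$ energy, with a prefactor $\asymp r^2$ independent of $k$. Summing over the $\asymp\log r$ disjoint shells gains the factor $1/\log r$. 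The difference here is that Lemma~\ref{l.single.scale} used exit from a domain, whereas now one needs the deviation from the cube average. I would handle this with the standard neighbour-average comparison: compare $f$ with its average over random slab increments, as in the usual proof of the Poincaré inequality on cubes.

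\textbf{Step 2: Nash inequality.} Combine
\[
\|f\|_2^2\le \|f-f_{Q}\|_2^2+\sum_Q |Q|\,f_Q^2
\quad\text{with}\quad
\sum_Q|Q|f_Q^2\le r^{-d}\|f\|_1^2 .
\]
This gives
\[
\|f\|_2^2\le \frac{Cr^2}{\log(2+r)}\,\mathcal{E}_0(f,f)+r^{-d}\|f\|_1^2 .
\]
Optimizing in $r$ gives a Nash-type inequality with rate $\Phi$ as above.

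\textbf{Step 3: ODE argument.} For $u(t)=\|P_t\delta_x\|_2^2=p_{2t}(x,x)$, one has $u'=-2\mathcal{E}(P_t\delta_x)$. Feeding this into Step 2 gives $u(t)\le C\,r(t)^{-d}$, where $r$ solves $r^2/\log r\asymp t$, i.e. $r^2\asymp t\log t$. Hence $p_t(x,x)\le C(t\log t)^{-d/2}$ for large $t$. For small $t$, the trivial bound $p_t\le1$ suffices. Cauchy–Schwarz, $p_t(x,y)\le\sqrt{p_t(x,x)p_t(y,y)}$, then extends the bound off the diagonal.

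\textbf{Main obstacle.} I expect Step 1 to be the hardest part, because the slab averaging must yield deviation-from-mean control inside a cube, not domain exit, and this must hold uniformly over all shells. If that adaptation becomes technical, I would fall back on citing \cite[Theorem~1.1]{MuruganSaloffCoste2015} and checking only that its hypotheses hold. The hypothesis to check is comparability of the kernel with $|z|^{-d-2}$ on $\Zd$; note this kernel's scale function is $\phi(r)=r^2/\log r$.
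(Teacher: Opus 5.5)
Your argument is correct, and it takes a genuinely different route from the paper.

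The paper proves no functional inequality at this point. It proceeds in three moves:
\begin{itemize}
\item it uniformizes the generator into the lazy discrete-time kernel $\widetilde K = I + (2\Lambda_*)^{-1}L$;
\item it applies the Murugan--Saloff-Coste theorem as a black box, plus Cauchy--Schwarz, to get $\widetilde K^n(x,y)\le C(1+n\log(2+n))^{-d/2}$;
\item it returns to continuous time through $p_t(x,y)=\E[\widetilde K^N(x,y)]$ with $N\sim\mathrm{Poisson}(2\Lambda_* t)$ and Poisson concentration.
\end{itemize}
You instead stay in continuous time and produce the input yourself: a scale-$r$ Poincar\'e inequality with constant $r^2/\log(2+r)$, then the Nash-type inequality $\|f\|_2^2\le C\frac{r^2}{\log(2+r)}\mathcal{E}(f,f)+r^{-d}\|f\|_1^2$, then the Carlen--Kusuoka--Stroock/Coulhon ODE.

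The paper's proof is three lines but rests entirely on an external theorem stated for discrete-time chains, which is why it needs the uniformization detour. Yours is self-contained and uses ellipticity only through $\mathcal{E}\ge\lambda\mathcal{E}_0$. It also shows that the logarithmic gain comes from the same shell-counting mechanism as Proposition~\ref{p.poincare}. Moreover, your Nash inequality is essentially the Faber--Krahn input the paper needs again in Proposition~\ref{p.heatkernel.offdiag}.

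Your fallback ``directly from Murugan--Saloff-Coste'' is not quite direct. Their result concerns discrete-time kernels, so you would need exactly the uniformization and Poissonization step the paper performs.

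Two smaller points.

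First, your opening line inverts the scale function. The relation $t\asymp r^2\log(2+r)$ would give $r\asymp\sqrt{t/\log t}$, not $\sqrt{t\log t}$. The correct function is $r^2/\log(2+r)$, which is the one you actually use in Steps 1 and 3.

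Second, the obstacle you anticipate in Step 1 disappears once you reduce to $\a\equiv 1$ and use translation invariance.
\begin{itemize}
\item The Fourier symbol of $\mathcal{E}_0$ on $[-\pi,\pi]^d$ satisfies $\psi(\xi)\ge c|\xi|^2\log(2+1/|\xi|)$.
\item Let $A_r$ be convolution with the normalized indicator of a cube of side $r$. Then $|1-\widehat{a}_r(\xi)|^2\le C\min(1,r^2|\xi|^2)\le C\frac{r^2}{\log(2+r)}\psi(\xi)$. The last inequality holds because $s\mapsto s^2\log(2+1/s)$ is increasing.
\item Plancherel then gives $\|f-A_rf\|_2^2\le C\frac{r^2}{\log(2+r)}\mathcal{E}_0(f,f)$.
\item In Step 2, replace the block averages by $\|A_rf\|_2^2\le\|A_rf\|_\infty\|A_rf\|_1\le r^{-d}\|f\|_1^2$.
\end{itemize}
With this, no slab iteration and no comparison with cube averages is needed.
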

	
	\begin{proof}
		The total jump rate $\Lambda_* \coloneqq \sup_{x \in \Zd}\sum_{z \in \Zd \setminus \{0\}}\a(x,x{+}z)|z|^{-(d+2)}$ is finite and depends only on $d$ and $\lambda$.
		The lazy uniformization $\widetilde K \coloneqq I + (2\Lambda_*)^{-1}L$ is a symmetric Markov kernel with $\widetilde K(x,x) \ge 1/2$ and $\widetilde K(x,y) \asymp |x-y|^{-(d+2)}$ for $x \ne y$.
		By~\cite[Theorem~1.1]{MuruganSaloffCoste2015} (applied on $\Zd$ with counting measure, volume growth $V(r) \asymp (1+r)^d$, and critical index $\beta = \gamma = 2$), together with the Cauchy--Schwarz inequality $\widetilde K^n(x,y) \le (\widetilde K^{2\lfloor n/2\rfloor}(x,x)\widetilde K^{2\lceil n/2\rceil}(y,y))^{1/2}$,
		\[
		\widetilde K^n(x,y) \le \frac{C}{\bigl(1 + n\log(2{+}n)\bigr)^{d/2}} \qquad (n \ge 0,\ x \text{ and } y \in \Zd)\,.
		\]
		Since $p_t(x,y) = \E[\widetilde K^N(x,y)]$ with $N \sim \mathrm{Poisson}(2\Lambda_* t)$, the bound~\eqref{e.hk.upper} follows by Poisson concentration.
	\end{proof}
	
	The on-diagonal bound gives no spatial decay.
	The following off-diagonal estimate is a consequence of the general framework of Chen, Kumagai, and Wang~\cite[Theorem~1.12]{ChenKumagaiWang2019}, applied with scale function $\phi(r) = r^2/\log(2{+}r)$.
	
	\begin{proposition}[Off-diagonal heat kernel bound {\cite[Theorem~1.12]{ChenKumagaiWang2019}}]
		\label{p.heatkernel.offdiag}
		Let $\lambda \in (0,1]$, let $\a = \{\a(x,y)\}$ be symmetric conductances on $\Zd$ satisfying $\lambda \le \a(x,y) \le \lambda^{-1}$, and let $p_t(x,y)$ denote the associated transition kernel.
		Then there exists $C = C(d,\lambda) < \infty$ such that for every $t > 0$ and every $x \ne y$ in $\Zd$,
		\begin{equation}
			\label{e.hk.offdiag}
			p_t(x,y) \le C\left(\frac{1}{\bigl(1 + t\log(2{+}t)\bigr)^{d/2}} \wedge \frac{t\log(2{+}|x-y|)}{|x-y|^{d+2}}\right)\,.
		\end{equation}
	\end{proposition}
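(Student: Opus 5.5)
The bound combines the on-diagonal estimate of Proposition~\ref{p.heatkernel} with a short-time, off-diagonal estimate in the framework of~\cite[Theorem~1.12]{ChenKumagaiWang2019}. The first term of the minimum in~\eqref{e.hk.offdiag} is exactly~\eqref{e.hk.upper}, so the work is entirely in the second term $t\log(2{+}|x-y|)/|x-y|^{d+2}$.

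We verify the hypotheses of that framework for the Dirichlet form
\[
\mathcal{E}(f,f)=\tfrac12\sum_{x,y}\a(x,y)|x-y|^{-(d+2)}(f(y)-f(x))^2
\]
on $\Zd$ with counting measure. The measure is volume doubling with $V(r)\asymp(1+r)^d$. The jump kernel satisfies $J_\a(x,y)\asymp 1/(V(|x-y|)\phi(|x-y|))\cdot\log(2{+}|x-y|)^{-1}$-type comparability with the scale function $\phi(r)=r^2/\log(2{+}r)$. More concretely, we record the upper jump bound $J_\a(x,y)\le \lambda^{-1}|x-y|^{-(d+2)}$ and the truncated second moment
\[
\sum_{|z|\le r}|z|^{2-d-2}\asymp \log(2{+}r),
\]
which identifies $\phi(r)=r^2/\log(2{+}r)$ as the mean exit-time scale from balls of radius $r$.

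The two inputs needed are an exit-time estimate $\E_x\tau_{B(x,r)}\asymp\phi(r)$ and a cutoff Sobolev / Faber--Krahn inequality. We obtain the lower bound on the exit time from the Poincaré-type estimate of Proposition~\ref{p.poincare} (rescaled to unit lattice), which gives Faber--Krahn at scale $\phi$. We obtain the upper bound from a Dynkin/Lyapunov argument with $f(y)=|y-x|^2\wedge r^2$, since $Lf\ge c\log(2{+}r)$ on $B(x,r)$. The cutoff Sobolev condition for pure jump forms follows from the jump upper bound together with these estimates, as noted in~\cite{ChenKumagaiWang2019}. The framework then yields the heat kernel upper bound
\[
p_t(x,y)\le C\Bigl(\frac{1}{V(\phi^{-1}(t))}\wedge \frac{t}{V(|x-y|)\phi(|x-y|)}\Bigr)
=C\Bigl(\cdots\wedge \frac{t\log(2{+}|x-y|)}{|x-y|^{d+2}}\Bigr).
\]
Since $\phi^{-1}(t)\asymp\sqrt{t\log(2{+}t)}$, the first term matches~\eqref{e.hk.upper}.

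\textbf{Fallback if the framework is awkward to invoke.} We would argue directly by Meyer decomposition. Truncate jumps of size larger than $\rho=|x-y|/(4)$, use the Davies method on the truncated process to get a Gaussian-type decay $\exp(-c|x-y|/\rho)$ times the on-diagonal bound, and add the probability of a big jump. The latter is at most $t\sum_{|z|>\rho}\lambda^{-1}|z|^{-(d+2)}$ times the transition density landing at $y$ after that jump, bounded by $C t|x-y|^{-(d+2)}$. The logarithm arises from summing, over intermediate positions, the on-diagonal bound against the jump kernel.

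\textbf{Main obstacle.} The main difficulty is verifying the exit-time comparability with the correct logarithmic scale function, since $\phi$ is not of pure power type and the framework's regularity conditions on $\phi$ (doubling with upper index $<2$... here exactly $2$ up to logs) must be checked carefully. This is where we expect the logarithmic factor $\log(2{+}|x-y|)$, rather than a sharp bound, to enter.
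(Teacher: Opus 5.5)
\textbf{There is a genuine gap.} Your framework and your scale function $\phi(r)=r^2/\log(2{+}r)$ match the paper, but the way you verify its hypotheses does not work.

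First, the Lyapunov step is false. For $y\in B(x,r)$, $L(|\cdot-x|^2\wedge r^2)(y)$ contains the second-moment term $\asymp\log(2{+}r)$. It also contains the cross term $2(y-x)\cdot\sum_{z}\a(y,y{+}z)|z|^{-(d+2)}z$. The conductances are symmetric but not balanced, so this local drift is of order one for generic configurations (e.g.\ alternating conductances along $e_1$). The cross term is therefore of order $r$ with either sign, and $Lf\ge c\log(2{+}r)$ fails.

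The same drift ruins any pointwise Dynkin argument with a cutoff: $|L\varphi|$ is only $\lesssim 1/r$, not $\lesssim\log(2{+}r)/r^2$. So you have no valid argument for the lower bound $\E_x\tau_{B(x,r)}\gtrsim\phi(r)$. The Poincar\'e inequality cannot supply it, because eigenvalue lower bounds control exit times from above, not from below.

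Second, Proposition~\ref{p.poincare} is not FK$(\phi)$. FK demands the volume-improved bound $\lambda_1(D)\ge c\,\phi(r)^{-1}(V(x,r)/|D|)^{\nu}$ for all $D\subset B(x,r)$. The paper instead obtains FK from the on-diagonal bound of Proposition~\ref{p.heatkernel} via \cite[Proposition~2.1]{ChenKumagaiWang2019}.

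Third, the cutoff Sobolev inequality does not follow from $J_{\phi,\le}$ alone. That envelope is $\log(2{+}|z|)|z|^{-(d+2)}$, and a Lipschitz cutoff then gives $\Gamma(\varphi)\lesssim(\log r)^2/r^2$, one logarithm too many. In your scheme, CSJ therefore also rests on the unproved exit-time estimate.

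\textbf{The missing idea} is to bypass exit times altogether. Use the implication (5)$\Rightarrow$(1) of \cite[Theorem~1.12]{ChenKumagaiWang2019}, whose inputs are FK$(\phi)$, $J_{\phi,\le}$ and CSJ$(\phi)$. The carr\'e du champ is quadratic in increments, so it is blind to the drift. Computing it with the \emph{actual} kernel for $\varphi(y)=(1-(|y-x_0|-R)^+/r)^+$ gives
\[
\Gamma(\varphi)(y)\le\frac{C}{r^2}\sum_{0<|z|\le r}|z|^{-d}+C\sum_{|z|>r}|z|^{-(d+2)}\le\frac{C\log(2{+}r)}{r^2}=\frac{C}{\phi(r)},
\]
so CSJ holds with $C_1=0$.

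Your fallback has the same hole. With $\rho=|x-y|/4$, the Davies factor $\exp(-c|x-y|/\rho)$ is a constant, so the small-jump part gives no spatial decay. Upgrading the Meyer/Davies bound to $t/(V\phi)$ requires an iteration driven by a survival estimate, which is exactly the exit-time lower bound you have not established. The big-jump term is fine, but it should be bounded by $t\sup_{|u-v|>\rho}\a(u,v)|u-v|^{-(d+2)}\lesssim t\rho^{-(d+2)}$, not by the total jump rate times an on-diagonal density.
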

	
	\begin{proof}
		The on-diagonal bound~\eqref{e.hk.upper} gives the first term in the minimum.
		For the spatial tail, we apply~\cite[Theorem~1.12]{ChenKumagaiWang2019} with $\phi_j = \phi_c = \phi$, where $\phi(r) \coloneqq r^2/\log(2{+}r)$.
		In our setting ($\Zd$ with counting measure, $V(x,r) \coloneqq |B(x,r) \cap \Zd| \asymp (1+r)^d$), the implication from (5) to (1) in their theorem gives the upper heat kernel estimate
		\[
		p_t(x,y) \le C\left(\frac{1}{V(x,\phi^{-1}(t))} \wedge \frac{t}{V(x,|x-y|)\phi(|x-y|)}\right)
		\]
		provided that three conditions are satisfied:
		\begin{enumerate}
			\item[\textup{(i)}] \emph{On-diagonal bound:} $p_t(x,x) \le C/V(x,\phi^{-1}(t))$ for every $t > 0$ and every $x \in \Zd$.
			\item[\textup{(ii)}] \emph{Jump kernel upper bound:} $\a(x,y)|x{-}y|^{-(d+2)} \le C/(V(x,|x-y|)\phi(|x-y|))$ for every $x \ne y$ in $\Zd$.
			\item[\textup{(iii)}] \emph{Cutoff Sobolev inequality:} for every $x_0 \in \Zd$ and every $R > 0$ and $r > 0$, there exists a cutoff function $\varphi$ for $B(x_0,R) \subset B(x_0,R{+}r)$ such that for every $f \colon \Zd \to \R$,
			\[
			\sum_{x \in \Zd}f(x)^2\Gamma(\varphi)(x) \le C_1\mathcal{E}_U(f,f) + \frac{C_2}{\phi(r)}\sum_{x \in \Zd}f(x)^2\,,
			\]
			where $\Gamma(\varphi)(x) \coloneqq \sum_{\substack{y \in \Zd \\ y \ne x}}\a(x,y)|x{-}y|^{-(d+2)}(\varphi(x) - \varphi(y))^2$ is the carr\'e du champ and $\mathcal{E}_U(f,f)$ is the Dirichlet form restricted to the annulus $U = B(x_0,R{+}r) \setminus B(x_0,R)$.
		\end{enumerate}
		Since $\phi_j = \phi_c = \phi$, \cite[Remark~1.16(ii)]{ChenKumagaiWang2019} shows that $\mathrm{UHK}(\phi,\phi)$ reduces to the pure-jump estimate $p_t(x,y) \le C(V(x,\phi^{-1}(t))^{-1} \wedge t/(V(x,|x-y|)\phi(|x-y|)))$.
		Since $\phi^{-1}(t) \asymp \sqrt{t\log(2{+}t)}$, we have $V(x,\phi^{-1}(t))^{-1} \asymp (1 + t\log(2+t))^{-d/2}$; together with $V(x,r)\phi(r) \asymp (1+r)^d r^2 / \log(2{+}r)$, this is equivalent to~\eqref{e.hk.offdiag}.
		We now verify each condition.
		
		Condition~\textup{(i)} is Proposition~\ref{p.heatkernel}; the on-diagonal bound implies FK$(\phi)$ by~\cite[Proposition~2.1]{ChenKumagaiWang2019}, since~$\Zd$ satisfies volume~doubling.
		For condition~\textup{(ii)}, since $\a(x,y) \le \lambda^{-1}$ and $\log(2{+}r) \ge 1$,
		\[
		\a(x,y)|x{-}y|^{-(d+2)} \le \frac{C\log(2{+}|x-y|)}{|x-y|^{d+2}} \le \frac{C}{V(x,|x-y|)\phi(|x-y|)}\,.
		\]
		For condition~\textup{(iii)}, take $\varphi(x) \coloneqq (1 - (|x - x_0| - R)^+/r)^+$.
		Then $|\varphi(x) - \varphi(y)| \le \min(1, |x{-}y|/r)$, so splitting the carr\'e du champ at $|z| = r$ gives
		\[
		\Gamma(\varphi)(x) \le \frac{C}{r^2}\sum_{\substack{z \in \Zd \\ 0 < |z| \le r}}|z|^{-d} + C\sum_{\substack{z \in \Zd \\ |z| > r}}|z|^{-(d+2)} \le \frac{C\log(2{+}r)}{r^2} = \frac{C}{\phi(r)}\,,
		\]
		by shell counting ($\sum_{\substack{z \in \Zd \\ 0 < |z| \le r}}|z|^{-d} \le C\log(2{+}r)$ and $\sum_{\substack{z \in \Zd \\ |z| > r}}|z|^{-(d+2)} \le Cr^{-2}$).
		The cutoff Sobolev inequality follows with $C_1 = 0$, since the carr\'e du champ is bounded pointwise by $C/\phi(r)$.
	\end{proof}
	
	\begin{remark}
		Recall that $\kappa_\eps \coloneqq \eps^d\sum_{\substack{z \in \eps\Zd \setminus \{0\} \\ |z| \le 1}}|z|^{-d}$, which by Proposition~\ref{sumesonthelattice} satisfies $\kappa_\eps \asymp 1 + |\ln\eps|$.
		For the rescaled kernel $q_t^{\eps,\omega}(x,y) = \eps^{-d}p_{t/(\kappa_\eps\eps^2)}(x/\eps, y/\eps)$, the bound~\eqref{e.hk.offdiag} gives
		\begin{equation}
			\label{e.hk.offdiag.rescaled}
			q_t^{\eps,\omega}(x,y) \le C\left(\left(\eps^2 + \frac{t}{\kappa_\eps}\log\Bigl(2 + \frac{t}{\kappa_\eps\eps^2}\Bigr)\right)^{-d/2} \wedge \frac{t}{\kappa_\eps}\frac{\log(2 + |x{-}y|/\eps)}{|x-y|^{d+2}}\right)
		\end{equation}
		for every $x \ne y$ in $\eps\Zd$ and every $t > 0$, with $C = C(d,\lambda)$ independent of $\eps$ and the conductances.
		The killed kernel satisfies the same bound: $q_{t,U}^{\eps,\omega} \le q_t^{\eps,\omega}$.
	\end{remark}

	\enlargethispage{6\baselineskip}
\end{document}